\newcolumntype{H}{>{\setbox0=\hbox\bgroup}c<{\egroup}@{}}
\newtheorem{theorem}{Theorem}[section]
\newtheorem{lemma}[theorem]{Lemma}
\newtheorem{corollary}[theorem]{Corollary}
\newtheorem{proposition}[theorem]{Proposition}
\newtheorem{step2}{Step} 
\theoremstyle{definition}
\newtheorem{definition}[theorem]{Definition}
\newtheorem{setup}[theorem]{Set-up}
\newtheorem{remark}[theorem]{Remark}
\newtheorem{example}[theorem]{Example}
\title[Mori Dream Bonds and $\C^*$-actions]{Mori Dream Bonds and $\C^*$-actions}
\author[Barban]{Lorenzo Barban}
\address{Dipartimento di Matematica, Universit\`a degli Studi di Trento, via
	Sommarive 14 I-38123 Povo di Trento (TN), Italy}
\email{lorenzo.barban@unitn.it}
\author[Romano]{Eleonora A. Romano}
\address{Dipartimento di Matematica, Universit\`a degli Studi di Genova, via Dodecaneso 35, I-16146, Genova (GE), Italy}
\email{eleonoraanna.romano@unige.it}
\author[Sol\'a Conde]{Luis E. Sol\'a Conde}
\address{Dipartimento di Matematica, Universit\`a degli Studi di Trento, via
Sommarive 14 I-38123 Povo di Trento (TN), Italy}
\email{eduardo.solaconde@unitn.it}
\author[Urbinati]{Stefano Urbinati}
\address{Universit\`a degli Studi di Udine, Dipartimento di Scienze Matematiche,
Informatiche e Fisiche, Via delle Scienze, 206 - 33100 Udine, Italy}
\email{stefano.urbinati@uniud.it}
\subjclass[2010]{Primary 14L30; Secondary 14E30, 14L24, 14M17}
\thanks{}
\newcommand\ignore[1]{}
\def\Div{\operatorname{Div}}
\def\div{\operatorname{div}}
\def\CDiv{\operatorname{CDiv}}
\DeclareMathOperator{\HH}{H}
\DeclareMathOperator{\Sym}{Sym}
\DeclareMathOperator{\Cox}{Cox}
\def\C{{\mathbb C}}
\def\N{{\mathbb N}}
\def\P{{\mathbb P}}
\def\Q{{\mathbb Q}}
\def\R{{\mathbb R}}
\def\Z{{\mathbb Z}}
\def\cC{{\mathcal C}}
\def\cN{{\mathcal{N}}}
\def\cO{{\mathcal{O}}}
\def\cS{{\mathcal S}}
\def\cY{{\mathcal Y}}
\def\Q{{\mathbb{Q}}}
\def\operatorname#1{\mathop{\rm #1}\nolimits}
\def\Proj{\operatorname{Proj}}
\def\Hom{\operatorname{Hom}}
\def\Hom{\operatorname{Hom}}
\def\Spec{\operatorname{Spec}}
\def\codim{\operatorname{codim}}
\def\NU{{\operatorname{N^1}}}
\def\GX{\mathcal{G}\!X}
\def\GZ{\mathcal{G}\!Z}
\newcommand{\st}{\operatorname{s}}
\newcommand{\pb}{\ar@{}[dr]|{\text{\pigpenfont J}}}
\def\ol{\overline}
\def\tl{\widetilde}
\newcommand{\xleftrightarrow}[2][]{\ext@arrow 3359\leftrightarrowfill@{#1}{#2}}
\newcommand{\xdasharrow}[2][->]{
\tikz[baseline=-\the\dimexpr\fontdimen22\textfont2\relax]{
\node[anchor=south,font=\scriptsize, inner ysep=1.5pt,outer xsep=2.2pt](x){#2};
\draw[shorten <=3.4pt,shorten >=3.4pt,dashed,#1](x.south west)--(x.south east);
}}
\newcommand{\hooklongrightarrow}{\lhook\joinrel\longrightarrow}
\def\Mo{\operatorname{\hspace{0cm}M}}
\def\We{\operatorname{\hspace{0cm}W}}
\newcommand{\git}{\mathbin{/\mkern-6mu/}}
\begin{document}
\begin{abstract}
	We construct a correspondence between Mori dream regions arising from small modifications of normal projective varieties and $\C^*$-actions on polarized pairs which are bordisms. Moreover, we show that the Mori dream regions constructed in this way admit a chamber decomposition on which the models are the geometric quotients of the $\C^*$-action. In addition we construct, from a given $\C^*$-action on a polarized pair for which there exist at least two admissible geometric quotients, a $\C^*$-equivariantly birational $\C^*$-variety, whose induced action is a bordism, called the pruning of the variety.
\end{abstract}
\maketitle
\tableofcontents



\section{Introduction}\label{sec:intro}

The existence of an interplay between birational geometry and geometric invariant theory is a well known fact since the 1980's. In his work \cite{ReidFlip} Reid explains how flips --fundamental birational transformations in modern birational geometry-- could be understood in terms of variation of GIT (see also \cite{hu}). On the other hand Thaddeus \cite{Thaddeus1996} showed that the set of all possible linearizations of a torus action has a natural structure of convex cone in the Neron-Severi space of the variety, divided in (convex conic) chambers on whose interior the associated GIT quotient is invariant, and described the birational transformations associated to wall-crossing in this chamber decomposition. The  construction
of the wall-crossing transformations in terms of weighted blow-ups has also appeared in \cite{brionprocesi}. Moreover, W{\l}odarczyk (see \cite{Wlodarczyk}) used these ideas to address the Weak factorization conjecture of birational transformations by means of the concept of cobordism. Finally, these ideas gave rise to the concept of Mori dream space (MDS for short), introduced by Hu and Keel (cf. \cite{HuKeel}). In a nutshell, the birational geometry of an MDS $X$ is governed by the different GIT quotients of an affine variety $\Spec(\Cox(X))$ by the action of a torus.

Recently the abovementioned relation was exploited in the classification of certain complex projective varieties admitting a $\C^*$-action of a 
simple type (cf. \cite{WORS1}), which then was used in \cite{WORS2} to prove some partial version of the LeBrun-Salamon conjecture. In the paper \cite{WORS1} the authors studied $\C^*$-action counterparts of some birational transformations, such as Atiyah flips and special Cremona transformations. These results were then generalized in \cite{WORS3} to study equivariant birational modifications of smooth projective varieties with a $\C^*$-action under some 
hypotheses. The birational geometry of these varieties is then completely determined in a precise manner by the corresponding GIT-quotients of the $\C^*$-action (see for instance \cite[Theorem 1.1]{WORS3}). The idea is the following: given a polarized pair $(X,L)$, where $X$ is a normal projective variety and $L$ is an ample line bundle on $X$, with a $\C^*$-action satisfying some hypotheses (see \cite[\S 4]{WORS3}), one gets a naturally ordered chain of birational transformations among the geometric quotients $\GX(i,i+1)$ of $X$:
$$
\xymatrix{\GX(0,1)\ar@{-->}[r]^{\psi_1}&\GX(1,2)\ar@{-->}[r]^(.50){\psi_2}&\dots\ar@{-->}[r]^(.30){\psi_{r-1}}&\GX(r-1,r),}
$$
which are flips (see Example \ref{ex:grass}, \cite[Theorem 3.1]{WORS3}). 
Moreover, given two of these geometric quotients $\GX(i,i+1)$, $\GX(j,j+1)$, $i\leq j$, one may find a variety $X'$ -- $\C^*$-equivariantly -- birationally equivalent to $X$, admitting a $\C^*$-action whose geometric quotients are $\GX(k,k+1)$, $i\leq k\leq j$, and one may obtain $X$ from $X'$ by means of a blow-up and a sequence of flips (\cite[Theorem 1.1]{WORS3}). 

With this in mind, a natural question arises: given a birational map $\phi:Y_-\dashrightarrow Y_+$ among normal projective varieties, is it  possible to find a variety $X$, endowed with a $\C^*$-action such that $Y_\pm$ are the extremal geometric quotients of $X$ and such that $\phi$ coincides with the map induced by the $\C^*$-action? If this is the case, we say that $X$ is a \emph{geometric realization} of the birational map $\phi$. If such a variety $X$ exists, we get a natural decomposition of $\phi$ in terms of simpler birational transformations, 
that, moreover, can be read out of the birational geometry of $X$. 

In this paper we address this problem, extending the results of \cite{WORS3} to broader generality. We consider a particular type of polarized pairs $(X,L)$ admitting a $\C^*$-action, called {\em bordisms}, and study how their birational geometry is ruled by the induced birational transformation between the corresponding polarized extremal fixed point components $(Y_\pm,L_\pm)$. A bordism is a projective variety admitting a $\C^*$-action whose only invariant divisors are its extremal fixed point components, which are requested to be geometric quotients of $X$. Although at a first glimpse bordisms may seem to be very particular cases of torus actions, we show that they can be constructed upon many examples of torus actions by means of a birational transformation called {\em pruning} (see Definition \ref{def:pruning}). Different prunings of the same torus action give rise to $\C^*$-equivariantly birationally equivalent varieties (see Theorem \ref{theorem:Xbordism}). A central definition in this work is the one of {\em Mori Dream Bond} ({\em MDB} for short):

\begin{definition}\label{def:MDRtypeintro}
	Let $Y_-$ be a normal projective variety, 
	$L_-,L_+\in\CDiv(Y_-)$ be two effective Cartier divisors such that $L_-$ is ample, $R(Y_-;L_+)$ is finitely generated and the natural map $\phi:Y_-\dashrightarrow Y_+:=\Proj(R(Y_-;L_+))$ is a small modification.
	\\We say that $(L_-,L_+)$ is a {\em Mori Dream Bond} on $Y_-$ if the cone $\cC\subset \CDiv(Y_-)_{\Q}$ generated by $L_-,L_+$ is a Mori Dream Region, that is the multisection ring
	$$R(Y_-; L_-,L_+)=\bigoplus_{a,b\geq 0} \HH^0(Y_-,aL_-+bL_+)$$
	is a finitely generated $\C$-algebra.
\end{definition}

This condition is essentially a local version of the MDS definition. Our main results show that essentially bordisms are torus action counterparts of small modifications defined by an MDB. More concretely we prove the following:

\begin{theorem}\label{thm:MDR2intro}
	Let $\phi:Y_-\dashrightarrow Y_+$ be a small modification defined by an MDB $(L_-,L_+)$ on a normal projective variety $Y_-$. Then there exists a normal projective variety $X$ admitting a $\C^*$-action which is a bordism and realizes geometrically the map $\phi$.
\end{theorem}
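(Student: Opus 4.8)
The plan is to construct $X$ directly from the bigraded section ring attached to $(L_-,L_+)$, following the Hu--Keel picture recalled in the introduction, which realises the birational models of a variety as the GIT quotients of (a piece of) its Cox ring. Regarding $L_+$ as a divisorial class on $Y_-$ through the small modification $\phi$ --- an isomorphism in codimension one, so that Weil divisor classes on $Y_-$ and on $Y_+$ are identified --- I would consider
\[
R\;:=\;\bigoplus_{(m,n)\in\Z_{\ge 0}^{2}}\HH^{0}\!\bigl(Y_-,\mathcal O_{Y_-}(mL_-+nL_+)\bigr),
\]
a $\Z^{2}$-graded $\C$-algebra with $R_{(0,0)}=\C$, which is finitely generated by the definition of a Mori Dream Pair (Definition~\ref{def:MDRtype}). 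I would then set $X:=\Proj R$ with respect to the coarser $\N$-grading by total degree $m+n$ --- equivalently $X=\Spec(R)\git\C^{*}$ for the $\C^{*}$ acting with weight $m+n$ on $R_{(m,n)}$. After replacing $R$ by a Veronese subalgebra one may assume it is generated in degree one; since $R$ is normal (by the defining properties of the pair, or after normalising $\Proj R$), the variety $X$ is normal projective. Since the $\Z^{2}$-grading refines the $\N$-grading, the residual torus $\C^{*}\cong(\C^{*})^{2}/\C^{*}_{\mathrm{diag}}$ acts on $X$; explicitly it acts with weight $m-n$ on the image of $R_{(m,n)}$.

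The next step is to read off the fixed-point data. The ideal of $R$ generated by the elements of positive $n$-degree is $\C^{*}$-invariant, with quotient ring $\bigoplus_{m\ge 0}\HH^{0}(Y_-,mL_-)$, the section algebra of the \emph{ample} line bundle $L_-$ on the normal variety $Y_-$; this ideal cuts out inside $X$ a closed subvariety isomorphic to $Y_-$, fixed by $\C^{*}$, which by construction is one of the two extremal fixed components. Symmetrically, the ideal generated by the elements of positive $m$-degree has quotient $\bigoplus_{n\ge 0}\HH^{0}(Y_-,nL_+)=\bigoplus_{n\ge 0}\HH^{0}(Y_+,nL_+)$ --- the equality holding because $\phi$ is an isomorphism in codimension one and $Y_+$ is normal --- whose $\Proj$ is $Y_+$, the opposite extremal fixed component. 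Moreover $\dim\Spec(R)=\dim Y_-+2$: a general $(\C^{*})^{2}$-quotient of $\Spec(R)$ is a birational model of $Y_-$, and the general orbit is two-dimensional because the semigroup of bidegrees of $R$ spans $\Q^{2}$ (it contains $(m_0,0)$ and $(0,n_0)$ for $m_0,n_0\gg 0$). Hence $\dim X=\dim Y_-+1$, so $Y_{\pm}$ are divisors in $X$, and $\mathcal O_{X}(1)$ restricts on them to a multiple of $L_-$, resp.\ of $L_+$.

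It remains to verify that $X$ satisfies the definition of a bordism and realises $\phi$ geometrically. Linearising the $\C^{*}$-action by $\mathcal O_{X}(1)$ twisted by characters, variation of GIT identifies the quotients of $X$ with the models attached to the chambers of the two-dimensional cone $\R_{\ge 0}L_-+\R_{\ge 0}L_+\subset\NU(Y_-)$: the quotient over the chamber meeting $\Nef(Y_-)$ is $Y_-$, the one over the chamber of $L_+$ is $Y_+$, and each intermediate chamber gives a model obtained from $Y_-$ by a small modification. Since $L_-$ (resp.\ $L_+$) is ample on $Y_-$ (resp.\ on $Y_+$), the two outermost chambers consist of stable points, so these two quotients are \emph{geometric}; thus $Y_{\pm}$ are the extremal geometric quotients of $X$, and the birational map $Y_-\dashrightarrow Y_+$ induced between them by the $\C^{*}$-action is the composite of the intermediate wall-crossing maps, which by the definition of the pair is precisely $\phi$. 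Finally, because $\phi$ is an isomorphism in codimension one every intermediate modification in this chain is a flip, hence every intermediate fixed component of $X$ has codimension at least two and $X$ carries no invariant divisor other than its extremal fixed components; this is exactly the bordism condition. Therefore $X$ is a bordism with polarised extremal fixed components $(Y_{\pm},L_{\pm})$ that realises $\phi$ geometrically.

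The part I expect to be the main obstacle is turning this GIT-theoretic outline into precise statements about the intrinsic $\C^{*}$-geometry of $X$: above all, proving that the two outermost GIT quotients are genuinely \emph{geometric} quotients --- this is where the ampleness of $L_{\pm}$ on $Y_{\pm}$ must really be exploited, in order to control stabilisers over the outer chambers --- and establishing the exact equivalence between ``$\phi$ is a small modification'' and the bordism condition on the invariant divisors of $X$. Around this sits the routine but unavoidable bookkeeping (normality of $X$ via a Veronese subalgebra or a normalisation; the descent of the residual $\C^{*}$-action and the identification of $Y_{\pm}$ as its extremal fixed components with the correct polarisations), most of which should follow directly from the precise formulation of a Mori Dream Pair in Definition~\ref{def:MDRtype}.
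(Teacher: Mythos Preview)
Your construction is the paper's $X^\alpha$ for the particular choice $\alpha_-=\alpha_+=1$; the paper allows an arbitrary coprime pair $(\alpha_-,\alpha_+)$ but observes (Remark~\ref{remark:birational1ps}) that all choices yield $\C^*$-equivariantly birational realizations with the same geometric quotients, so at the level of the object you agree.

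The divergence is in the verification, and the paper's key device is precisely what is missing from your outline. Rather than arguing abstractly through VGIT, the paper compares $X^\alpha$ with the $\P^1$-bundle $P^\alpha:=\P_{Y_-}(\alpha_+L_-\oplus\alpha_-L_+)$ carrying its natural fiberwise $\C^*$-action. A direct computation (Lemma~\ref{lemma:PbirationaXalpha}) shows that $R(P^\alpha;\cO_{P^{\alpha}}(1))$ is a Veronese of $A^\alpha$, so $X^\alpha$ is its $\Proj$; this gives normality of $X^\alpha$ for free and a $\C^*$-equivariant birational map $\Phi:P^\alpha\dashrightarrow X^\alpha$. Since $L_-$ is ample, the indeterminacy locus of $\Phi$ sits inside the section $s_+(Y_-)$, so $\Phi$ identifies a $\C^*$-invariant neighborhood of the sink of $X^\alpha$ with the complement of $s_+(Y_-)$ in $P^\alpha$; repeating the argument with $\P_{Y_+}(\alpha_+L_-\oplus\alpha_-L_+)$ handles the source and yields B-type (Proposition~\ref{proposition:X^alphageometricrealization}). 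For bordism (Lemma~\ref{lemma:Xalphabordism}), $\Phi$ is an isomorphism in codimension one, so any $\C^*$-invariant prime divisor on $X^\alpha$ other than $Y_\pm$ would transfer to one on $P^\alpha$---but there the only invariant divisors are the two sections.

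Your direct argument for bordism (``$\phi$ small $\Rightarrow$ each intermediate wall-crossing is a flip $\Rightarrow$ no invariant divisor other than $Y_\pm$'') is a genuine gap: neither implication follows from the MDP definition alone. Making them precise would require a chamber-decomposition theorem for the pair, which in the paper's logic is established only \emph{after} one already knows $X$ is a bordism (Section~\ref{sec:bordismtoMDR}); invoking it here would be circular. The $\P^1$-bundle comparison bypasses this entirely and simultaneously disposes of the normality and B-type issues you correctly flagged as the hard parts.
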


Let us recall that a $\C^*$-action is {\em equalized at the sink $Y_-$ and source $Y_+$} if every point, whose orbit converges either to $Y_-$ or to $Y_+$, has trivial isotropy group (cf. \S \ref{ssec:BB}). Moreover we define the {\em bandwidth} of the $\C^*$-action on $(X,L)$ as the difference $\delta=\mu_L(Y_+)-\mu_L(Y_-)$, where by $\mu_L(Y_{\pm})$ we denote the weight of the induced $\C^*$-action on any fiber of $L$ over a point of $Y_{\pm}$ (cf. \S \ref{ssec:BB}).

\begin{theorem}\label{theorem:mdrintro}
	Let $X$ be a normal $\Q$-factorial projective variety, let $(X,L)$ be a polarized pair endowed with a $\C^*$-action of bandwidth $\delta$ which is a bordism and is equalized at the sink $Y_-$ and the source $Y_+$, and let $\psi:Y_-\dashrightarrow Y_+$ be the induced birational map. Denote by $L_-=L_{|Y_-}$, and  by $L_+=L_{\mid Y_-}-\delta Y_{-\mid Y_-}$. Then $(L_-,L_+)$ is an MDB, whose associated map is $\psi$.
\end{theorem}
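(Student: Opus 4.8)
The plan is to identify the section ring $R=\bigoplus_{m\ge 0}H^0(X,mL)$, equipped with the second grading coming from the $\C^*$-action, with the multisection ring $\bigoplus_{a,b\ge 0}H^0(Y_-,aL_-+bL_+)$; once this is done, the statement follows almost formally, since $R$ is finitely generated ($L$ being ample) and its extremal models are easy to recognize. First I would normalize the $\C^*$-linearization of $L$ so that the weight on $L_{|Y_-}$ is $0$; equalization at the sink and the source then forces the weight on $L_{|Y_+}$ to be a negative integer $-\ell$. I recall from the structure of bordisms that the two Bia\l{}ynicki-Birula cells $X^-:=X^-(Y_-)$ and $X^+:=X^+(Y_+)$ are open in $X$, that $X\setminus(X^-\cup X^+)$ has codimension $\ge 2$ (this is where the defining property of a bordism enters), and that equalization makes $X^\pm\to Y_\pm$ an $\A^1$-fibration which over the complement $Y_\pm^\circ$ of a codimension $\ge 2$ subset is the total space of the reflexive line bundle $\mathcal N_\pm$ attached to the normal data of $Y_\pm$, with $\C^*$ acting on $\mathcal N_\pm$ with weight $\pm 1$. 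In particular $X^-\cap X^+$ is a $\C^*$-torsor over $Y_-^\circ\cong Y_+^\circ$ and this isomorphism is $\psi$; so $\psi$ is a small modification, $L_+$ is a well-defined Weil class on $Y_-$, and, as $Y_-$ is $\Q$-factorial, every $aL_-+bL_+$ is $\Q$-Cartier.

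Next I would run the weight-space computation. Since $X^-$ retracts onto $Y_-$, the bundle $L_{|X^-}$ is $\C^*$-equivariantly the pullback of $L_-$ with the normalized character, so
$$
H^0(X^-,mL)=\bigoplus_{k\ge 0}H^0\big(Y_-,\,mL_-+k\,c_1(\mathcal N_-^\vee)\big),
$$
the $k$-th summand being the weight $-k$ eigenspace; symmetrically, $L_{|X^+}$ is equivariantly $\pi^*L_{|Y_+}$ twisted by the weight $-\ell$, giving a similar decomposition of $H^0(X^+,mL)$ into eigenspaces of weight $-m\ell+k'$. Because $X$ is normal and $X^-\cup X^+$ omits only a codimension $\ge 2$ set, $H^0(X,mL)=H^0(X^-,mL)\cap H^0(X^+,mL)$ inside $H^0(X^-\cap X^+,mL)$, compatibly with the weight decomposition. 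Transporting the $X^+$-side description to $Y_-$ along $\psi$, and using that $L_+$ is by definition the strict transform of $L_{|Y_+}$, both sides become modules of sections on $Y_-$ of Weil divisors depending affinely on $(m,u)$; comparing them on $Y_-^\circ$ identifies the common class attached to weight $u$ with $aL_-+bL_+$, where $(a,b)$ is the unique pair of non-negative integers with $a+b=m$ determined by $u$ (and $H^0(X,mL)_u=0$ when no such pair exists). Letting $(m,u)$ vary produces the desired graded isomorphism $R\cong\bigoplus_{a,b\ge 0}H^0(Y_-,aL_-+bL_+)$.

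It then remains to read off the conclusion. The ring on the right is finitely generated because $L$ is ample. Its slice $b=0$ is $R(Y_-,L_-)$, with $\Proj$ equal to $Y_-$ since $L_-=L_{|Y_-}$ is ample; its slice $a=0$ is $R(Y_-,L_+)$, which $\psi$ identifies with $R(Y_+,L_{|Y_+})$, whose $\Proj$ is $Y_+$ since $L_{|Y_+}$ is ample; and the birational map between these two $\Proj$'s induced by the common multigraded ring is the identity on $Y_-^\circ=Y_+^\circ$, hence equals $\psi$. The remaining requirements in Definition~\ref{def:MDRtype} (ampleness of $L_-$; $\Q$-factoriality of the intermediate chamber models, which are the geometric quotients $\GX(i,i+1)$; the chamber decomposition being that of the $\C^*$-action) follow from $\Q$-factoriality of $X$ and the bordism structure recalled above. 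Hence $(L_-,L_+)$ is an MDP whose associated map is $\psi$.

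The technical heart, and the step I expect to be the main obstacle, is the weight-space computation in the merely $\Q$-factorial setting: one must control the equivariant structure of $L$ on the two Bia\l{}ynicki-Birula cells, compute the push-forwards $\pi^\pm_*\mathcal O_{X^\pm}$ when the cells are only reflexive-line-bundle-like (so the Rees-type decomposition must be taken with reflexive hulls), match the two one-sided descriptions on $Y_-^\circ$ and recognize the outcome as precisely $aL_-+bL_+$ for the given $L_+$, and check that the semigroup of pairs $(a,b)$ that actually occur fills the whole quadrant $\Z_{\ge0}^2$ --- this last point being exactly where equalization at the sink and the source is needed. By contrast, finite generation and the identification of the extremal models are purely formal once the ring isomorphism is established.
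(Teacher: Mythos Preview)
Your overall strategy---identify the bigraded ring $R=\bigoplus_{m\ge 0}\HH^0(X,mL)$ with a multisection ring on $Y_-$ via the weight decomposition, then use ampleness of $L$---is exactly the paper's approach. The weight-space computation you outline leads to the same formula the paper proves as Lemma~\ref{lemma:sectionsink}, namely $\HH^0(X,mL)_k\simeq \HH^0(Y_-,mL_--kY_{-|Y_-})$, though the paper establishes it by passing to an auxiliary $\P^1$-bundle $\hat X=X^{\st}(0,1)\times^{\C^*}\P^1$ rather than via the Mayer--Vietoris argument on $X^-\cup X^+$.

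There is, however, a genuine gap in your final identification. You claim that the weight-$u$ piece is $\HH^0(Y_-,aL_-+bL_+)$ for integers $a+b=m$, and that $\HH^0(X,mL)_u=0$ otherwise; you attribute this to equalization at the extremal components. This is false whenever the bandwidth $\delta>1$ (as in the Grassmannian example in the paper, where $\delta=k$): equalization at $Y_\pm$ forces the normal weight to be $\pm 1$, but does \emph{not} force $\delta=1$. Since $L_+=L_--\delta Y_{-|Y_-}$, one has $aL_-+bL_+=(a+b)L_--b\delta Y_{-|Y_-}$, so only weights divisible by $\delta$ correspond to integer pairs $(a,b)$. For the remaining weights $k$ the space $\HH^0(X,mL)_k\simeq \HH^0(Y_-,mL_--kY_{-|Y_-})$ is generically nonzero. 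Thus $R(X;L)$ is strictly \emph{larger} than $R(Y_-;L_-,L_+)$: it is the section ring over a monoid $S\supsetneq\N^2$ generated by $L_-$ and $-Y_{-|Y_-}$ rather than by $L_-$ and $L_+$.

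The missing step is therefore to descend finite generation from $S$ to the sublattice $\N^2$. The paper does this by invoking \cite[Lemma~2.25]{cascinilazic} (equivalently \cite[Propositions 1.2.2, 1.2.4]{adhl}), which says that finite generation of a multisection ring over a rational polyhedral cone passes to subcones. Once you insert this, the rest of your argument goes through.
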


Moreover, in the spirit of \cite{KKL}, we show the existence of a chamber decomposition of an MDB; we refer to Definition \ref{definition:chamber} for the notion of chamber model.

\begin{theorem}\label{theorem:decompositionintro}
	In the situation of Theorem \ref{theorem:mdrintro}, the rational polyhedral cone $\cC=\langle L_-,L_+ \rangle\subset \CDiv(Y_-)_{\Q}$  admits a subdivision $\cC=\bigcup_{i=0}^{r-1}\cC_i$, where every $\cC_i=\langle L_{|Y_-}-a_iY_{-|Y_-},L_{|Y_-}-a_{i+1}Y_{-|Y_-} \rangle$ is a chamber and $a_0,\ldots,a_r$ are the critical values of the $\C^*$-action on $(X,L)$. Moreover, for every $i$ the chamber model of $\cC_i$ is $\GX(i,i+1)$.
\end{theorem}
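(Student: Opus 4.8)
The plan is to transport the GIT picture of the $\C^*$-action on $X$ to $\NU(Y_-)$ via the identification provided by Theorem \ref{theorem:mdrintro}, and to match the GIT walls (the critical values) with the walls of the claimed subdivision. First I would normalize the linearization of $L$ by a character of $\C^*$ so that the critical value at the sink is $a_0=0$; then $L_{|Y_-}-a_0Y_{-|Y_-}=L_-$, the critical values are $0=a_0<\dots<a_r$ with $a_r$ attained at the source, and (as in the proof of Theorem \ref{theorem:mdrintro}) $Y_-=\GX(0,1)$, $Y_+=\GX(r-1,r)$ are the extremal geometric quotients, each $\psi_i\colon\GX(i,i+1)\dashrightarrow\GX(i+1,i+2)$ is a small modification (cf. \cite[Theorem 3.1]{WORS3}), every $\GX(i,i+1)$ is a small $\Q$-factorial modification of $Y_-$ so that $\NU(\GX(i,i+1))\cong\NU(Y_-)$, and under this identification $L_+=L_{|Y_-}-a_rY_{-|Y_-}$. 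In particular $\cC=\langle L_-,L_+\rangle=\langle L_{|Y_-}-a_0Y_{-|Y_-},\,L_{|Y_-}-a_rY_{-|Y_-}\rangle$, so the telescoping identity $\bigcup_{i=0}^{r-1}\cC_i=\cC$ is immediate once the $\cC_i$ are recognized as chambers.

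The heart of the argument will be the identification of the GIT polarizations. For $\theta\in(a_i,a_{i+1})$ the quotient $\GX(i,i+1)$ is the GIT quotient of $X$ for the linearization of $L$ twisted by the character of weight $-\theta$, and it carries a canonical ample $\Q$-divisor $\cL_\theta$ obtained by descent of that twisted bundle. I claim that, under $\NU(\GX(i,i+1))\cong\NU(Y_-)$, one has $\cL_\theta\equiv L_{|Y_-}-\theta\,Y_{-|Y_-}$ for every $i$ and every $\theta\in(a_i,a_{i+1})$. For $i=0$ this is a direct computation: the semistable locus for $\theta\in(0,a_1)$ is the complement of the zero section in the total space of $N_{Y_-/X}=\cO_X(Y_-)_{|Y_-}$, on which $\C^*$ scales the fibres with weight $1$ by equalization at the sink, so the quotient map is the $\C^*$-torsor of $Y_{-|Y_-}$; descending the $\theta$-twisted $L$, which has weight $0$ along $Y_-$ after our normalization, along this torsor yields $L_{|Y_-}-\theta\,Y_{-|Y_-}$. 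For general $i$ I would argue by comparison: the difference of the twisted bundles for two parameters $\theta,\theta'$ is a character of $\C^*$, whose descent is a divisor class on the common open subset of the relevant quotients that does not depend on which quotient is used and equals $-Y_{-|Y_-}$ by the case $i=0$; since the $\psi_i$ are isomorphisms in codimension one this identity persists on every $\GX(i,i+1)$, and together with the fact that the untwisted $L$ descends to $L_{|Y_-}$ it gives $\cL_\theta\equiv L_{|Y_-}-\theta\,Y_{-|Y_-}$.

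Granting this, the chamber structure follows. Because each $\cL_\theta$ is a GIT polarization it is ample on $\GX(i,i+1)$, so the relative interior of $\cC_i=\langle L_{|Y_-}-a_iY_{-|Y_-},\,L_{|Y_-}-a_{i+1}Y_{-|Y_-}\rangle$ is contained in the ample cone of $\GX(i,i+1)$. At $\theta=a_i$ and $\theta=a_{i+1}$ the GIT quotient jumps, the BB cells of a fixed component of that critical value entering the semistable locus, and $\cL_{a_i}$, $\cL_{a_{i+1}}$ descend from the respective wall quotients $X\git_{a_i}\C^*$, $X\git_{a_{i+1}}\C^*$, hence are nef but not ample on $\GX(i,i+1)$, with non-ample locus the indeterminacy locus of $\psi_i$, $\psi_{i+1}$ — this is where the wall-crossing description of bordisms from earlier in the paper (and \cite[Theorem 3.1]{WORS3}) is used. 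Therefore the rays $\langle L_{|Y_-}-a_iY_{-|Y_-}\rangle$, $\langle L_{|Y_-}-a_{i+1}Y_{-|Y_-}\rangle$ lie on the boundary of $\Nef(\GX(i,i+1))$, so the intersection of this cone with the plane $\langle L_{|Y_-},Y_{-|Y_-}\rangle$, being convex and containing the interior segment between those rays, is exactly $\cC_i$. Consequently $\cC_i$ is a chamber whose chamber model (in the sense of Definition \ref{def:invariantchamber}) is $\GX(i,i+1)$, uniqueness of the $\Q$-factorial model on which the interior divisors are ample being guaranteed by the MDP structure of Theorem \ref{theorem:mdrintro}; it is $\C^*$-invariant since $\GX(i,i+1)$ is by construction a $\C^*$-quotient of $X$. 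Finally, consecutive chambers meet only along the wall $\langle L_{|Y_-}-a_{i+1}Y_{-|Y_-}\rangle$, across which the induced modification is $\psi_{i+1}$, so $\cC=\bigcup_{i=0}^{r-1}\cC_i$ is the asserted subdivision.

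The step I expect to be the main obstacle is the identity $\cL_\theta\equiv L_{|Y_-}-\theta\,Y_{-|Y_-}$: it demands a careful tracking of $\C^*$-linearizations and characters through the torsor over $Y_-$ — in particular of the equalization hypothesis at the sink, which is exactly what makes that torsor the one associated with $Y_{-|Y_-}$ (with no multiplicity) — and the propagation of the computation from the extremal quotient $\GX(0,1)=Y_-$ to the inner quotients along the small modifications $\psi_i$. The remaining verification, that the descended polarization ceases to be ample precisely at the critical values, is what pins the walls of the subdivision to the $a_i$ and should follow from the bordism wall-crossing analysis already established.
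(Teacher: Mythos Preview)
Your strategy is sound, but it takes a longer and more geometric route than the paper's. The paper's proof is essentially two lines once Lemma~\ref{lemma:sectionsink} is available: for $D=\beta(L_{|Y_-}-a_iY_{-|Y_-})+\gamma(L_{|Y_-}-a_{i+1}Y_{-|Y_-})$ in the interior of $\cC_i$, that lemma gives $\HH^0(Y_-,mD)\simeq\HH^0\big(X,m(\beta+\gamma)L\big)_{m(\beta a_i+\gamma a_{i+1})}$, and since $\beta a_i+\gamma a_{i+1}\in\big((\beta+\gamma)a_i,(\beta+\gamma)a_{i+1}\big)$, Remark~\ref{remark:projquotients} immediately yields $\Proj R(Y_-;D)\simeq\GX(i,i+1)$. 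No descent of polarizations, no analysis of ample cones, and no wall-crossing argument is needed: the chamber property follows directly from the identification of section rings. Your central identity $\cL_\theta\equiv L_{|Y_-}-\theta Y_{-|Y_-}$ is precisely the geometric avatar of Lemma~\ref{lemma:sectionsink}, so you are in effect re-deriving that lemma by descent along the $\C^*$-torsor and then adding an extra layer (ampleness on $\GX(i,i+1)$, nefness at the walls) that the paper bypasses. Your route buys a more vivid geometric picture of why the walls sit at the critical values, but at the cost of several delicate steps --- your description of $X^{\st}(0,1)$ as ``the complement of the zero section in the total space of $N_{Y_-/X}$'' is only correct as an isomorphism of $\C^*$-torsors over $Y_-$, not literally as subvarieties, and the propagation to inner quotients needs the $\P^1$-bundle model $\hat X$ that the paper builds inside the proof of Lemma~\ref{lemma:sectionsink}. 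One terminological slip: in Definition~\ref{def:invariantchamber} ``$\C^*$-invariant chamber'' is simply the name given to a cone on whose interior $\Proj R(Y_-;D)$ is constant; there is no separate $\C^*$-invariance condition to verify, so your justification ``since $\GX(i,i+1)$ is by construction a $\C^*$-quotient of $X$'' is addressing a nonexistent requirement.
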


We conclude by constructing a natural geometric realization of the standard cubo-cubic Cremona involution of $\P^3$. We then study its possible prunings, and realize one of them as a geometric realization of an MDB constructed upon the blow-up of $\P^3$ along the $4$ coordinates points.

\noindent\medskip\par{\bf Outline.} After introducing some background material about $\C^*$-actions, bordisms, and MDBs (Section \ref{sec:prelim}), we describe the pruning construction in Section \ref{sec:constructionbordism}
. Section \ref{sec:MDR2} is devoted to the proof of Theorem \ref{thm:MDR2intro} (cf. Theorem \ref{thm:MDR2}); in Section \ref{sec:bordismtoMDR} we prove Theorems \ref{theorem:mdrintro} and \ref{theorem:decompositionintro} (see Theorems \ref{theorem:mdr} and \ref{theorem:decomposition} respectively). In Section \ref{sec:Cremona} we study an explicit example of geometric realization obtained by the factorization of the standard Cremona involution of $\P^3$.

\noindent\medskip\par{\bf Acknowledgements.} We would like thank Jarek Wi\'sniewski for helpful discussions at the early stages of this work. We also thank Alex K{\"u}ronya, Friedrich Knop and Michel Brion for fruitful conversations. This project was also carried out during two stays of the first author both at the University of Genova and at the University of Udine; he would like to thank the Departments of Mathematics for the kind hospitality and helpful discussions.


\section{Preliminaries}\label{sec:prelim}

\subsection{Notation}

We work over the field $\C$ of complex numbers. Given a normal projective variety $Y$, we denote by $\Div(Y)$ (resp. $\CDiv(Y)$) the group of Weil (resp. Cartier) divisors on $Y$, and by $\Div(Y)_{\Q}$ (resp. $\CDiv(Y)_{\Q}$) the associated vector space with rational coefficients. 
A \emph{small modification} is a birational map $\phi:Y\dashrightarrow Y'$ among normal projective varieties which is an isomorphism in codimension 1. Abusing notation, the strict transform of a Weil divisor $D$ in $Y'$ will still be denoted by $D$. If in addition $Y$ and $Y'$ are $\Q$-factorial we call $\phi$ a \emph{small $\Q$-factorial modification} (SQM for short). In this case the strict transform via $\phi$ allows us to identify $\CDiv(Y)_{\Q}$ with $\CDiv(Y')_{\Q}$.  

By a \emph{polarized pair} we mean a pair $(Y,L)$, where $Y$ is a normal projective variety and $L$ is an ample line bundle. 

\subsection{Mori dream regions and bonds}\label{ssec:MDR}

Given a finite set of effective Cartier divisors $L_i$, for $i=1,\dots,k$, on a normal projective variety $Y$, we define the \emph{section ring} $R(Y;L_1,\dots,L_k)$ as the multigraded $\C$-algebra
$$
R(Y;L_1,\dots,L_k):=\bigoplus_{m_1,\ldots,m_k\in \N} \HH^0(Y,\cO_Y(m_1L_1+\ldots+m_kL_k)).
$$
Note that the product in $R(Y;L_1,\dots,L_k)$ is induced by the inclusions of each $\HH^0(Y,\cO_Y(m_1L_1+\ldots+m_kL_k))$ into the field of rational functions of $Y$, and so it depends on the particular choice of the effective Cartier divisors $L_i$ (not only on their linear equivalence class, cf. \cite[Remark, p. 341]{HuKeel}). 

In the case $k=1$, the finite generation of section rings is a classical problem in birational geometry. If $\{L_1,\dots, L_k\}$ is a set of generators of the effective cone of $Y$ (in the case such a set exists), the finite generation of the section ring --which is then called the \emph{Cox ring} of $Y$ -- is equivalent to the birational geometric concept of Mori dream space. In the spirit of \cite[Theorem 2.13]{HuKeel} we introduce the following:

\begin{definition}
	Let $Y$ be a normal projective variety, and let $\cC=\langle L_1,\ldots, L_k\rangle$ be a rational polyhedral cone in $\CDiv(Y)_{\Q}$, where $L_i$ are effective. The cone $\cC$ is a \emph{Mori dream region} --MDR for short-- if the multisection ring 
	$R(Y;L_1,\ldots,L_k)$ is a finitely generated $\C$-algebra.
\end{definition} 

\begin{remark}\label{remark:dicussionMDR}
	Note that the meaning of the term Mori Dream Region is not unique in the literature. The one we give and will need for this work is essentially similar to the one introduced in \cite[Theorem 2.13]{HuKeel}. We refer to \cite[\S 9.2]{okawa}, \cite[Theorem 4.2]{KKL} and \cite[\S 5]{KU} for a complete picture of the different properties required for MDRs. 
\end{remark}

We introduce a special type of MDR that we will be most interested in:

\begin{definition}\label{def:MDRtype}
Let $Y_-$ be a normal projective variety,  and let
$L_-,L_+\in\CDiv(Y_-)$ be two effective Cartier divisors such that:
\begin{itemize}
\item $L_-$ is ample,
\item $R(Y_-;L_+)$ is finitely generated and the natural map $\phi:Y_-\dashrightarrow Y_+:=\Proj(R(Y_-;L_+))$ is a small modification.
\end{itemize}
We say that $(L_-,L_+)$ is a {\em Mori Dream Bond} ({\em MDB}, for short) on $Y_-$ if the cone $\cC$ generated by $L_-,L_+$ is an MDR.
\end{definition}

\subsection{Bia\l ynicki-Birula decomposition, criticality}\label{ssec:BB}

In this section, we collect some preliminaries on $\C^*$-actions on polarized pairs $(X,L)$.

Given such a pair, it is known (see \cite[Proposition 2.4]{KKLV} and the subsequent Remark) that the action of $\C^*$ on $X$ admits a linearization on the line bundle $L$. 
In particular, denoting by $\cY$ the set of connected components of the fixed point locus, we may assign to every $Y\in \cY$ a weight $\mu_L(Y)\in\Mo(\C^*):=\Hom(\C^*,\C^*)$, which indicates the weight of the restriction of the action of $\C^*$ to any fiber of $L$ over a point of $Y$. As usual we will identify $\Mo(\C^*)$ with $\Z$ and think of $\mu_L(Y)$ as integers, and we will call them the \emph{critical values} of the action. Notice that a linearization of $L$
induces a linearization of $mL$, for $m\in \Z_{>0}$, such that $\mu_{mL}(Y) = m \mu_L(Y)$, for any $Y\in \cY$.

Considering the weights $\mu_L(Y)$, with $Y\in\cY$, in an increasing order, we obtain a chain of the form
$$a_0< a_1 < \ldots < a_r.$$
We define the \emph{bandwidth} $\delta$ of the $\C^*$-action as the difference $a_r-a_0$, and define the \emph{criticality} of the action as the integer $r$. Moreover, we set $$Y_i:=\bigsqcup_{Y\in \cY,\mu_{L}(Y)=a_i} Y.$$

For every $Y\in \cY$ we denote by 
\begin{equation*}\label{eq:BBcells}
	X^+(Y):=\{x\in X\mid \lim_{t\to 0} tx\in Y\}, \hspace{0.3cm}
	X^-(Y):=\{x\in X\mid \lim_{t\to \infty} tx\in Y\}
\end{equation*}
the {\it Bia{\l}ynicki-Birula cells} of the action. 

We are not assuming the smoothness of $X$, therefore we cannot claim that the natural maps $X^\pm(Y)\to Y$ are affine bundles, as it would follow by the Bia\l ynicki-Birula theorem (see for instance \cite[Theorem 2.4]{WORS1}). However, being $X$ normal and projective and using \cite[Theorem 1]{sumihiro} we may still claim that the varieties $X^\pm(Y)$ are locally closed and, in particular, that there exists a unique fixed component $Y_+\in \cY$ (resp. $Y_-\in \cY$) such that $X^+(Y_+)$ (resp. $X^-(Y_-)$) is a dense open set in $X$. We will call $Y_+$ and $Y_-$ respectively the \textit{source} and the \textit{sink} of the action, and refer to them as the \textit{extremal fixed components}; the rest of fixed point components are called {\em inner}, and their set will be denoted by $\cY^\circ$. 
Note that $Y_\pm$ can also be described as the fixed point components where $\mu_L$ acquires its extremal values $a_0,a_r$ (see for instance \cite[Remark 2.12]{WORS1}). 

Moreover we say that a $\C^*$-action is \emph{equalized} at $Y\in \cY$ if for every point $p\in (X^-(Y)\cup X^+(Y))\setminus Y$ the isotropy group of the $\C^*$-action at $p$ is trivial. More generally, a $\C^*$-action is equalized if it is equalized at every component $Y\in \cY$. 
In the case in which $X$ is smooth, this is equivalent to say that the induced action on the normal bundle of every fixed point component has weights $\pm 1$ (see \cite[Lemma 2.1]{WORS3}).

\subsection{GIT-quotients of $\C^*$-actions and admissible quotients}\label{ssec:BBS}

Throughout this section we will assume that $(X,L)$ is a polarized pair together with an action of $\C^*$ of criticality $r$, and we will use for it the notation introduced in Section \ref{ssec:BB}. We will briefly recall the construction of geometric quotients of an action of $\C^*$ on a polarized pair $(X,L)$ (we refer to \cite[\S 2.2]{WORS3} and references therein for a detailed explanation of this construction).

The point of view of \cite[\S 2.2]{WORS3} was based on \cite{BBS}, on which the authors show how geometric quotients of $X$ are described in terms of two special types of partitions of $\cY$, called \emph{sections}. Here we will be interested in a particular example of those quotients, which are defined as the projective spectra of certain subalgebras of $R(X;L)$. 

\begin{definition}\label{def:section}
	For any index $i=0,\ldots,r-1$, the partition 
	$$
	\cY=\underbrace{\{Y\in\cY\mid \mu_L(Y)\leq a_i\}}_{\cY_-}\sqcup \underbrace{\{Y\in\cY\mid \mu_L(Y)\geq a_{i+1}\}}_{\cY_+}
	$$
	is a section in the sense of \cite[Definition 1.2]{BBS}, determining an open subset:
	$$
	X^{\st}(i,i+1):=X\setminus (\bigsqcup_{Y\in \cY_-} X^+(Y)\sqcup \bigsqcup_{Y\in \cY_+} X^-(Y)),
	$$
	which is the set of stable points of a certain linearization of the $\C^*$-action on $L$. Therefore there exists a geometric quotient 
	$\GX(i,i+1):=X^{\st}(i,i+1)\git \C^*$. We will call $\GX(i,i+1)$ the $i$-th geometric quotient of the polarized pair $(X,L)$. We will refer to $\GX(0,1), \GX(r-1,r)$ as the \emph{extremal geometric quotients} of $(X,L)$, while the others will be called \emph{inner geometric quotients}.
\end{definition}	
			
	The following remark explains the relation among the GIT-quotients introduced above and the $\C$-algebra $R(X;L)$.

\begin{remark}\label{remark:projquotients}
We recall that the geometric quotients can be described as the projective spectra of some finitely generated $\C$-subalgebras of $R(X;L)$. Indeed, following \cite[Proposition 2.11]{WORS3} and \cite[Amplification 1.11, p. 40]{MFK}, choose a rational $\tau \in (a_i,a_{i+1})\cap \Q$. Then
	$$\GX(i,i+1)=\Proj \bigoplus_{m\geq 0, \\ m\tau\in\Z} \HH^0(X,mL)_{m\tau},$$
where by $\HH^0(X,mL)_{m\tau}$ we denote the direct summand of $\HH^0(X,mL)$ with weight associated to the $\C^*$-action equal to $m\tau$.
\end{remark}

\begin{definition}\label{def:semigeometricqquotient}
	For every $i=0,\ldots,r$, we can define a \emph{semigeometric quotient} $\GX(i,i)$ as the projective spectrum of the finitely generated $\C$-algebra
	$$\GX(i,i):=\Proj \bigoplus_{m\geq 0} \HH^0(X,mL)_{ma_i}.$$
	A semigeometric quotient may also be defined in terms of partitions, see for instance \cite[Proposition 2.0]{WORS3}; this presentation allows us to define, for every $i$, natural surjective morphisms:
	$$
	\xymatrix{\GX(i-1,i)\ar[rd]&&\GX(i,i+1)\ar[ld]\\&\GX(i,i)&}
	$$
	The case of our interest will be the \emph{extremal semigeometric quotients}, namely $\GX(0,0)$ and $\GX(r,r)$. There are bijective morphisms $\GX(0,0)\to Y_-$, $\GX(r,r)\to Y_+$.
\end{definition}

\begin{remark}\label{remark:definitionpsi}
	Since the intersection $\bigcap_{i=0}^{r-1} X^s(i,i+1)$ is open and non-empty, we obtain a birational map among the geometric quotients	
	$$\psi: \GX(0,1)\dashrightarrow \GX(1,2) \dashrightarrow \ldots \dashrightarrow \GX(r-1,r).$$
	The map $\psi$ is called the \emph{birational map associated to the $\C^*$-action on $(X,L)$}.
\end{remark}

\begin{definition}\label{def:btype}
	Let $(X,L)$ be a polarized pair endowed with a $\C^*$-action of criticality $r$. The action is of \emph{B-type} if the compositions $\GX(0,1)\to \GX(0,0)\to Y_-$, $\GX(r-1,r)\to \GX(r,r) \to Y_+$ are isomorphisms.
\end{definition}

\begin{remark}
The above definition tells us that $1$-dimensional orbits having a limiting point $x_\pm\in Y_\pm$ are uniquely determined by $x_\pm$, and that these two correspondences are given by isomorphisms $\GX(0,1)\to Y_-$, $\GX(r-1,r) \to Y_+$. If the variety $X$ is smooth, the Bia{\l}ynicki-Birula Theorem implies that the action is of B-type if and only if the sink and the source are subvarieties of codimension $1$, and this was the original definition of B-type action, as presented in \cite[Definition~3.1]{WORS1}. Note that in this case, the birational map $\psi$ coincides with the one introduced in \cite[Lemma 3.4]{WORS1}, which associates to every point $p\in Y_-$ the limit, for $t\to 0$, of the unique orbit having $p$ as limit for $t\to\infty$.
\end{remark}

Our next goal is to study when the birational map $\psi$ is a small modification; to this end, we introduce a new class of geometric quotients, called \emph{admissible}.

\begin{definition}
	A geometric quotient $\GX(i,i+1)$ is \emph{admissible} if $X\setminus (X^{s}(i,i+1)\cup Y_{\pm})$ does not contain codimension one subvarieties. 
\end{definition}

\begin{definition}\label{def:bordism}
	A $\C^*$-action is a \emph{bordism} if it is of $B$-type and, for every inner component $Y$, the closure of the Bia\l ynicki-Birula cells $\overline{X^{\pm}(Y)}$ does not contain codimension one subvarieties.
\end{definition}

Let us notice that Definitions \ref{def:btype}, \ref{def:bordism} generalize the ones given in \cite[Definition 3.1, Definition 3.8]{WORS1} in the smooth case.

\begin{example}\label{ex:grass}
A simple example of bordism can be constructed as follows (we refer the interested reader to \cite[\S 5.4]{WORS3}, \cite{FSC} for details). Let $V_-,V_+$ be two complex vector spaces of dimension $n$, $k\in\{2,\dots,n-1\}$, and consider the $\C^*$-action on the Grassmannian $G(k,V)$ of $k$-dimensional vector subspaces of $V:=V_-\oplus V_+$ induced by the linear action on $V$ whose weights are $0$ on $V_-$, and $1$ on $V_+$. The sink and the source of this action are, respectively, the Grassmannians $G(k,V_-)$, $G(k,V_+)$, and the inner fixed point components of the action take the form $Y_i:=G(k-i,V_-)\times G(i,V_+)$, $i=1,\dots,k-1$. The $\C^*$-action extends to the blow-up $X$ of $G(k,V)$ along $G(k,V_-)\cup G(k,V_+)$ and, following \cite{FSC} the extremal fixed point components of this action (which are the first and last geometric quotients) are, respectively,
$$
\GX(0,1)=\P_{G(k,V_-)}(\cS_-^\vee\otimes V_+),\quad \GX(k-1,k)=\P_{G(k,V_+)}(V_-\otimes \cS_+^\vee),
$$ 
where $\cS_\pm\subset V_\pm\otimes \cO_{G(k,V_\pm)}$ denotes the universal (rank $k$) subbundle of $G(k,V_\pm)$. This action is an equalized bordism, inducing a small birational transformation: 
$$
\xymatrix{\GX(0,1)
\ar@{-->}[rr]^(.45){\psi}&&
\GX(k-1,k)
.}
$$ 
Following \cite{WORS3} $\psi$ is the composition of $(k-1)$ small modifications:
$$
\xymatrix{\GX(0,1)\ar@{-->}[r]^{\psi_1}&\GX(1,2)\ar@{-->}[r]^(.50){\psi_2}&\dots\ar@{-->}[r]^(.30){\psi_{k-1}}&\GX(k-1,k).}
%
$$
In this particular example all the geometric quotients $\GX(i,i+1)$ are smooth varieties (cf. \cite[Lemma~2.14]{WORS3}), and the maps $\psi_i$ are Atiyah flips. Each $\psi_i$ has as indeterminacy locus a projective bundle $\P(\cN^\vee_+)$ over $Y_i$ into $\GX(i-1,i)$, and substitutes it by another projective bundle $\P(\cN^\vee_-)$ over $Y_i$ into $\GX(i,i+1)$:
$$
\xymatrix@R=6mm{\GX(i-1,i)\ar@{-->}[rr]^{\psi_i}&&\GX(i,i+1)\\
\P(\cN^\vee_+)\ar[dr]\ar@{^(->}[u]&&\P(\cN^\vee_-)\ar[dl]\ar@{_(->}[u]\\
&Y_i&}
$$
The vector bundles $\cN_\pm$ are precisely the $\C^*$-eigenspaces of the normal bundle of $Y_i$ in $G(k,V)$ of weights $\pm 1$, respectively.
\end{example}

\begin{remark}\label{rem:bordismsmallmodification}
	Given a $\C^*$-action on $(X,L)$ which is a bordism, then the natural birational map $\psi:\GX(0,1)\dashrightarrow \GX(r-1,r)$ is a small modification.
\end{remark}

\begin{remark}\label{remark:bordismiffadmissible}
	Given a $\C^*$-action on $(X,L)$, every geometric quotient is admissible if and only if for every component $Y\in \cY^\circ$ it holds $\codim \overline{X^{\pm}(Y)}\geq 2$.
\end{remark}

\begin{lemma}\label{lemma:admissibleinterval}
	If $\GX(i,i+1)$ is not admissible, then either every $\GX(k,k+1)$, for $k< i$, or every $\GX(m,m+1)$, for $m>i$, is not admissible.
\end{lemma}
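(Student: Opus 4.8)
The plan is to localize an obstruction to admissibility inside the closure of a single Bia\l ynicki--Birula cell, and then to observe that this cell keeps obstructing admissibility for all indices on one fixed side of $i$. The combinatorial input is the description of the unstable locus from Definition~\ref{def:section}:
\[
X\setminus X^{\st}(i,i+1)=\bigcup_{\mu_L(Y)\le a_i}X^+(Y)\ \cup\ \bigcup_{\mu_L(Y)\ge a_{i+1}}X^-(Y),
\]
a union of finitely many subsets indexed by $\cY$.

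Suppose $\GX(i,i+1)$ is not admissible; after replacing the obstructing subvariety by the closure in $X$ of one of its codimension-one irreducible components, we may assume there is an irreducible codimension-one subvariety $V\subseteq X$ with $V\not\subseteq Y_-\cup Y_+$ which meets $X\setminus X^{\st}(i,i+1)$ in a dense subset of $V$. Since $V$ is irreducible and $X\setminus X^{\st}(i,i+1)$ is a finite union of the subsets above, $V$ meets one of them, say $S$, in a dense subset of $V$. Writing $S=X^{+}(Y_0)$ with $\mu_L(Y_0)\le a_i$ in the first alternative and $S=X^{-}(Y_0)$ with $\mu_L(Y_0)\ge a_{i+1}$ in the second, it suffices to treat the first alternative, the second being entirely analogous after reversing the $\C^*$-action (which exchanges $X^+$ with $X^-$, the source with the sink, and the two sides of $i$).

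So assume $V\cap X^{+}(Y_0)$ is dense in $V$ with $\mu_L(Y_0)\le a_i$. Because the quotients $\GX(j,j+1)$ are indexed by $0\le j\le r-1$, we have $\mu_L(Y_0)\le a_i<a_r=\mu_L(Y_+)$, so $Y_0\neq Y_+$; a point of $Y_+$ is fixed and hence has $t\to 0$ limit lying in $Y_+$, which is disjoint from $Y_0$ (the two carry different values of $\mu_L$), so $X^{+}(Y_0)\cap Y_+=\emptyset$. Also $V\not\subseteq Y_-$ (since $V\not\subseteq Y_-\cup Y_+$), so $V\setminus Y_-$ is dense in $V$; therefore $U:=\bigl(V\cap X^{+}(Y_0)\bigr)\setminus Y_-$ is dense in $V$ and disjoint from $Y_-\cup Y_+$.

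Now fix any $m$ with $i<m\le r-1$. Then $\mu_L(Y_0)\le a_i\le a_m$, so $X^{+}(Y_0)$ appears in the decomposition of $X\setminus X^{\st}(m,m+1)$; thus $U\subseteq X^{+}(Y_0)\subseteq X\setminus X^{\st}(m,m+1)$ and $U\cap(Y_-\cup Y_+)=\emptyset$. Hence $X\setminus\bigl(X^{\st}(m,m+1)\cup Y_-\cup Y_+\bigr)$ contains $U$, which is dense in the codimension-one variety $V$; in particular it contains a codimension-one subvariety, so $\GX(m,m+1)$ is not admissible. As $m$ was arbitrary, every $\GX(m,m+1)$ with $m>i$ is not admissible. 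In the second alternative the same computation, now using $\mu_L(Y_0)\ge a_{i+1}\ge a_{k+1}$ for $k<i$ so that $X^-(Y_0)$ enters the decomposition of $X\setminus X^{\st}(k,k+1)$, shows that every $\GX(k,k+1)$ with $k<i$ is not admissible. This is the dichotomy in the statement.

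I expect no serious obstacle here. The only point requiring care is that $X$ is not assumed smooth, so the cells $X^{\pm}(Y)$ need not be affine bundles over the fixed components; but this is harmless, as the argument never uses the fibre structure of the cells --- only that there are finitely many of them, that an irreducible subvariety meeting their union densely must meet one of them densely, and the elementary bookkeeping of which cells enter $X\setminus X^{\st}(j,j+1)$ as $j$ varies (the first family growing and the second shrinking with $j$, which is precisely what makes the non-admissibility condition one-sided).
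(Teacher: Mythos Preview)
Your proof is correct and follows essentially the same approach as the paper's: locate the obstructing divisor inside the closure of a single Bia\l ynicki--Birula cell $X^+(Y_0)$ (resp.\ $X^-(Y_0)$), then observe that this same cell lies in $X\setminus X^{\st}(m,m+1)$ for all $m>i$ (resp.\ all $k<i$). Your version is simply more careful than the paper's about the $Y_\pm$ bookkeeping (the paper asserts that $X^+(Y_j)\subset X\setminus(X^s(m,m+1)\cup Y_\pm)$, which is slightly imprecise when $Y_j=Y_-$, whereas you work with $U=(V\cap X^+(Y_0))\setminus Y_-$ to avoid this).
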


\begin{proof}	
By assumption, $X\setminus (X^s(i,i+1)\cup Y_{\pm})$ contains a divisor. Thus by construction such a divisor will be contained either in the closure of a cell $X^{+}(Y_j)$, if $j\leq i $, or in the closure of $X^-(Y_j)$, if $j \geq i+1$. Let us prove the statement in the first case; the second one is analogous. By definition, for any $m>i$ the  cell $X^+(Y_j)$ will be contained in $X\setminus (X^s(m,m+1)\cup Y_{\pm})$, proving that any other quotient $\GX(m,m+1)$ will not be admissible. 
\end{proof}

\begin{corollary}\label{cor:admissiblesinksource}
	If $\GX(0,1)$ and $\GX(r-1,r)$ are admissible, then every geometric quotient $\GX(i,i+1)$, for $i=1,\ldots,r-2$ is admissible too.
\end{corollary}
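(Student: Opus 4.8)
The plan is to obtain this as an immediate consequence of Lemma \ref{lemma:admissibleinterval}, argued contrapositively. Suppose, towards a contradiction, that some inner geometric quotient $\GX(i,i+1)$ with $1\le i\le r-2$ is not admissible. By Lemma \ref{lemma:admissibleinterval}, one of the following holds: either every $\GX(k,k+1)$ with $k<i$ is not admissible, or every $\GX(m,m+1)$ with $m>i$ is not admissible. In the first alternative, since $i\ge 1$ the index $k=0$ is allowed, so $\GX(0,1)$ is not admissible; in the second alternative, since $i\le r-2$ the index $m=r-1$ is allowed, so $\GX(r-1,r)$ is not admissible. In either case one of the two extremal geometric quotients fails to be admissible, contradicting the hypothesis. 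Therefore $\GX(i,i+1)$ is admissible for every $i=1,\ldots,r-2$.

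There is no real obstacle here: the entire content is already contained in Lemma \ref{lemma:admissibleinterval}, and the only point requiring (minimal) care is the bookkeeping of indices — one must check that the restriction $1\le i\le r-2$ is precisely what forces $0$ to lie in the range $\{k<i\}$ and $r-1$ to lie in the range $\{m>i\}$, so that the two alternatives of the lemma genuinely reach the sink and source quotients.
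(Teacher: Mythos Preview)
Your argument is correct and is exactly the intended one: the paper states this as a corollary of Lemma~\ref{lemma:admissibleinterval} without further proof, and your contrapositive application of that lemma, together with the index check $1\le i\le r-2$ ensuring that $0<i$ and $r-1>i$, is precisely the (trivial) content that is left implicit.
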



\begin{corollary}\label{coroll:bordismiffadmissible}
	A B-type $\C^*$-action is a bordism if and only if every geometric quotient is admissible.
\end{corollary}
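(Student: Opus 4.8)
The plan is to derive the statement directly from Remark \ref{remark:bordismiffadmissible}, after rephrasing both sides of the asserted equivalence in terms of the codimensions of the Bia\l ynicki--Birula cells of the inner fixed components. Since both ``being a bordism'' (Definition \ref{def:bordism}) and ``every geometric quotient is admissible'' presuppose, for the implications we care about, that the action is of $B$-type, the task reduces to showing that \emph{for a $B$-type action} being a bordism is equivalent to the admissibility of every $\GX(i,i+1)$, $i=0,\dots,r-1$.

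First I would unwind Definition \ref{def:bordism}: for a $B$-type action, being a bordism means exactly that $\overline{X^{\pm}(Y)}$ contains no codimension-one subvariety for every inner component $Y\in\cY^{\circ}$. The only delicate point here is to identify this with the requirement $\codim_X\overline{X^{\pm}(Y)}\ge 2$ for all $Y\in\cY^{\circ}$, i.e.\ to rule out the codimension-zero case: for $Y$ inner, $Y\neq Y_{\pm}$, so the cell $X^{+}(Y)$ (resp.\ $X^{-}(Y)$) is disjoint from the dense open cell $X^{+}(Y_{+})$ (resp.\ $X^{-}(Y_{-})$) by the Bia\l ynicki--Birula decomposition; hence $\overline{X^{\pm}(Y)}$ is a proper closed subset of $X$, and containing no codimension-one subvariety then forces each of its irreducible components to have codimension at least two.

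Finally I would invoke Remark \ref{remark:bordismiffadmissible}, which states that the condition ``$\codim_X\overline{X^{\pm}(Y)}\ge 2$ for every $Y\in\cY^{\circ}$'' holds if and only if every geometric quotient is admissible; combined with the reformulation of the previous paragraph this gives the claim. Optionally, one may further reduce the admissibility of \emph{all} quotients to that of the two extremal ones $\GX(0,1)$ and $\GX(r-1,r)$ via Lemma \ref{lemma:admissibleinterval} and Corollary \ref{cor:admissiblesinksource}, but this is not needed. I do not expect a genuine obstacle in this argument: the substantive input is Remark \ref{remark:bordismiffadmissible}, and the only step requiring care is the elementary codimension bookkeeping of the second paragraph, namely checking that ``no codimension-one subvariety'' and ``codimension $\ge 2$'' coincide for the relevant (necessarily proper) closed subsets.
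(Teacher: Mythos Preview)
Your proposal is correct and follows exactly the route the paper intends: the corollary is stated without proof because it is immediate from Remark \ref{remark:bordismiffadmissible} together with Definition \ref{def:bordism}, which is precisely what you do. Your extra care in ruling out the codimension-zero case is a harmless clarification of a point the paper leaves implicit.
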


\begin{definition}\label{definition:extension} 
	Let $(X,L)$ be a polarized pair endowed with a $\C^*$-action of criticality $r$, and assume that the action is a bordism. For every index $i=0,\ldots,r-1$, we define an \emph{extension map} as $e_i:\Div(\GX(i,i+1))\to\Div(X)$, $D\mapsto e_i(D)=\overline{\pi^{-1}_i(D)}$,
	where $\pi_{i}:X^s(i,i+1)\to \GX(i,i+1)$ is the geometric quotient map.
\end{definition}

\begin{lemma} \label{lemma:extensionrational}
	Let $(X,L)$ be as in Definition \ref{definition:extension}. %
	For every $i=0,\ldots,r-1$ and every rational function $f\in \C(\GX(i,i+1))$, we have $e_i(\div(f))=\div(f\circ \pi_i)$.
\end{lemma}

\begin{proof}
	We have $\div(f\circ \pi_i)=\overline{\pi_i^{-1}(\div(f))}+E$, where $E$ is a Weil divisor whose support is contained in $X\setminus X^s(i,i+1)$. Since $\GX(i,i+1)$ is admissible by Corollary \ref{coroll:bordismiffadmissible}, $E=0$.
\end{proof}

\begin{lemma}\label{lemma:extensionlinearequivalence}
	Let $(X,L)$ be as in Definition \ref{definition:extension}. Then for any $D,D'\in \Div(\GX(i,i+1))$ such that $D\sim D'$, it holds that $e_i(D)\sim e_i(D')$.
\end{lemma}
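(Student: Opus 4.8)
The plan is to reduce linear equivalence of $e_i(D)$ and $e_i(D')$ to the case of principal divisors, which is exactly what Lemma \ref{lemma:extensionrational} handles. First I would observe that the extension map $e_i$ is additive: for $D, D' \in \Div(\GX(i,i+1))$ we have $e_i(D+D') = e_i(D) + e_i(D')$, and $e_i(-D) = -e_i(D)$, simply because $\pi_i$ is a quotient map (in particular dominant and equidimensional on $X^s(i,i+1)$, which is open in the normal variety $X$), so $\overline{\pi_i^{-1}(\,\cdot\,)}$ commutes with taking the prime-divisor decomposition and with signs. Hence $e_i$ extends to a homomorphism of divisor groups, and it suffices to check that $e_i$ sends principal divisors to principal divisors.

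Now suppose $D \sim D'$, so $D - D' = \div(f)$ for some $f \in \C(\GX(i,i+1))^\times$. By additivity, $e_i(D) - e_i(D') = e_i(\div(f))$. Applying Lemma \ref{lemma:extensionrational}, we get $e_i(\div(f)) = \div(f \circ \pi_i)$, where $f \circ \pi_i$ is a (nonzero) rational function on $X$, using that $\pi_i$ is dominant so that pullback of rational functions is well defined and injective. Therefore $e_i(D) - e_i(D') = \div(f \circ \pi_i)$ is principal on $X$, i.e. $e_i(D) \sim e_i(D')$, as desired.

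The only point requiring a little care is the additivity of $e_i$, since $e_i$ is defined via closure of preimages and closure does not in general commute with sums of arbitrary subsets. The resolution is that $D$ and $D'$ are divisors: writing them as $\ZZ$-combinations of prime divisors $Z$ on $\GX(i,i+1)$, one checks componentwise that $\overline{\pi_i^{-1}(Z)}$ is a well-defined divisor on $X$ (here one uses normality of $X$ and that $\pi_i$ is the restriction of a morphism to the open dense set $X^s(i,i+1)$, whose complement has codimension $\geq 2$ away from $Y_\pm$ by admissibility, Corollary \ref{coroll:bordismiffadmissible}, exactly as in the proof of Lemma \ref{lemma:extensionrational}), and that for distinct primes the corresponding closures are distinct. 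This makes $e_i$ a genuine group homomorphism, and the argument above goes through. I do not expect any serious obstacle; the content is entirely in Lemma \ref{lemma:extensionrational}, and this lemma is a formal consequence of it together with additivity.
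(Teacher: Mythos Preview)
Your proposal is correct and follows essentially the same route as the paper: write $D'=D+\div(f)$, use additivity of $e_i$, and invoke Lemma \ref{lemma:extensionrational} to identify $e_i(\div(f))$ with a principal divisor on $X$. The paper's proof is terser (it passes over additivity without comment), whereas you spell out why $e_i$ is a group homomorphism, but the content is identical.
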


\begin{proof}
	Suppose that $D'=D+\div(f)$. Then, using Lemma \ref{lemma:extensionrational}, we obtain:
	\begin{equation*}
		\begin{split}
			e_i(D')&=\overline{\pi_i^{-1}(D')}=\overline{\pi_i^{-1}(D+\div(f))}=\\
			&=\overline{\pi_i^{-1}(D)}+\overline{\pi_i^{-1}(\div f)}=e_i(D)+e_i(\div(f)).
		\end{split}
	\end{equation*} 
\end{proof}

\begin{lemma}\label{lemma:extensioninvariantdivisors}
	Let $\C^*$ act on the polarized pair $(X,L)$. Then every Cartier divisor is linearly equivalent to a $\C^*$-invariant divisor. Moreover, for every $i= 0,...,r-1$, the action of $\C^*$ on $X$ is a bordism if and only if the only $\C^*$-invariant divisors are $Y_{\pm}$ and the divisors of the form $e_i(E)$, for $E\in \Div(\GX(i,i+1))$.
\end{lemma}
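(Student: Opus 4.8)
The statement has two parts. For the first part, that every Cartier divisor on $X$ is linearly equivalent to a $\C^*$-invariant one, the plan is to invoke the linearization of the action on $L$ (\cite[Proposition 2.4]{KKLV}) together with the following standard averaging idea: given a Cartier divisor $D$, write $D$ as the difference of two very ample divisors, so it suffices to treat the case $D$ very ample; then $D$ corresponds to a section of $\cO_X(D)$, and since the $\C^*$-action on $\HH^0(X,\cO_X(D))$ (with respect to a fixed linearization) decomposes into weight spaces, any nonzero section has a nonzero weight component, whose zero locus is a $\C^*$-invariant divisor linearly equivalent to $D$. Concretely, one chooses a linearization of $\cO_X(D)$, decomposes $\HH^0(X,\cO_X(D))=\bigoplus_\chi \HH^0(X,\cO_X(D))_\chi$, picks a section $s$ vanishing on $D$, takes any weight component $s_\chi\neq 0$, and observes $\div(s_\chi)\sim D$ is $\C^*$-invariant.

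For the second part, assume the action is a bordism and let $E'$ be a prime $\C^*$-invariant divisor with $E'\neq Y_\pm$. Since $E'$ is irreducible, invariant, and not contained in the fixed locus in codimension one (it is not $Y_-$ or $Y_+$, and by the bordism hypothesis the closures of all inner cells have codimension $\geq 2$ by Remark \ref{remark:bordismiffadmissible} / Corollary \ref{coroll:bordismiffadmissible}), the generic point of $E'$ lies in the stable locus $X^s(i,i+1)$ for some $i$ — indeed, by Corollary \ref{coroll:bordismiffadmissible} every geometric quotient is admissible, so for a fixed $i$ the complement $X\setminus(X^s(i,i+1)\cup Y_\pm)$ contains no divisors, hence $E'\cap X^s(i,i+1)\neq\emptyset$. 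Then $E'\cap X^s(i,i+1)$ is a nonempty $\C^*$-invariant divisor in $X^s(i,i+1)$, on which the quotient map $\pi_i\colon X^s(i,i+1)\to\GX(i,i+1)$ is geometric, so $E'\cap X^s(i,i+1)=\pi_i^{-1}(E)$ for a prime divisor $E$ on $\GX(i,i+1)$ (a $\C^*$-invariant prime divisor in a space with free action and geometric quotient descends to a prime divisor downstairs). Taking closures gives $E'=\overline{\pi_i^{-1}(E)}=e_i(E)$, which is the claim. One should also note the reverse: each $Y_\pm$ is visibly $\C^*$-invariant, and each $e_i(E)=\overline{\pi_i^{-1}(D)}$ is invariant because $\pi_i$ is $\C^*$-invariant; and a general (non-prime) invariant divisor is a $\Z$-combination of prime invariant ones, hence of the listed type.

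\textbf{Main obstacle.} The delicate point is the descent step: showing that a $\C^*$-invariant prime divisor meeting $X^s(i,i+1)$ is the preimage of a prime divisor on the geometric quotient. This is clean precisely because $\pi_i$ is a geometric quotient with all orbits of dimension one and trivial (or finite, if one is not careful about equalization) stabilizers on $X^s(i,i+1)$; one must check that $\pi_i$ being a good geometric quotient implies the pullback map on prime divisors meeting the stable locus is a bijection with prime divisors on the target — this uses that $\pi_i$ is surjective, open, with irreducible fibers, so it is compatible with taking closures of subvarieties and preserves irreducibility and codimension. The rest of the argument is bookkeeping: reducing arbitrary Cartier divisors to very ample ones in part one, and reducing arbitrary invariant Weil divisors to prime ones in part two.
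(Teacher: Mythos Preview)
Your proof is correct. Part one is essentially identical to the paper's: both reduce to the very ample case and then find an invariant member of the linear system; the paper phrases this as ``the $\C^*$-action on $|A_1|$ has a fixed point'', while you phrase it as ``pick a nonzero weight component of a section'', and these are the same observation.

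For part two your route differs slightly from the paper's. The paper argues by cases on a prime invariant divisor $D$: if $D$ is pointwise fixed, the bordism hypothesis forces $D=Y_\pm$; if not, the general orbit in $D$ has sink in some component $Y_1$ and source in some $Y_2$, so $D\subset \overline{X^-(Y_1)}\cap\overline{X^+(Y_2)}$, and the codimension condition in the definition of bordism forces $Y_1=Y_-$, $Y_2=Y_+$, whence the general point of $D$ lies in every $X^s(i,i+1)$. You bypass this orbit analysis by invoking Corollary~\ref{coroll:bordismiffadmissible} directly: admissibility of $\GX(i,i+1)$ says $X\setminus(X^s(i,i+1)\cup Y_\pm)$ contains no divisor, so any prime invariant divisor other than $Y_\pm$ automatically meets $X^s(i,i+1)$. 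Both arguments then finish identically by descent through the geometric quotient $\pi_i$. Your shortcut is a bit more economical; the paper's version has the virtue of making the flow picture (where orbits in $D$ begin and end) explicit, which is in the spirit of the Bia{\l}ynicki-Birula viewpoint used elsewhere in the paper.
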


\begin{proof}
	Since every Cartier divisor is difference of two very ample divisors, we prove that every very ample divisor is linearly equivalent to a $\C^*$-invariant one. Let us consider the induced $\C^*$-action on the linear system $|A_1|$, with $A_1$ very ample; the action will have at least a fixed point, which is associated to a $\C^*$-invariant divisor, hence we conclude.
	
	We now show the second part of the statement, noting that the only if part is obvious. 
Let us then assume that the $\C^*$-action on $X$ is a bordism. Note that the divisors of the form  $e_i(E)$, $E\in \Div(\GX(i,i+1))$ are clearly $\C^*$-invariant. Now let $D$ be an irreducible $\C^*$-invariant divisor. If $D$ is pointwise fixed by the action, then it is either the sink or the source, by definition of bordism. On the other hand, if $D$ is $\C^*$-invariant but not pointwise fixed, then it contains an $(n-2)$-dimensional family of $1$-dimensional orbits (whose union is dense in $D$). Let $\C^*p$ be the general element of this family, and let $Y_1$, $Y_2$ be the fixed point components of the action containing the sink and the source of $\C^*p$, respectively. It follows that $D\subset X^-(Y_1)\cap X^+(Y_2)$, and from the definition of bordism we conclude that $Y_1=Y_-$, $Y_2=Y_+$. 
In particular, the general orbit in $D$ is contained in $X^{\st}(i,i+1)$, for every $i$, and we may conclude that $D=e_i((D\cap X^{\st}(i,i+1))\git \C^*)$.
\end{proof}

We conclude this section by recalling the notion of \emph{geometric realization of a birational map}, which was already introduced in \cite[Definition 2.10]{WORS4}:

\begin{definition}\label{def:geomrealization}
	Given a birational map $\phi: Y_-\dashrightarrow Y_+$ between normal projective varieties, a \emph{geometric realization of $\phi$}  is a normal projective variety $X$, endowed with a $\C^*$-action of B-type such that the sink and the source are precisely $Y_-, Y_+$ and the natural birational map among them, defined in Remark \ref{remark:definitionpsi}, coincides with $\phi$. 
\end{definition}

As we will later see, the properties we are requiring to a geometric realization guarantee that the birational transformation $\phi$ can be accurately described in terms of features of the $\C^*$-action. In particular, up to the choice of a polarization of $X$, the corresponding GIT quotients will give us a factorization of $\phi$. 

In Section \ref{sec:MDR2} we will address the question of constructing geometric realizations of small modifications; more concretely our approach will show the existence of a strong relation among small modifications defined by MDBs and bordisms.


\section{Pruning a $\C^*$-action}\label{sec:constructionbordism}

In this section we show how to construct, starting from a $\C^*$-action on a polarized pair $(Z,E)$ satisfying certain assumptions, $\C^*$-equivariant birational maps $Z\dashrightarrow X$ to normal projective varieties $X$ supporting $\C^*$-actions that are bordisms: this algebro-geometric procedure will be called the \emph{pruning} of the $\C^*$-action on $Z$ (see Definition \ref{def:pruning}).

As we have seen in Definition \ref{def:bordism}, a bordism requires an isomorphism between the extremal fixed point components and the extremal geometric quotients, and the non existence of $\C^*$-invariant divisors other than the extremal fixed point components and the extensions (see Lemma \ref{lemma:extensioninvariantdivisors}). At least in the smooth case, a variety with a $\C^*$-action can be birationally modified --by means of a $\C^*$-equivariant blow-up (see \cite[Definition 3.14]{BarbanRomano}) along the extremal fixed point components-- in order to fulfill the first property. On the other hand, many examples show that the existence of inner $\C^*$-invariant divisors is common. For instance, this is always the case when the variety $Z$ has an isolated extremal fixed point (cf. \cite[Lemma~2.6]{WORS1}, see also \cite[Theorem 4.5]{RW} for particular examples of varieties with this feature). Then one can conclude that, even in the smooth case, performing a $\C^*$-equivariant blow-up does not turn a $\C^*$-action on a variety into a bordism. Pruning is then a generalization of that procedure, that allows us to birationally and $\C^*$-equivariantly modify a $\C^*$-action on a variety with  into a bordism. Intuitively the pruning procedure consists of cutting the segment of weights of the $\C^*$-action on $Z$ into a smaller interval and thus removing some fixed components of certain weights: 
\vspace{0.2cm}
\begin{center}
	\includegraphics[scale=0.15]{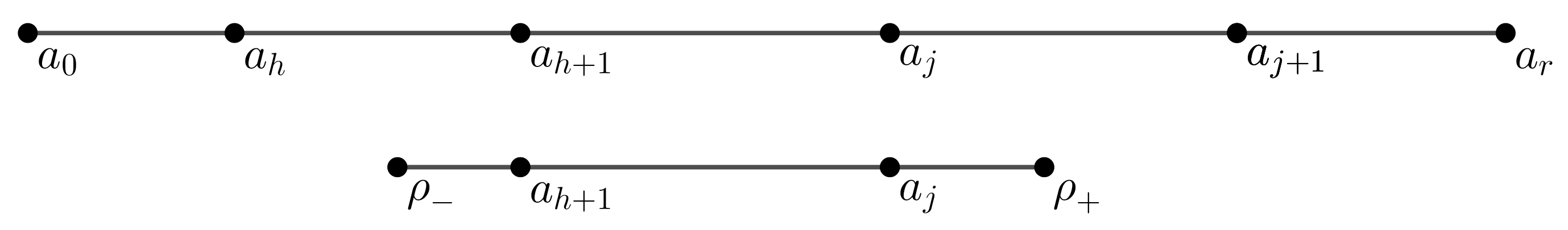}
\end{center}
The first segment represents the weights associated to the fixed components of the $\C^*$-action on $(Z,E)$. In order to obtain a bordism, we remove the components with weights smaller than or equal to (resp. greater than or equal to) $a_h$ (resp. $a_{j+1}$), so that every geometric quotient kept is admissible, and the sink and the source of the $\C^*$-action on the pruning $X$ of $Z$ are respectively the geometric quotients $Y_-:=\GZ(h,h+1),Y_+:=\GZ(j,j+1)$.

\begin{setup}\label{setup:bordism}
	Throughout this section $(Z,E)$ will denote a polarized pair, 
	admitting a nontrivial $\C^*$-action with critical values $a_0,\dots,a_r$ and bandwidth $\delta$. Moreover let us consider $\rho_-,\rho_+\in[a_0,a_r]\cap \Q$, 
	with $\rho_-< \rho_+$.
\end{setup}

\begin{lemma}\label{lemma:fingenS2}
	In the situation of Set-up \ref{setup:bordism}, assume furthermore that $\rho_{\pm}$ are integers, that $E$ is very ample, and that the embedding $Z\subset \P(\HH^0(Z,E))$ is projectively normal. 
	Then the $\C$-algebra 
	$$S:=\bigoplus_{m\geq 0}\bigoplus_{k=m\rho_-}^{m\rho_+}\HH^0(Z,mE)_k.$$ 
	is finitely generated. 
\end{lemma}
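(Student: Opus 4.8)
The goal is to show that $S = \bigoplus_{m\geq 0}\bigoplus_{k=m\rho_-}^{m\rho_+}\HH^0(Z,mE)_k$ is finitely generated. The natural strategy is to realize $S$ as a subalgebra of the section ring $R := R(Z;E) = \bigoplus_{m\geq 0}\HH^0(Z,mE)$, which, under the projective normality hypothesis, equals the homogeneous coordinate ring of $Z\subset\P(\HH^0(Z,E))$ and is therefore a finitely generated $\C$-algebra, generated in degree $1$. The $\C^*$-action gives $R$ a second grading by weights, so $R$ becomes bigraded: $R = \bigoplus_{m\geq 0}\bigoplus_{k\in\Z} R_{m,k}$ with $R_{m,k} = \HH^0(Z,mE)_k$. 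Because $E$ is very ample and the linearization can be chosen so that the weights on $\HH^0(Z,E)$ lie in $[a_0, a_r]$ (or at least in a bounded interval — more carefully, rescaling the linearization by a character we may arrange the weights on $\HH^0(Z,E)$ to lie in $[\rho_-,\rho_+]$ is too strong; rather one uses the critical values), the bigrading is supported in a strongly convex cone of $\Z^2$.

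**Key steps.** First, I would fix a linearization of the $\C^*$-action on $E$ and observe that $R$ is a finitely generated bigraded algebra, generated by the finitely many weight vectors spanning $R_{1,k}$ for the various $k$ appearing. Second, I would identify $S$ as the subalgebra of $R$ cut out by the linear inequalities $m\rho_- \le k \le m\rho_+$ on the bigrading lattice — that is, $S = \bigoplus_{(m,k)\in\sigma\cap\Z^2} R_{m,k}$ where $\sigma = \{(m,k) : m\rho_- \le k \le m\rho_+,\ m\ge 0\}$ is a (rational, strongly convex, polyhedral) cone. Third, I would invoke the standard fact that if $A$ is a finitely generated algebra graded by a finitely generated abelian group $\Gamma$ (here $\Gamma=\Z^2$) whose support is a finitely generated submonoid, then for any rational polyhedral subcone $\sigma$ of the cone spanned by the support, the "restricted" subalgebra $\bigoplus_{\gamma\in\sigma} A_\gamma$ is again finitely generated. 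This is essentially \cite[Lemma 4.8]{ADHL}-type reasoning or follows from finite generation of the semigroup $\sigma\cap\Z^2$ (Gordan's lemma) combined with Noetherianity; concretely, $S$ is a finitely generated module over the Veronese-type subalgebra generated by $R_{1,k}$ for $m\rho_-\le k\le m\rho_+$, $m$ small. Alternatively, and perhaps most cleanly: consider the $\Z$-grading on $R$ given by $(m,k)\mapsto$ the two linear functionals $\ell_-(m,k) = k - m\rho_-$ and $\ell_+(m,k) = m\rho_+ - k$ (using $\rho_\pm\in\Z$), so $S$ is the part of $R$ where both functionals are $\ge 0$; this is the ring of invariants / the semistable locus ring for a $(\C^*)^2$-type truncation, and one can cite \cite[Proposition 2.11]{WORS3} / the construction of GIT quotients together with finite generation of $R$.

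**Main obstacle.** The crux is the finite-generation statement for the cone-truncation of a finitely generated multigraded algebra. The cleanest route is: $R$ is generated in bidegrees $(1,k)$; the monoid $P = \{(m,k) : R_{m,k}\ne 0\}$ is finitely generated (it is the monoid generated by those finitely many $(1,k)$); the cone $\sigma$ is rational polyhedral, so by Gordan's lemma $\sigma \cap (\text{group generated by }P)$ is a finitely generated monoid, say with generators $v_1,\dots,v_s$; then choosing finitely many elements of $R$ lying in bidegrees $v_1,\dots,v_s$ together with enough of the degree-$(1,k)$ generators, one checks these generate $S$ — here one must be slightly careful that $R_{v}\ne 0$ need not hold for every lattice point $v$ of $\sigma$, but since $R$ is Noetherian and $S\subset R$ is the isotypical sum over a finitely generated submonoid, the standard argument (e.g. \cite[Theorem 3.10]{BBS} or the Veronese/Rees-algebra trick) applies: $S$ is a finitely generated module over a polynomial-type subring, hence finitely generated. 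I would spell out this last compatibility as the one genuinely technical point and otherwise treat the bigraded finite-generation lemma as known. Projective normality is exactly what guarantees $R$ is generated in degree $1$, which makes the monoid $P$ finitely generated and the bookkeeping transparent; without it one would still be fine but would need to pass to a Veronese.
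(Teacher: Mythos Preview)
Your proposal is correct and follows essentially the same strategy as the paper: use projective normality to reduce to a polynomial-type situation and then apply Gordan's lemma. The difference is where the cone lives. You work with the $\Z^2$-bigrading of $R$ and want to truncate by the cone $\sigma=\{(m,k): m\rho_-\le k\le m\rho_+\}\subset\R^2$, invoking the general principle that the cone-truncation of a finitely generated multigraded algebra is again finitely generated. The paper instead lifts to the polynomial ring $\Sym\HH^0(Z,E)$: choosing weight-homogeneous generators $s_1,\dots,s_n$ of $\HH^0(Z,E)$ with weights $w_i$, it observes that the monomials $\prod s_i^{a_i}$ landing in $\widetilde{S}:=\bigoplus_{m}\bigoplus_{k=m\rho_-}^{m\rho_+} S^m\HH^0(Z,E)_k$ are precisely the lattice points of the rational polyhedral cone $\{a\in\R^n_{\ge 0}:\sum(w_i-\rho_-)a_i\ge 0,\ \sum(\rho_+-w_i)a_i\ge 0\}$, so Gordan's lemma applies directly to the monomials themselves, giving finite generation of $\widetilde{S}$. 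Projective normality then makes $\widetilde{S}\twoheadrightarrow S$ surjective.

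The paper's route is marginally more elementary: because it works in the polynomial ring, Gordan's lemma literally hands you the algebra generators, and the subtlety you flag (that $R_v$ may vanish for some $v\in\sigma$) never arises. Your route is perfectly valid but relies on the general truncation lemma, which, as you note, needs a short argument or a citation (e.g.\ \cite[Proposition~1.2.2]{adhl}) rather than being immediate from Gordan alone. Both approaches use projective normality in the same way, just at different moments: you use it upfront to get $R$ generated in degree~$1$, the paper uses it at the end to push the finite generation of $\widetilde{S}$ down to $S$.
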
		

\begin{proof}
	We will show that the $\C$-algebra
	$$\widetilde{S}:=\bigoplus_{m\geq 0}\bigoplus_{k=m\rho_-}^{m\rho_+}S^m\HH^0(Z,E)_k,$$
	is finitely generated, and that the natural homomorphism $\widetilde{i^*}:\widetilde{S} \to S$ is surjective.
	
	We first prove that the algebra $\widetilde{S}$ is finitely generated. To this end, since $E$ is ample using \cite[Lemma 2.4]{BWW} we may suppose that $\HH^0(Z,E)$ is generated by $s_1,\ldots,s_n$, with $s_i\in \HH^0(Z,E)_{w_i}$ for every $i$, where $w_i\in [a_0,a_r]\cap \Z$ are the weights of the induced $\C^*$-action on $\HH^0(Z,E)$. The monomials $\prod_is_i^{m_i}$ in $S^m(\HH^0(Z,E))$ belonging to $\widetilde{S}$ are those which satisfy the following system of inequalities:
	\begin{equation*}
		\begin{cases} \sum_{i=1}^n(w_i-\rho_-)m_i \geq 0, \\ \sum_{i=1}^n (\rho_+-w_i)m_i \geq 0, \\ m_i \geq 0.
		\end{cases} 
	\end{equation*}	
	This is a rational polyhedral cone in $\R^n$, therefore by Gordan's Lemma (see, for instance,  \cite[Proposition 1.2.17]{CLS}) its intersection with the lattice of monomials is finitely generated. 
	
	Finally, in order to prove that $\widetilde{i^*}$ is surjective we simply note that the natural map $i^*:\Sym(\HH^0(Z,E))\to \bigoplus_{m\geq 0}\HH^0(Z,mE)$ is surjective --thanks to the projective normality of $Z\subset \P(\HH^0(Z,E))$-- and $\C^*$-equivariant. 
\end{proof}

We remark that the Lemma above holds in a greater generality, but we have presented it in this way for the sake of simplicity.

\begin{definition}\label{def:pruning}
	In the situation of Set-up \ref{setup:bordism},	let $d\in \Z_{>0}$ be the minimum positive integer such that $\rho_\pm d\in \Z$. We define the {\em pruning of $(Z,E)$  with respect to $\rho_-,\rho_+$} as:
	$$
	X:=\Proj S^{(nd)},\quad n\gg 0,
	$$
	where $S^{(nd)}$ is the graded $\C$-algebra $S^{(nd)}=\bigoplus_{m\geq 0}S^{(nd)}_m$ whose graded pieces are defined by 
	$$
	S^{(nd)}_m:=\bigoplus_{k=mnd\rho_-}^{mnd\rho_+}\HH^0(Z,mndE)_k, \quad m\geq 0.
	$$
	\end{definition}
\begin{remark}\label{rem:pruning} Note that $S^{(nd)}$ is finitely generated for $n\gg 0$ by Lemma \ref{lemma:fingenS2}, and that $\Proj S^{(nd)}=\Proj S^{(n'd)}$ for $n,n'\gg 0$. Therefore $X$ is well-defined and depends only on the pair $(Z,E)$ and on the rational numbers $\rho_-,\rho_+$. Furthermore, the pruning of $(Z,E)$ with respect to $\rho_-,\rho_+$ is equal to the pruning of $(Z,nE)$ with respect to $n\rho_-,n\rho_+$, for any $n>0$. 
\end{remark}

\begin{remark} 
In the case in which $\rho_\pm$ belong to the same open interval $(a_i,a_{i+1})$, then the resulting variety will be a $\P^1$-fibration over the geometric quotient $\GZ(i,i+1)$
, whose fibers are the closures of the $1$-dimensional orbits of the induced $\C^*$-action. The sink and the source of the action are two sections of the fibration. 
Let us notice also that if $Z$ is smooth and $\rho_-\in (a_0,a_1)$, $\rho_+\in (a_{r-1},a_r)$, then the pruning of $Z$ with respect to $\rho_-,\rho_+$ is precisely the $\C^*$-equivariant blow-up of $Z$ along the sink and the source (see \cite[Definition 3.14]{BarbanRomano}).
\end{remark}

\begin{theorem}\label{theorem:Xbordism}
	In the situation of Set-up \ref{setup:bordism}, take $\rho_-\in (a_h,a_{h+1})\cap \Q$, $\rho_+\in (a_{j},a_{j+1})\cap \Q$ for some $h, j\in\{0,\dots,r-1\}$. Then the pruning $X$ of $(Z,E)$ with respect to $\rho_-,\rho_+$ is a normal 
	projective variety, endowed with a B-type $\C^*$-action whose sink and source are, respectively, $\GZ(h,h+1), \GZ(j,j+1)$. Moreover $X$ is $\C^*$-equivariantly birational to $Z$ and, if $\codim \overline{Z^{\pm}(Y)}\geq 2$ for every fixed point component $Y$ of $Z$ with $h<\mu_E(Y)<j$, then the $\C^*$-action on the pruning $X$ is a bordism. 
\end{theorem}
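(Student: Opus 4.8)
The plan is to analyze the pruning $X = \Proj S^{(nd)}$ step by step, establishing first the basic geometric properties (normality, the induced $\C^*$-action and its critical values), then the identification of the sink and source with the claimed geometric quotients, then the $\C^*$-equivariant birationality to $Z$, and finally the bordism property under the codimension hypothesis. Throughout, the guiding principle is that the multigraded algebra $S$ (and its truncations) organize the weight decomposition of $\bigoplus_m \HH^0(Z,mE)$, so that taking $\Proj$ relative to the total grading produces a variety with a residual $\C^*$-action coming from the internal ($k$-)grading.

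\textbf{Step 1: Basic properties of $X$.} First I would note that $S^{(nd)}$ is finitely generated for $n\gg 0$ by Lemma~\ref{lemma:fingenS2} (after replacing $E$ by a sufficiently divisible multiple to achieve very ampleness and projective normality, which is harmless by Remark~\ref{rem:pruning}), so $X$ is a projective variety. Normality follows because $S^{(nd)}$ is a direct summand (as an $S^{(nd)}$-module, via the internal grading) of the integrally closed ring $\bigoplus_m \HH^0(Z, mndE)$ truncated appropriately; more precisely, $S$ is the ring of sections of a coherent sheaf on a normal variety cut out by weight conditions, and being a Veronese-type subalgebra preserves normality. The $\C^*$-action on $X$ is induced by the internal grading of $S^{(nd)}$: the torus acts on the summand $\HH^0(Z,mndE)_k$ with weight $k$, and since this grading is compatible with the total ($m$-)grading defining $\Proj$, it descends to a $\C^*$-action on $X$. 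By Remark~\ref{remark:projquotients} applied to each weight slab, the fixed components and critical values of this action on $X$ are exactly those of $Z$ with critical value strictly between $\rho_-$ and $\rho_+$, i.e.\ the values $a_{h+1}, \ldots, a_j$.

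\textbf{Step 2: Sink, source, and B-type.} The extremal fixed components correspond to the extremal weight slabs of $S^{(nd)}$, namely $k = mnd\rho_-$ and $k = mnd\rho_+$. Since $\rho_-\in(a_h, a_{h+1})$ and $\rho_+\in(a_j,a_{j+1})$, choosing $\tau = \rho_-$ (resp.\ $\tau = \rho_+$) in Remark~\ref{remark:projquotients} identifies the source-side and sink-side extremal data of $X$ with $\GZ(h,h+1)$ and $\GZ(j,j+1)$ respectively. To conclude that these are the sink and source (not merely fixed components) and that the action is of B-type, I would invoke Definition~\ref{def:btype}: the extremal geometric quotients of $X$ are computed from slabs just inside $[mnd\rho_-, mnd\rho_+]$, and by the same Remark~\ref{remark:projquotients} these agree with $\GZ(h,h+1) = Y_-$ and $\GZ(j,j+1) = Y_+$, so the compositions $\GX(0,1)\to Y_-$ and $\GX(r',r'-1)\to Y_+$ (with $r'$ the criticality of $X$) are isomorphisms.

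\textbf{Step 3: Equivariant birationality and the bordism property.} For birationality, I would observe that for a rational number $\tau$ strictly between two consecutive critical values of $Z$ in the open range $(\rho_-, \rho_+)$ — say $\tau \in (a_i, a_{i+1})$ with $h < i < j$, or for $\tau$ in the outer intervals — the slab $\bigoplus_m \HH^0(Z, mndE)_{mnd\tau}$ is a common localization of (a Veronese of) both $R(Z;E)$-type data and $S^{(nd)}$, so $\GX(i, i+1) = \GZ(i, i+1)$ as $\C^*$-varieties, and since the birational map $\psi$ of Remark~\ref{remark:definitionpsi} is built from these common open pieces, $X$ and $Z$ are $\C^*$-equivariantly birational. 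Finally, for the bordism claim: $X$ is already B-type, so by Corollary~\ref{coroll:bordismiffadmissible} it suffices to show every geometric quotient of $X$ is admissible, equivalently (Remark~\ref{remark:bordismiffadmissible}) that $\codim \overline{X^\pm(Y)} \geq 2$ for every inner fixed component $Y$ of $X$. The inner fixed components of $X$ are exactly the components $Y$ of $Z$ with $h < \mu_E(Y) < j$, and since $X$ and $Z$ agree on a common open set containing the general orbits, $\overline{X^\pm(Y)}$ and $\overline{Z^\pm(Y)}$ have the same codimension; the hypothesis $\codim \overline{Z^\pm(Y)} \geq 2$ then gives the result.

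\textbf{The main obstacle} I anticipate is Step~2 together with the careful bookkeeping in Step~3: making rigorous the claim that taking $\Proj$ of the weight-truncated algebra produces a variety whose BB-cells and fixed components biject with those of $Z$ in the prescribed weight window, and in particular controlling what happens at the boundary slabs $k = mnd\rho_\pm$ (which must be exactly the extremal $\GZ(h,h+1), \GZ(j,j+1)$ and carry weight zero for the residual action after an appropriate twist). The birational identification of inner quotients and the resulting equality of codimensions of BB-cell closures is intuitively clear from the common open set, but must be checked so that the hypothesis on $\codim \overline{Z^\pm(Y)}$ transfers cleanly to $X$; I would handle this by exhibiting an explicit $\C^*$-invariant open subset on which $X$ and $Z$ are isomorphic and which meets every relevant cell in a dense subset.
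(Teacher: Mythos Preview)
Your overall architecture matches the paper's: reduce to integral $\rho_\pm$, identify the extremal slabs with $\GZ(h,h+1)$ and $\GZ(j,j+1)$, show an inner open piece of $X$ agrees with one of $Z$, and read off B-type and the bordism condition from that. The crucial technical step you correctly flag in your final paragraph---exhibiting an explicit $\C^*$-invariant open set on which $X$ and $Z$ are isomorphic---is exactly what the paper does, via the localization identity
\[
\left(\bigoplus_{m}\bigoplus_{k=m\rho_-}^{m\rho_+}\HH^0(Z,mE)_k\right)_{(f)}
=\left(\bigoplus_{m}\bigoplus_{k=0}^{m\delta}\HH^0(Z,mE)_k\right)_{(f)}
\]
for $f$ of weight strictly between $m\rho_-$ and $m\rho_+$. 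This yields an open $U\subset X$ isomorphic to an open of $Z$, and $X=U_-\cup U\cup U_+$ with $U_\pm$ being $\C$-bundles over $\GZ(h,h+1),\GZ(j,j+1)$.

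The genuine gap is your normality argument in Step~1. The ``direct summand'' claim is not correct as stated: $S$ is \emph{not} a direct summand of $R(Z;E)$ as an $S$-module, because projection onto the weight range $[m\rho_-,m\rho_+]$ is not $S$-linear (a product of an element in the range with one outside can land back inside). The vaguer fallback (``Veronese-type subalgebra preserves normality'') does not apply either, since $S$ is not a Veronese of $R(Z;E)$. The paper instead proves normality \emph{locally}, using precisely the open cover above: $U$ is normal because it is isomorphic to an open in the normal variety $Z$, and $U_\pm$ are normal because they are line bundles over the normal varieties $\GZ(h,h+1),\GZ(j,j+1)$. So the idea you propose at the end for Step~3 should be promoted to Step~1 and made to do double duty: it gives normality and $\C^*$-equivariant birationality simultaneously, and then the identification of inner fixed components and the transfer of the codimension hypothesis follow immediately from working on $U$.
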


The proof of Theorem \ref{theorem:Xbordism} will be divided in several steps. Without loss of generality, using Remark \ref{rem:pruning}, we may assume --by exchanging $E$ with a suitable multiple-- that $d=n=1$ and $\rho_{\pm}\in \Z$. Note that, by construction, the action of $\C^*$ on $R(Z;E)$ restricts to an action on $$S=\bigoplus_{m\geq 0}\bigoplus_{k=m\rho_-}^{m\rho_+}\HH^0(Z,mE)_k\subset R(Z;E),$$ providing a $\C^*$-action on $X=\Proj(S)$ such that the natural map $Z\dashrightarrow X$ is $\C^*$-equivariant.

%

Along 
the proof, we will use the following notation. For every $m>0$, we will consider the decomposition $S_m=S_m^-\oplus S_m^0 \oplus S_m^+$, where
	$$
	S_m^\pm:= \HH^0(Z,mE)_{m\rho_{\pm}},\quad 
	S_m^0:=\bigoplus_{m\rho_-<k<m\rho_+}\HH^0(Z,mE)_{k}.
	$$
For every homogeneous element $f\in S_m$, $m>0$, we will denote $D^+(f,X):=	\Spec\left(S_{(f)}\right)\subset X.$ Then we define the following open subsets of $X$:
	\begin{equation*}\label{proof:normality}
	U_{\pm}:= \bigcup_{m> 0}\bigcup_{f\in S_m^\pm} D^+(f,X),\quad 
	U_0:=\bigcup_{m> 0}\bigcup_{f\in S_m^0} D^+(f,X),
	\end{equation*}
and note that, by construction, $X=U_-\cup U_0 \cup U_+$, and that $U_0,U_\pm$ are $\C^*$-invariant.

\begin{step2}\label{step:potaturanormal}
	The variety $X=\Proj(S)$ is normal.
\end{step2}

\begin{proof}
We will show that the affine open subsets $D^+(f,X)\subset X$ are normal, for every $f\in S_m^-\cup S_m^0 \cup S_m^+$. 

Let us start with the case in which $f\in S_m^0$. 
We claim that \begin{equation}S_{(f)}=R(Z;E)_{(f)}\label{eq:local}\end{equation}
	The ``$\subset$'' inclusion is obvious by construction, let us prove the converse. Given an element $\frac{g}{f^a}\in R(Z;E)_{(f)}$, with $g,f^a\in S_{ma}$, 
	we can decompose $g=\sum_{k=0}^{ma\delta} g_k$, with $g_k\in \HH^0(Z,mE)_k$. There exists a suitable $l\geq 0$  -- it is enough to take $l\geq ma\rho_-,ma(\delta-\rho_+)$-- for which 
	$$f^lg\in S_{m(a+l)}^0,
	\mbox{ therefore } \dfrac{f^lg}{f^{l+a}}\in \left(	\bigoplus_{m> 0}S^0_m\right)_{(f)}, $$ 	
	thus we obtain the other inclusion. This tells us that $D^+(f,X)$ is isomorphic to an open subset $D^+(f,Z):=\Spec\big(R(Z;E)_{(f)}\big)$ of $Z$, hence normal. 
	
Next we prove that $D^+(f,X)$ is normal for every $f\in S_m^-$ (the proof for $f\in S_m^+$ is analogous). Note that in this case the inclusion $S_{(f)}\subset R(Z;E)_{(f)}$ is not an equality in general, but an argument analogous to the one above tells us that:
$$
R(Z,E)_{(f)}=\left(S'\right)_{(f)} ,\qquad \mbox{where } S':=\bigoplus_{m\geq 0}\bigoplus_{k=0}^{m\rho_+}\HH^0(Z,mE)_k.
$$

Let us now consider a polynomial ring in one variable $\C[y]$, and consider the $\C^*$-action on it given by $t\cdot(\sum_bc_by^b)=\sum_bc_bt^{-b}y^b$. We then consider the induced $\C^*$-action on the $\C$-algebra $\ol{S}:=S'_{(f)}\otimes_{\C}\C[y]= S'_{(f)}[y]$. Note that the variety $\Spec(\ol{S})=D^+(f,Z)\times \C$ is normal, and so it is its categorical quotient by the induced $\C^*$-action (cf. \cite[Chap. 0, {\S}2, (2)]{MFK}), which is $\Spec\big(\ol{S}^{\C^*}\big)$. 

We may then conclude by noting that we have an isomorphism $\varphi:\ol{S}^{\C^*}\to S_{(f)}$. In fact, every element of $S'^{\C^*}$ can be written as a finite sum of the form:
$$
\sum_{b=0}^{ma(\rho_+-\rho_-)}\dfrac{g_{b}}{f_a}y^b, \mbox{ where }g_b\in \HH^0(Z,maE)_{ma\rho_-+b}. 
$$
The required isomorphism is then given by $\varphi\left(\sum_{b\geq 0}\frac{g_{b}}{f_a}y^b\right)=\sum_{b\geq 0}\frac{g_{b}}{f_a}$.
\end{proof}

\begin{step2}\label{step:potaturabirational}
	The natural $\C^*$-equivariant map $Z\dashrightarrow X$ is birational.
\end{step2}

\begin{proof}
It suffices to notice that, as in the proof of Step~\ref{step:potaturanormal}, the inclusion of graded $\C$-algebras $S\subset R(Z;E)$ induces isomorphisms $S_{(f)}\simeq R(Z;E)_{(f)}$ for every $f\in S_m^0$, $m>0$. In particular the induced rational map $Z\dashrightarrow X$ sends the affine open set $D^+(f,Z)\subset Z$ isomorphically onto $D^+(f,X)\subset X$. Note that this in particular tells us that the open set $U_0\subset X$ introduced above is the isomorphic image of the open subset $\bigcup_{m>0}\bigcup_{f\in S_m^0}D^+(f,Z)\subset Z$. 
\end{proof}

Since the algebra $S$ is finitely generated by Lemma \ref{lemma:fingenS2}, there exists a positive integer $d'$ such that $S^{(d')}=\bigoplus_{m\geq 0} S_{d'm}$ is generated in degree $1$. Therefore $X\subset \P^N:=\P(S_{d'})$, and let us denote by $L=\cO_{\P^N}(1)_{|X}$. Since $X$ is normal, $L$ is $\C^*$-linearizable. For the rest of the section we will consider the $\C^*$-action on the polarized pair $(X,L)$.

\begin{step2}\label{step:potaturafixedpointlocus}
	The sink and the source of the $\C^*$-action on the pruning $X$ of $Z$ are isomorphic to  $\GZ(h,h+1),\GZ(j,j+1)$, respectively. The inner fixed components of $X$ are isomorphic to the fixed point components of $Z$ of weight equal to $a_{h+1},\ldots,a_j$. Furthermore, the criticality of the induced $\C^*$-action on $(X,L)$ is equal to $j-h+1$. 
\end{step2}

\begin{proof}
Note first that we have a surjective homomorphism of $\C$-algebras:
$$S=\bigoplus_{m\geq 0}\bigoplus_{k=m\rho_-}^{m\rho_+} \HH^0(Z,mE)_k \to \bigoplus_{m\geq 0} \HH^0(Z,mE)_{m\rho_-},$$ 
which an inclusion of varieties:  $$\GZ(h,h+1)=\Proj \bigoplus_{m\geq 0}\HH^0(Z,mE)_{m\rho_-}\hooklongrightarrow X.$$
By construction, $\GZ(h,h+1)$ is fixed by the $\C^*$-action. Moreover, using \cite[Remark 2.12]{WORS1}, the induced $\C^*$-action on the projective space $\P^N=\P(S_{d'})\supset X$ defined above  has sink $\P(\HH^0(Z,d'E)_{d'\rho_-})$. Then we may conclude that $\GZ(h,h+1)\subset X$ is the sink of $X$ by noting that $\P(\HH^0(Z,d'E)_{d'\rho_-})\cap X=\GZ(h,h+1)$. In a similar way, one may prove that the source of $X$ is isomorphic to $\Proj \bigoplus_{m\geq 0}\HH^0(Z,mE)_{m\rho_+}\simeq \GZ(j,j+1)$.

In order to compute the inner fixed point components of $X$, we note first that the complement of the extremal fixed point components of $X$ is the open set $U_0=\bigcup_{m> 0}\bigcup_{f\in S_m^0} D^+(f,X)$, which is $\C^*$-equivariantly isomorphic to an open set 
$\bigcup_{m> 0}\bigcup_{f\in S_m^0} D^+(f,Z)\subset Z$ (see Step~\ref{step:potaturabirational}), whose fixed point components are the fixed point components of $Z$ of $L$-weight $\mu_L(Y)\in\{a_{h+1},\dots, a_j\}$.  

Finally, considering the embedding $X\subset \P^N$, the inner fixed point components of $X$ are the irreducible components in the intersections $X\cap \P(\HH^0(Z,d'E)_{d'a_i})$, $i\in\{h+1,\dots,j\}$. In particular, the criticality of the $\C^*$-action on the polarized pair $(X,L)$ is  $j-h+1$. 
\end{proof}

\begin{step2}\label{step:potaturabtype}
	The $\C^*$-action on $(X,L)$ is of B-type.
\end{step2}

\begin{proof}
	Using Remark \ref{remark:projquotients} and the arguments above one may show that, for every $i=h+1,\ldots,j-1$, it holds $\GX(i-h,i-h+1)\simeq \GZ(i,i+1)$. Therefore since $\GZ(0,1)$ and $\GZ(r-1,r)$ are the sink and the source of the $\C^*$-action in $X$ by Step~\ref{step:potaturafixedpointlocus}, we conclude.
\end{proof}

\begin{step2}\label{step:potaturabordism}
	If $\codim \overline{Z^{\pm}(Y)}\geq 2$ for every fixed point component $Y$ of $Z$ with $h<\mu_E(Y)<j$, then the $\C^*$-action on the pruning $X$ is a bordism.  
\end{step2}

\begin{proof}
	For every fixed point connected component $Y$ of $Z$ of weights $h< \mu_E(Y)< j$, using Steps \ref{step:potaturabirational} and \ref{step:potaturafixedpointlocus} we conclude $\codim \overline{X^{\pm}(Y)} \geq 2$, i.e. that the $\C^*$-action on $X$ is a bordism.
\end{proof}

Using Steps \ref{step:potaturanormal}, \ref{step:potaturabirational}, \ref{step:potaturafixedpointlocus}, \ref{step:potaturabtype} 
and \ref{step:potaturabordism} we conclude the proof of Theorem \ref{theorem:Xbordism}. 
Note that the second part of this statement can also be rephrased as follows:

\begin{corollary}
	In the situation of Set-up \ref{setup:bordism}, if $\codim \overline{Z^{\pm}(Y)}\geq 2$ for every fixed point component $Y$ with $h<\mu_E(Y)<j$, then the pruning $X$ is a geometric realization of the small modification $\psi: \GZ(h,h+1)\dashrightarrow \GZ(j,j+1)$.
\end{corollary}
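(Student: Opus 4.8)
The statement is essentially Theorem~\ref{theorem:Xbordism} read through the lens of Definition~\ref{def:geomrealization}, so the plan is to quote the Theorem and then check the one remaining point. By Theorem~\ref{theorem:Xbordism}, under the codimension hypothesis the pruning $X$ is a normal projective variety carrying a $\C^*$-action which is a bordism --- in particular of B-type --- and whose sink and source are, respectively, $\GZ(h,h+1)$ and $\GZ(j,j+1)$. Comparing with Definition~\ref{def:geomrealization}, the only thing left to verify is that the birational map $\psi_X\colon\GZ(h,h+1)\dashrightarrow\GZ(j,j+1)$ associated to the $\C^*$-action on $(X,L)$ in the sense of Remark~\ref{remark:definitionpsi} coincides with $\psi$, i.e. with the restriction $\GZ(h,h+1)\dashrightarrow\GZ(j,j+1)$ of the chain of birational maps associated to the $\C^*$-action on $(Z,E)$.

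To this end I would first record the relevant identifications. After indexing the critical values of the $\C^*$-action on $X$ by those of $Z$ (legitimate by Steps~\ref{step:potaturafixedpointlocus} and~\ref{step:potaturabtype}, which show that the inner fixed components of $X$ are exactly the inner fixed components of $Z$ of weights $a_{h+1},\dots,a_j$), one has isomorphisms $\GX(i,i+1)\simeq\GZ(i,i+1)$ for every $h\le i\le j$: for the inner indices this is proved inside Step~\ref{step:potaturabtype}, and for $i=h$ (resp. $i=j$) it follows from B-type together with the fact that the sink (resp. source) of $X$ is $\GZ(h,h+1)$ (resp. $\GZ(j,j+1)$). Moreover, via Remark~\ref{remark:projquotients}, each such quotient is the $\Proj$ of the weight-$m\tau$ subalgebra of $R(X;L)$ (resp. of $R(Z;E)$) for a rational $\tau\in(a_i,a_{i+1})$, and --- after replacing $E$ by a suitable multiple as in the reduction before Step~\ref{step:potaturaextremalquotients} --- these subalgebras literally coincide, because for such $\tau$ one has $m\rho_-<m\tau<m\rho_+$, so that the weight-$m\tau$ global sections of $mE$ computed on $X$ and on $Z$ agree. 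Thus all the identifications $\GX(i,i+1)\simeq\GZ(i,i+1)$, $h\le i\le j$, are induced by a single $\C^*$-equivariant birational map $Z\dashrightarrow X$, which by Step~\ref{step:potaturabirational} restricts to an isomorphism over the dense $\C^*$-invariant open subset $U$ of Step~\ref{step:potaturanormal}.

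Finally I would assemble these facts. By Remark~\ref{remark:definitionpsi}, $\psi$ (resp. $\psi_X$) is the birational identification induced by the inclusion of the dense $\C^*$-invariant open subset $\bigcap_{h\le i\le j}Z^{\st}(i,i+1)$ (resp. $\bigcap_{h\le i\le j}X^{\st}(i,i+1)$) into each of the quotients $\GZ(i,i+1)$ (resp. $\GX(i,i+1)$), $h\le i\le j$. One checks that $\bigcap_{h\le i\le j}X^{\st}(i,i+1)$ is contained in $U$ --- it avoids $X^+(Y_-)$ and $X^-(Y_+)$, hence the open sets $U_\pm$ of Step~\ref{step:potaturanormal}, hence $X\setminus U$ --- and that under the $\C^*$-equivariant isomorphism $U\subset X\;\xrightarrow{\;\sim\;}\;U\subset Z$ it is carried onto the corresponding open subset of $Z$, compatibly with the quotient identifications of the previous paragraph. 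Therefore the same open set, with its $\C^*$-quotient, maps compatibly and birationally onto every $\GX(i,i+1)$ and every $\GZ(i,i+1)$ for $h\le i\le j$, so the two composite birational maps $\psi_X$ and $\psi$ both restrict to the identity on it, whence $\psi_X=\psi$ under $\GX(h,h+1)\simeq\GZ(h,h+1)$ and $\GX(j,j+1)\simeq\GZ(j,j+1)$. The degenerate case $h=j$, in which $U$ may be empty, is handled directly: as noted after Definition~\ref{def:pruning}, $X$ is then a $\P^1$-bundle over $\GZ(h,h+1)$ with the fibrewise action, a bordism whose sink and source are two sections and whose induced map is the identity, matching $\psi$ (which in this case is the identity of $\GZ(h,h+1)$).

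\textbf{Main obstacle.} The geometric content is entirely in Theorem~\ref{theorem:Xbordism}; the only real work is the bookkeeping in the last paragraph, namely making precise that the chain of GIT identifications defining $\psi_X$ on $X$ matches, index by index, the one defining $\psi$ on $Z$, including the containment $\bigcap_{h\le i\le j}X^{\st}(i,i+1)\subset U$ and the two boundary indices $i=h,j$; everything else is formal.
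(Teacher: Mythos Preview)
Your proposal is correct in substance, and considerably more detailed than what the paper provides: the paper gives no proof at all, presenting the Corollary merely as a rephrasing of the second part of Theorem~\ref{theorem:Xbordism}. The extra verification you carry out --- that the natural map $\psi_X$ associated to the $\C^*$-action on $X$ coincides with the restriction $\GZ(h,h+1)\dashrightarrow\GZ(j,j+1)$ of the chain on $Z$ --- is exactly the point that the definition of geometric realization requires and that the paper leaves implicit, and your strategy via the $\C^*$-equivariant open set $U$ of Step~\ref{step:potaturanormal} is the right one.

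One small imprecision: the containment $\bigcap_{h\le i\le j}X^{\st}(i,i+1)\subset U$ is not established by the sentence you give. Since $Y_-$ is the sink, $X^+(Y_-)=Y_-$ (and likewise $X^-(Y_+)=Y_+$), so avoiding $X^+(Y_-)$ and $X^-(Y_+)$ only means avoiding the two points $Y_\pm$, not the full open neighbourhoods $U_\pm$; a general point of $U_-\setminus Y_-$ lies in every $X^{\st}(i,i+1)$. This does not damage the argument: simply compare $\psi_X$ and $\psi$ on the dense $\C^*$-invariant open set $U\cap\bigcap_i X^{\st}(i,i+1)$ instead. On that set the isomorphism $U\subset X\,\simeq\,U\subset Z$ carries orbits to orbits and is compatible with each quotient identification $\GX(i,i+1)\simeq\GZ(i,i+1)$, so the two birational maps agree there and hence everywhere.
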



\section{From MDBs to bordisms}\label{sec:MDR2}

The purpose of this section is to construct a geometric realization 
of the small modification $\phi: Y_-\dashrightarrow Y_+$ associated to an MDB $(L_-,L_+)$ (recall Definition \ref{def:MDRtype}). More precisely, we show the following:

\begin{theorem}\label{thm:MDR2}
	Let $Y_-$ be a normal projective variety, and let $(L_-,L_+)$ be an MDB on $Y_-$. We denote by $\phi:Y_-\dashrightarrow Y_+$ the associated small modification. Then there exists a geometric realization of $\phi$, whose induced $\C^*$-action is a bordism.
\end{theorem}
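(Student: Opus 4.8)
My plan is to build the variety $X$ as a pruning, in the sense of Definition \ref{def:pruning}, of an auxiliary $\C^*$-action constructed directly from the multisection ring of the MDP. First I would form the $\C$-algebra $R := R(Y_-; L_-, L_+)$, which is finitely generated since $(L_-, L_+)$ is an MDP, and set $W := \Spec(R)$. The $\N^2$-grading of $R$ gives a $(\C^*)^2$-action on $W$; composing with a suitably chosen one-parameter subgroup $\C^* \hookrightarrow (\C^*)^2$ — concretely, the subtorus acting with weight $0$ on $L_-$-degree and weight $1$ on $L_+$-degree, or a rational perturbation thereof — produces a $\C^*$-action. Then I would take an appropriate projectivization: choosing the ample class $L_-$ (more precisely $L_- \oplus L_+$, i.e.\ grading $R$ by total degree $m_1 + m_2$), form $Z := \Proj R$ with its induced $\C^*$-action and polarization $E$. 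The sink and source of this $\C^*$-action on $Z$ are, by Remark \ref{remark:projquotients} applied to the extremal weights, exactly $\Proj R(Y_-; L_-) = Y_-$ and $\Proj R(Y_-; L_+) = Y_+$, so the birational map induced by the $\C^*$-action (Remark \ref{remark:definitionpsi}) is $\phi$. This already gives a geometric realization of $\phi$ of B-type, provided the extremal quotients genuinely coincide with $Y_\pm$ — which needs the MDP hypotheses (ampleness of $L_-$ and that $\phi$ is a small modification).

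Next I would invoke Theorem \ref{theorem:Xbordism} — the pruning construction — to upgrade this to a bordism. The point is that $Z$ as constructed may well carry inner $\C^*$-invariant divisors (for instance coming from isolated extremal fixed points, as the discussion before Set-up \ref{setup:bordism} warns), so it need not be a bordism on the nose. I would choose rational numbers $\rho_- \in (a_0, a_1)$ and $\rho_+ \in (a_{r-1}, a_r)$ just inside the extremal critical values, and let $X$ be the pruning of $(Z, E)$ with respect to $\rho_-, \rho_+$. By Theorem \ref{theorem:Xbordism}, $X$ is a normal projective variety with a B-type $\C^*$-action, $\C^*$-equivariantly birational to $Z$, whose sink and source are $\GZ(0,1)$ and $\GZ(r-1,r)$ — which by B-type are $Y_-$ and $Y_+$ — and whose induced birational map is still $\phi$. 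To conclude $X$ is a bordism I must verify the codimension hypothesis of Theorem \ref{theorem:Xbordism}: $\codim \overline{Z^\pm(Y)} \geq 2$ for every inner fixed component $Y$. This is where the MDR hypothesis (finite generation of the full cone $\langle L_-, L_+\rangle$, not just the two rays) does the essential work: by Remark \ref{remark:bordismiffadmissible} and Corollary \ref{coroll:bordismiffadmissible} it suffices that all intermediate geometric quotients $\GZ(i,i+1)$ are admissible, and admissibility of the chain is controlled by the finite generation of the multisection ring together with the fact that $\phi$ itself is a small modification — any invariant divisor contracted by the quotient map would have to map to a divisor in the indeterminacy locus of $\phi$, contradicting that $\phi$ is an isomorphism in codimension one.

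The main obstacle, and the step requiring the most care, is precisely this last verification: showing that the inner Bia\l ynicki-Birula cells of the $\Proj$ of the multisection ring have codimension at least two. The subtlety is that finite generation of $R(Y_-; L_-, L_+)$ as an abstract $\C$-algebra is a statement about the two extremal rays, whereas admissibility of the intermediate quotients is a statement about all rational directions in the cone $\langle L_-, L_+\rangle$; the MDR condition is exactly what bridges this gap, but one must translate "the multisection ring is finitely generated" into "every intermediate GIT chamber gives a quotient differing from $Y_-$ only in codimension $\geq 2$." I would argue this by relating $\GZ(i,i+1)$, via Remark \ref{remark:projquotients}, to $\Proj$ of the $L_-$-ample subring weighted toward an intermediate class $L_- + t(L_+ - L_-)$, $t \in (0,1)\cap\Q$, and observing that for such $t$ the natural map from $Y_-$ to this $\Proj$ is an isomorphism in codimension one because it factors the small modification $\phi = $ (the $t=1$ map); a divisorial contraction at an intermediate $t$ would propagate to a divisorial contraction at $t=1$ by Lemma \ref{lemma:admissibleinterval}-type monotonicity, contradicting smallness of $\phi$. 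Once admissibility of the whole chain is in hand, Corollary \ref{coroll:bordismiffadmissible} and Theorem \ref{theorem:Xbordism} finish the proof. A secondary technical point is checking the projective-normality and very-ampleness hypotheses needed to feed the construction into Lemma \ref{lemma:fingenS2}, which is handled routinely by replacing $E$ with a large multiple, as permitted by Remark \ref{rem:pruning}.
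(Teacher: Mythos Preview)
Your variety $Z=\Proj R$ (graded by total degree) is exactly the paper's $X^\alpha$ for the choice $\alpha_-=\alpha_+=1$, so the construction you propose is the same object the paper builds. The divergence is in how you argue that this object is a bordism. You route through pruning (Theorem~\ref{theorem:Xbordism}), but notice that the hypothesis of that theorem --- $\codim\overline{Z^{\pm}(Y)}\ge 2$ for every inner $Y$ --- is precisely the bordism condition you are trying to establish (given B-type). So the pruning step is a no-op: with $\rho_-\in(a_0,a_1)$ and $\rho_+\in(a_{r-1},a_r)$ the pruned variety keeps all inner components of $Z$, and you still owe the codimension estimate directly on $Z$.

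Your proposed verification of that estimate has a gap. You argue that a codimension-one inner cell would, by Lemma~\ref{lemma:admissibleinterval}, force one of the extremal geometric quotients to be non-admissible, and that this ``contradicts smallness of $\phi$''. But non-admissibility of $\GZ(0,1)$ (say) means only that $Z\setminus(Z^{\st}(0,1)\cup Y_\pm)$ contains a divisor \emph{in $Z$}; it does not say anything about the indeterminacy locus of $\phi$ inside $Y_-$, which is what smallness controls. A divisorial inner cell $\overline{Z^-(Y')}$ lies entirely outside $Z^{\st}(0,1)$, so it does not even meet the domain of the quotient map to $Y_-$, and there is no direct way to push it down to a divisor on $Y_-$. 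Likewise your B-type claim for $Z$ is asserted rather than proved: identifying $\GZ(0,0)\simeq Y_-$ via $\Proj R(Y_-;L_-)$ gives the extremal \emph{semigeometric} quotient, not the geometric one $\GZ(0,1)$.

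The paper avoids all of this with one idea you are missing: it compares $X^\alpha$ with the $\P^1$-bundle $P^\alpha=\P_{Y_-}(\alpha_+L_-\oplus\alpha_-L_+)$. A short computation (Lemma~\ref{lemma:PbirationaXalpha}) shows $R(P^\alpha;\cO_{P^\alpha}(1))$ is a Veronese of $A^\alpha$, so $\Phi:P^\alpha\dashrightarrow X^\alpha$ is birational and $\C^*$-equivariant. This single comparison delivers everything: normality of $X^\alpha$; B-type, because $\Phi$ restricts to an isomorphism on a $\C^*$-invariant neighbourhood of each section $s_\pm(Y_-)$ (using ampleness of $L_-$ and, on the other side, of $L_+$ on $Y_+$); and the bordism property, because any $\C^*$-invariant divisor on $X^\alpha$ other than $Y_\pm$ would strict-transform to a $\C^*$-invariant divisor on $P^\alpha$, where the only such divisors are the two sections. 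No pruning, no admissibility chain-chasing.
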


\subsection{Construction of the geometric realization}\label{ssec:construction}

For the sake of notation, in this section we will denote by 
$$A:=R(Y_-;L_-,L_+)=\bigoplus_{m_\pm\geq 0} \HH^0(Y_-,m_-L_-+m_+L_+)$$
the finitely generated $\C$-algebra associated to the MDB $(L_-,L_+)$.
The group $$H:=\Hom(\Z(L_-,L_+),\C^*)$$ is a $2$-dimensional complex torus, whose character lattice $\Mo(H)$ is $\Z(L_-,L_+)$, and that acts naturally on the finitely generated $\C$-algebra $A$. Let $\alpha\in \Mo(H)^\vee$ be a $1$-parametric subgroup, such that $\alpha_{\pm}:=\alpha(L_{\pm})>0$; we assume, for simplicity, that $\alpha_{\pm}$ are coprime, and denote by $H'\subset H$ the associated $1$-dimensional subtorus. 
We will later show (see Remark \ref{remark:birational1ps}) that different choices of $\alpha\in \Mo(H)^\vee$ yield birationally equivalent geometric realizations. 

\begin{remark}
	The $1$-dimensional torus $H'$ acts on $\Spec A$, inducing an $\N$-grading on $A$. The algebra $A$ endowed with this grading will be denoted by $A^\alpha$: 
	$$A^\alpha:=\bigoplus_{m\geq 0} A^\alpha_m, \hspace{0.3cm} \text{ where } \hspace{0.3cm} A^\alpha_m:=\bigoplus_{\substack{m_\pm\in\Z_{\geq 0}\\\alpha(m_-L_-+m_+L_+)=m}} \HH^0(Y_-,m_-L_-+m_+L_+).$$
\end{remark}

\begin{definition}\label{def:Xalpha}
	The $\N$-graded algebra $A^{\alpha}$ is finitely generated by assumption, so we may define $X^\alpha:=\Proj A^\alpha$.
\end{definition}

\begin{remark}
	Given $\alpha\in \Mo(H)^\vee$ and $H'\subset H$ as above, we may consider the $1$-dimensional torus
	$$
	H'':=H/H',
	$$
	which acts on $X^\alpha$. Indeed the $H$-action on $A^\alpha$ induces an $H$-action on $X^\alpha$, whose kernel is precisely $H'$.  
\end{remark}

\begin{remark}\label{rem:LocalCox}
	We notice 
	that $A$ can be thought of as a local version of the Cox ring construction for MDSs, in the following sense: under the usual MDS-like point of view, one considers the affine variety $\Spec(A)$, from which $Y_\pm$ (as well as some other birationally equivalent varieties) can be obtained as geometric quotients by the action of the torus $H$. But all these geometric quotients factor via the projection map $\Spec(A)\dashrightarrow X^\alpha=\Proj(A^\alpha)$, and so the geometric quotients of $\Spec(A)$ via the action of $H$ are geometric quotients of $X^\alpha$ by the action of $H''$; the converse of this is obviously true. We summarize this construction by means of the following diagram:
	
	\adjustbox{scale=1,center}{%
		\begin{tikzcd}
			&  & \Spec A \arrow[d, "/H'", dashed] \arrow[lldd, "/H"', dashed] \arrow[rrdd, "/H", dashed]                    &  &     \\
			&  & X^\alpha=\Proj A^\alpha \arrow[d, "/H''", dashed] \arrow[lld, "/H''", dashed] \arrow[rrd, "/H''"', dashed] &  &     \\
			Y_- \arrow[rr, dashed] &  & {\GX(i,i+1)} \arrow[rr, dashed]                                                                            &  & Y_+
		\end{tikzcd}
	}   
\end{remark}

In the following statements we will use the following $\P^1$-bundle over $Y_-$:
$$P^\alpha:=\P_{Y_-}(\alpha_+L_-\oplus\alpha_-L_+).$$

\begin{lemma}\label{lemma:PbirationaXalpha}
	The projective variety $X^{\alpha}$ is normal, and there exists a birational map $\Phi:P^\alpha\dashrightarrow X^\alpha$. 
\end{lemma}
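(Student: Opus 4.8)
The plan is to exhibit $X^\alpha$ as a normal projective variety by realizing $A^\alpha$ as a subalgebra of a finitely generated normal ring, and then to construct the birational map $\Phi$ by comparing the total coordinate rings of $P^\alpha$ and of $X^\alpha$. First I would address normality. The algebra $A=R(Y_-;L_-,L_+)$ is finitely generated by the MDP hypothesis; to conclude that $\Proj A^\alpha$ is normal it suffices to check that $A$ (hence $A^\alpha$, which is the same ring with a coarser grading) is integrally closed in its field of fractions. This follows from the standard fact that a multisection ring $\bigoplus_{m_\pm\ge 0}\HH^0(Y_-,m_-L_-+m_+L_+)$ of a normal variety is normal: each graded piece is cut out inside the (normal) function field $\C(Y_-)$ by vanishing conditions, and an element of the fraction field integral over $A$ is a rational function whose poles are controlled on $Y_-$, so it already lies in $A$. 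Once $A^\alpha$ is known to be a normal finitely generated $\N$-graded $\C$-algebra, $X^\alpha=\Proj A^\alpha$ is a normal projective variety; normality on the relevant affine charts $\Spec (A^\alpha_{(f)})$ follows exactly as in the proof of Step \ref{step:potaturanormal}.

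\textbf{Construction of $\Phi$.} For the birational map, the key observation is that the $\P^1$-bundle $P^\alpha=\P_{Y_-}(\alpha_+L_-\oplus\alpha_-L_+)$ has a natural $\C^*$-action along the fibers (with weights corresponding to the two summands), whose sink and source are the two sections $\sigma_-,\sigma_+$ isomorphic to $Y_-$, and whose relevant section ring is built from $\HH^0(P^\alpha,\cO(m))=\bigoplus_{a+b=m}\HH^0(Y_-,a\alpha_+L_-+b\alpha_-L_+)$. On the other hand, by construction $A^\alpha_m=\bigoplus \HH^0(Y_-,m_-L_-+m_+L_+)$ where the sum runs over $m_\pm\ge 0$ with $\alpha_-m_-+\alpha_+m_+=m$. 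There is thus a natural $\C^*$-equivariant inclusion (or at least a rational comparison) of multigraded rings sending the $(a,b)$-summand of $\HH^0(P^\alpha,\cO(m))$ into $A^\alpha$, which identifies an open dense subset of $P^\alpha$ with an open dense subset of $X^\alpha$: away from the two sections, a point of $P^\alpha$ is a point of $Y_-$ together with a ratio between the two summands, and this is exactly the data recorded by the $H'$-orbit on the locus of $\Spec A$ where both "coordinates" $L_-,L_+$ are nonzero. Concretely I would exhibit the common open set: on $P^\alpha$ remove $\sigma_-\cup\sigma_+$; on $X^\alpha$ take the image of the locus in $\Spec A$ where some section of a positive combination $m_-L_-+m_+L_+$ with $m_\pm>0$ is nonzero; these two open sets are isomorphic because both are expressed as $\Spec$ of the localization of $\bigoplus_{m_\pm\ge 0}\HH^0(Y_-,m_-L_-+m_+L_+)$ (with appropriate grading) at such an element, mirroring the argument for $U$ in Step \ref{step:potaturanormal}.

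\textbf{Main obstacle.} The subtle point is bookkeeping with the gradings: $P^\alpha$ is naturally bigraded by $(a,b)\in\N^2$ (degrees in the two summands), while $X^\alpha=\Proj A^\alpha$ is singly graded by $m=\alpha_-m_-+\alpha_+m_+$, and the monomial map $(a,b)\mapsto(m_-,m_+)$ relating them is not the identity unless $\alpha_\pm=1$. One must check that, after passing to a Veronese, the image of the section algebra of $P^\alpha$ in $A$ is large enough to generate the function field of $X^\alpha$ — equivalently, that the sublattice of $\Mo(H)=\Z(L_-,L_+)$ generated by $\{a\alpha_+L_-+b\alpha_-L_+: a,b\ge 0\}$ has finite index and spans, which holds precisely because $\alpha_+$ and $\alpha_-$ are coprime and positive. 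So the heart of the matter is a lattice computation ensuring the two $\Proj$'s share a dense open subset; once that is in place, birationality of $\Phi$ is immediate and normality has already been disposed of. I would therefore structure the proof as: (1) normality of $A$ and hence of $X^\alpha$; (2) the lattice/coprimality check producing a common dense open affine; (3) conclude that $\Phi$ exists and is birational.
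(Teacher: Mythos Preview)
Your proposal is correct and uses the same key ingredients as the paper --- normality of multisection rings on a normal variety, and the coprimality of $\alpha_-,\alpha_+$ to control the lattice --- but the paper packages the birationality step more directly. Instead of hunting for a common open chart, the paper simply computes the section ring of the tautological bundle:
\[
R\bigl(P^\alpha;\cO_{P^\alpha}(1)\bigr)=\bigoplus_{m\ge 0}\bigoplus_{m_-+m_+=m}\HH^0\bigl(Y_-,m_-\alpha_+L_-+m_+\alpha_-L_+\bigr),
\]
and observes that the condition $\alpha(m_-\alpha_+L_-+m_+\alpha_-L_+)=m\alpha_-\alpha_+$, together with $\gcd(\alpha_-,\alpha_+)=1$, forces this to be \emph{exactly} the $(\alpha_-\alpha_+)$-Veronese of $A^\alpha$ (not merely a finite-index subring, as you more cautiously phrase it). This yields at once $\Proj R(P^\alpha;\cO_{P^\alpha}(1))\simeq X^\alpha$; normality then drops out since this is the section ring of a line bundle on the normal variety $P^\alpha$, and birationality follows because $\cO_{P^\alpha}(1)$ is big, so $\Phi:=\varphi_{\cO_{P^\alpha}(1)}$ is the birational map sought. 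Your chart-by-chart comparison would certainly work, but the paper's identification of the whole section ring with a Veronese avoids that bookkeeping entirely and gives the map $\Phi$ for free as the Iitaka-type map of a big line bundle.
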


\begin{proof} 
Let us consider the ring of sections of the tautological line bundle $\cO_{P^{\alpha}}(1)$. We have:
\begin{equation*}
\begin{split}
R(P^{\alpha};\cO_{P^{\alpha}}(1))& = \bigoplus_{m\geq 0} \bigoplus_{m_-+m_+=m} \HH^0(Y_-,m_-\alpha_+L_-+m_+\alpha_-L_+)\\
			&=\bigoplus_{m\geq 0} \bigoplus_{\alpha(m_-\alpha_+L_-+m_+\alpha_-L_+)=m\alpha_-\alpha_+} \HH^0(Y_-,m_-\alpha_+L_-+m_+\alpha_-L_+)\\
			&=\bigoplus_{m\geq 0} \bigoplus_{\alpha(m_-L_-+m_+L_+)=m\alpha_-\alpha_+} \HH^0(Y_-,m_-L_-+m_+L_+),
\end{split}
\end{equation*}
where the last equality follows from the fact that $\alpha_-,\alpha_+$ are coprime. But this $\C$-algebra is precisely the $(\alpha_-\alpha_+)$-Veronese of $A^\alpha$, which is finitely generated.
It follows that $R(P^{\alpha};\cO_{P^{\alpha}}(1))$ is finitely generated, and that 
$$
\Proj\left(R(P^{\alpha};\cO_{P^{\alpha}}(1))\right)\simeq X^\alpha.
$$
In particular (see for instance \cite[Lemma~7.10]{De}) $X^\alpha$ is normal. Furthermore, the line bundle $\cO_{P^{\alpha}}(1)$ is big (see \cite[Lemma~2.3.2.~(iv)]{L2}), hence the associated rational map $\Phi:=\varphi_{\cO_{P^{\alpha}}(1)}:P^{\alpha}\dashrightarrow X^\alpha$ is birational.
\end{proof}

\begin{remark} \label{rem:equivariant}
	Notice that the $\P^1$-bundle $P^\alpha$ admits a natural equalized $H''$-action with fixed point locus $s_-(Y_-)\sqcup s_+(Y_-)$, where $s_-,s_+$ are the sections of $P^\alpha$ over $Y_-$ corresponding respectively to the projections of $\alpha_+ L_- \oplus \alpha_- L_+\to \alpha_+ L_-$, $\alpha_+ L_- \oplus \alpha_- L_+\to \alpha_- L_+$.
	Moreover the birational map $\Phi: P^\alpha\dashrightarrow X^\alpha$ introduced in Lemma \ref{lemma:PbirationaXalpha} is $H''$-equivariant. 
\end{remark}

\begin{proposition}\label{proposition:X^alphageometricrealization}
	The action of $H'' \simeq  \C^*$ on $X^{\alpha}$ is B-type with sink and source $Y_-$ and $Y_+$,  respectively, and the induced natural birational map $\psi:Y_-\dashrightarrow Y_+$ coincides with  the small modification $\phi\colon Y_-\dashrightarrow Y_+$.
\end{proposition}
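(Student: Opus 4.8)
The plan is to polarize $X^\alpha$ and then compute all of its GIT quotients by matching them, through Remark \ref{remark:projquotients}, with section rings of divisors on $Y_-$ --- that is, to realize the present construction as the Cox-ring/VGIT picture of Remark \ref{rem:LocalCox}. Since $A^\alpha$ is finitely generated (Definition \ref{def:Xalpha}), choose $d'\gg 0$, divisible by $\alpha_-\alpha_+$, so that the Veronese $A^{\alpha,(d')}=\bigoplus_{m\geq 0} A^\alpha_{md'}$ is generated in degree $1$ and the embedding $X^\alpha\subset\P(A^\alpha_{d'})$ is projectively normal; put $L:=\cO_{X^\alpha}(d')$, which is $\C^*$-linearizable since $X^\alpha$ is normal by Lemma \ref{lemma:PbirationaXalpha}, so that $\HH^0(X^\alpha,mL)=A^\alpha_{md'}$ for every $m$. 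From now on we work with the polarized pair $(X^\alpha,L)$ and the induced $H''\simeq\C^*$-action.

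The first step is the weight bookkeeping. The sublattice $\Mo(H'')\subset\Mo(H)=\Z\langle L_-,L_+\rangle$ is the annihilator of $\alpha$, generated by the primitive vector $\alpha_+L_--\alpha_-L_+$ (primitivity uses coprimality of $\alpha_\pm$). With a suitable $\C^*$-linearization of $L$, the $H''$-weight of the summand $\HH^0(Y_-,m_-L_-+m_+L_+)$ of $A^\alpha_{md'}$ records the position of the lattice point $(m_-,m_+)$ along the one-dimensional slice $\{m_-\alpha_-+m_+\alpha_+=md'\}$, an interval of lattice points whose consecutive points differ by $\alpha_+L_--\alpha_-L_+$. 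Hence, for rational $\tau$ strictly between two consecutive critical values, $\HH^0(X^\alpha,mL)_{m\tau}$ is a single such summand $\HH^0(Y_-,m\,\xi(\tau))$ for a class $\xi(\tau)\in\langle L_-,L_+\rangle_\Q$ depending only on $\tau$ (here $m$ ranges over a suitable arithmetic progression), with $\xi(\tau)$ tending to the ray of $L_-$ as $\tau\to a_0$ and to the ray of $L_+$ as $\tau\to a_r$. By Remark \ref{remark:projquotients},
$$
\GX(i,i+1)=\Proj\bigoplus_{m}\HH^0(Y_-,m\,\xi(\tau)),
$$
which is the birational model of $Y_-$ attached to the Mori chamber of $\langle L_-,L_+\rangle$ containing $\xi(\tau)$; in the same way the extremal semigeometric quotients are $\GX(0,0)=\Proj R(Y_-;L_-)=Y_-$ and $\GX(r,r)=\Proj R(Y_-;L_+)=Y_+$.

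Now I read off the statement. Since $L_-$ is ample, its Mori chamber is a neighbourhood of $L_-$, so for $\tau\in(a_0,a_1)$ the class $\xi(\tau)$ still lies in it and $\GX(0,1)=Y_-$; dually, $L_+$ is the strict transform of an ample class on $Y_+$, hence interior to its Mori chamber, so the chamber of $\langle L_-,L_+\rangle$ adjacent to the $L_+$-ray produces $Y_+$ and $\GX(r-1,r)=Y_+$. Therefore the natural morphisms $\GX(0,1)\to\GX(0,0)\to Y_-$ and $\GX(r-1,r)\to\GX(r,r)\to Y_+$ are isomorphisms, i.e. the action is of B-type with sink $Y_-$ and source $Y_+$. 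Finally, the birational map $\psi$ of Remark \ref{remark:definitionpsi} is the composition of the wall-crossings among $\GX(0,1),\dots,\GX(r-1,r)$, hence the canonical small modification from the chamber of $L_-$ to that of $L_+$; being the identity on the dense open subset of $Y_-$ over which all these models agree, it coincides with $\phi=\varphi_{R(Y_-;L_+)}$.

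The main obstacle I expect is the middle step: proving that, under $\tau\mapsto\xi(\tau)$, the subdivision of $(a_0,a_r)$ by the critical values of the $\C^*$-action matches the Mori chamber decomposition of $\langle L_-,L_+\rangle$, and in particular that the two end quotients are $Y_-$ and $Y_+$ \emph{on the nose} --- this is exactly where ampleness of $L_-$ and the defining property of $L_+$ are used. As an independent check of the sink and the source, one may instead use the $\C^*$-equivariant birational map $\Phi:P^\alpha\dashrightarrow X^\alpha$ of Remark \ref{rem:equivariant}: restricting sections of $\cO_{P^\alpha}(\bullet)$ to the two invariant sections shows that $\Phi$ is the identity on $s_-(Y_-)$ (where $\cO_{P^\alpha}(1)$ restricts to the ample $\alpha_+L_-$) and equals $\phi$ on $s_+(Y_-)$ (where it restricts to $\alpha_-L_+$); since $\cO_{P^\alpha}(1)$ is ample on a $\C^*$-invariant neighbourhood of $s_-(Y_-)$, the map $\Phi$ is an isomorphism there, and $\C^*$-equivariance then forces the sink of $X^\alpha$ to be $\Phi(s_-(Y_-))\cong Y_-$ and the source to be $\overline{\Phi(s_+(Y_-))}=Y_+$.
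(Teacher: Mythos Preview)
Your approach differs genuinely from the paper's. The paper works geometrically with the $\C^*$-equivariant birational map $\Phi\colon P^\alpha\dashrightarrow X^\alpha$ of Lemma~\ref{lemma:PbirationaXalpha}: since $L_-$ is ample, $\Phi$ is defined along $s_-(Y_-)$, and $\C^*$-invariance of the indeterminacy locus forces it into $s_+(Y_-)$; hence $\Phi$ restricts to an isomorphism from $P^\alpha\setminus s_+(Y_-)$ onto a $\C^*$-invariant neighbourhood of the sink, identifying the sink with $Y_-$ and with $\GX(0,1)$ simultaneously. The source is handled symmetrically via the $\P^1$-bundle $\widetilde{P^\alpha}$ over $Y_+$. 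You instead compute the (semi)geometric quotients algebraically through Remark~\ref{remark:projquotients} as $\Proj R(Y_-;\xi(\tau))$ and use openness of ampleness near the rays of $L_\pm$; this is the VGIT viewpoint of Remark~\ref{rem:LocalCox} and has the bonus of identifying every intermediate quotient, effectively anticipating Theorem~\ref{theorem:decomposition}. The ``main obstacle'' you flag---matching the full chamber decomposition with the critical values---is in fact not needed for this proposition; only the behaviour near the two extremal rays matters.

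There is, however, a real gap in your main line. Showing $\GX(0,0)\cong\GX(0,1)\cong Y_-$ as abstract varieties does not yet give that the composite $\GX(0,1)\to\GX(0,0)\to(\text{sink})$ of Definition~\ref{def:btype} is an isomorphism, nor that the sink is $Y_-$: Definition~\ref{def:semigeometricqquotient} only asserts that $\GX(0,0)\to(\text{sink})$ is \emph{bijective}, and you have not identified the sink as a closed subscheme of $X^\alpha$. One fix is to argue as in Step~\ref{step:potaturafixedpointlocus}, intersecting $X^\alpha\subset\P(A^\alpha_{d'})$ with the minimal-weight linear subspace $\P\big((A^\alpha_{d'})_{a_0}\big)$ and checking that this is scheme-theoretically the projectively normal image of $Y_-$ under a high multiple of $L_-$. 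Your final paragraph supplies the other fix---and it is exactly the paper's argument---so the proof is ultimately complete, but note that the step you present as an independent sanity check is the one actually pinning down the sink and source.
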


\begin{proof} 
	Consider the birational map $\Phi:P^\alpha\dashrightarrow X^\alpha$ introduced in Lemma \ref{lemma:PbirationaXalpha}. We first prove that the indeterminacy locus of $\Phi$ is contained in $s_+(Y_-)$. 
	We recall that $L_-$ is ample on $Y_-$, thus $\Phi_{\mid s_-(Y_-)}$ is well defined; and since by Remark \ref{rem:equivariant} we know that $\Phi$ is $\C^*$-equivariant, it follows that the indeterminacy locus of $\Phi$  is $H''$-invariant, hence our claim. Therefore, $\Phi_{\mid P^{\alpha}\setminus s_+(Y_-)}\colon P^{\alpha}\setminus s_+(Y_-) \to \mathcal{U}_-\subset X^{\alpha}$ is an isomorphism where $\mathcal{U}_-$ is a $\C^*$-invariant neighborhood of the sink of the action on $X^\alpha$; it follows that the sink of the $\C^*$-action on $X^\alpha$ is $s_-(Y_-)  \simeq  Y_-$ and is isomorphic to the first geometric quotient of such action. 
	
	In order to conclude that the $\C^*$-action on $X^\alpha$ is of B-type, we study an $H''$-invariant neighborhood $\mathcal{U}_+$ of the source of $X^\alpha$. To do so, we consider the $\P^1$-bundle $\widetilde{P^{\alpha}}:=\P_{Y_+}(\alpha_+ L_- \oplus \alpha_- L_+)$ on $Y_+$ and show, in a similar way as above, that we can find a neighborhood $\mathcal{U}_+$ isomorphic to the complement of a section of $\widetilde{P^{\alpha}}$.
	On the other hand, using the arguments above and the construction of the natural birational map $\psi$ among the extremal geometric quotients it follows that $\psi$ coincides with the small modification $\phi$ associated to the MDB $(L_-,L_+)$. 
\end{proof}

\begin{lemma}\label{lemma:Xalphabordism}
	The $H''$-action on $X^\alpha$ is a bordism.
\end{lemma}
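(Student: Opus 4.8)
The plan is to use the structural results already established: by Proposition \ref{proposition:X^alphageometricrealization} the $H''$-action on $X^\alpha$ is of B-type with sink $Y_-$ and source $Y_+$, so by Corollary \ref{coroll:bordismiffadmissible} it suffices to show that every geometric quotient of $(X^\alpha, L)$ is admissible; equivalently, by Remark \ref{remark:bordismiffadmissible}, that $\codim \overline{(X^\alpha)^{\pm}(Y)} \geq 2$ for every inner fixed component $Y \in \cY^\circ$. The key tool is the birational comparison $\Phi: P^\alpha \dashrightarrow X^\alpha$ of Lemma \ref{lemma:PbirationaXalpha}, together with the identification (from the proof of Proposition \ref{proposition:X^alphageometricrealization}) of $\C^*$-invariant neighborhoods $\mathcal{U}_\pm$ of the sink and source of $X^\alpha$ with the complements of sections in the $\P^1$-bundles $P^\alpha = \P_{Y_-}(\alpha_+ L_- \oplus \alpha_- L_+)$ over $Y_-$ and $\widetilde{P^\alpha} = \P_{Y_+}(\alpha_+ L_- \oplus \alpha_- L_+)$ over $Y_+$.

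First I would locate the $\C^*$-invariant divisors of $X^\alpha$. By Lemma \ref{lemma:extensioninvariantdivisors}, once we know the action is a bordism the only invariant divisors are $Y_\pm$ and the extensions $e_i(E)$; but here I want to argue the other direction. An irreducible $\C^*$-invariant divisor $D$ on $X^\alpha$ is either pointwise fixed — hence, being a divisor, it must meet the dense cell $\mathcal{U}_-$ or $\mathcal{U}_+$ and so corresponds to a pointwise-fixed divisor in $P^\alpha$ or $\widetilde{P^\alpha}$, i.e. to a section over $Y_-$ or $Y_+$, which forces $D = s_-(Y_-) \simeq Y_-$ or $D = Y_+$ — or $D$ is not pointwise fixed, in which case $D$ is swept out by an $(n-2)$-dimensional family of one-dimensional orbits whose general member $\C^* p$ has sink in some inner component $Y_1$ and source in some inner component $Y_2$. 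In the latter case $D \subset \overline{(X^\alpha)^-(Y_1)} \cap \overline{(X^\alpha)^+(Y_2)}$, so it suffices to rule out divisorial closures $\overline{(X^\alpha)^{\pm}(Y)}$ for inner $Y$.

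To do this, I would pull back to $P^\alpha$. The $\P^1$-bundle $P^\alpha$ has fixed locus exactly $s_-(Y_-) \sqcup s_+(Y_-)$ with no inner fixed components, and its Bia\l ynicki--Birula cells are the complements of the two sections; in particular, over the open set $\mathcal{U}_-$ the quotient $X^\alpha$ is isomorphic to $P^\alpha \setminus s_+(Y_-)$, whose only $\C^*$-invariant divisor is $s_-(Y_-)$. The point is that the inner fixed components of $X^\alpha$ (of weights $a_{h+1},\dots,a_j$ in the notation parallel to Section \ref{sec:constructionbordism}) all lie in the indeterminacy region of $\Phi$, i.e. over the section $s_+(Y_-)$, which has codimension $\geq 2$ in the relevant sense once we note it maps isomorphically onto $Y_-$ while the fibers of $X^\alpha \to Y_-$ over points of the indeterminacy locus are positive-dimensional. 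Concretely: the cell $\overline{(X^\alpha)^+(Y)}$ for an inner $Y$ is disjoint from $\mathcal{U}_- \cong P^\alpha \setminus s_+(Y_-)$ except along a set contained in $\{s_- = 0\}$-type locus, hence has codimension $\geq 2$; symmetrically for $\overline{(X^\alpha)^-(Y)}$ using $\mathcal{U}_+$ and $\widetilde{P^\alpha}$. This is essentially the argument that makes Theorem \ref{theorem:Xbordism} work, applied here with $Z$ replaced by the smooth $\P^1$-bundle $P^\alpha$ and the pruning interval chosen so that $\rho_\pm$ lie in the two outermost open chambers.

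The main obstacle I anticipate is precisely this codimension bookkeeping: one must check carefully that no inner Bia\l ynicki--Birula cell of $X^\alpha$ acquires a divisorial closure. Since $X^\alpha$ need not be smooth, I cannot invoke the clean affine-bundle structure of the BB decomposition; instead I would argue via the covering $X^\alpha = \mathcal{U}_- \cup \mathcal{U}_+$ (up to a set contained in the inner fixed locus, which already has large codimension) plus the fact that on each $\mathcal{U}_\pm$ the variety is an open subset of a $\P^1$-bundle over a normal variety with no inner fixed components, so there the only invariant divisor is the relevant section. An invariant divisor whose general point lies over an inner fixed component would have to be contained in $X^\alpha \setminus (\mathcal{U}_- \cup \mathcal{U}_+)$; tracing through $\Phi$ shows this locus sits inside the proper transform of $s_+(Y_-)$, of codimension at least $2$, giving the contradiction. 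With the invariant divisors pinned down to $Y_\pm$, Corollary \ref{coroll:bordismiffadmissible} (via Remark \ref{remark:bordismiffadmissible} and Lemma \ref{lemma:admissibleinterval}/Corollary \ref{cor:admissiblesinksource}) closes the argument.
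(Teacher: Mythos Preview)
Your overall idea --- compare $X^\alpha$ to the $\P^1$-bundle $P^\alpha$ via the equivariant birational map $\Phi$, and exploit that $P^\alpha$ has no inner fixed components --- is exactly the paper's idea. But the paper's execution is drastically shorter and avoids the very ``codimension bookkeeping'' you flag as the obstacle. The paper simply invokes \cite[Lemma~7.10]{De}: for a birational map between normal projective varieties there exist open subsets $V\subset P^\alpha$ and $U\subset X^\alpha$, the latter with complement of codimension $\geq 2$, on which $\Phi$ restricts to an isomorphism. Once you have this, any $\C^*$-invariant prime divisor $D$ on $X^\alpha$ different from $Y_\pm$ meets $U$, hence has a well-defined $\C^*$-invariant strict transform in $P^\alpha$; since the only $\C^*$-invariant prime divisors of $P^\alpha$ are the two sections and pullbacks from $Y_-$ (none of which lies in the closure of a BB-cell of an inner component), no such ``bad'' $D$ can exist, and the bordism condition follows.

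Your route through the explicit neighborhoods $\mathcal U_\pm$ tries to reproduce this by hand, but the step where it stalls is precisely the one the cited lemma handles: you assert that a divisor ``must meet the dense cell $\mathcal U_-$ or $\mathcal U_+$'', and later that $X^\alpha\setminus(\mathcal U_-\cup\mathcal U_+)$ ``sits inside the proper transform of $s_+(Y_-)$, of codimension at least $2$''. Density alone does not force a divisor to meet an open set, and ``proper transform of $s_+(Y_-)$'' is not well-defined since $\Phi$ is undefined along $s_+(Y_-)$; what you actually need is that the complement of the isomorphism locus in $X^\alpha$ has codimension $\geq 2$, and that is exactly the content of the lemma. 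So your argument is not wrong in spirit, but the missing ingredient is this standard normality fact; once you insert it, the detour through $\mathcal U_\pm$, $\widetilde{P^\alpha}$, and the case analysis on pointwise-fixed versus orbit-swept divisors becomes unnecessary.
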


\begin{proof}
	Thanks to \cite[Lemma 7.10]{De}, there exists an open subset $U$ of $X^\alpha$, whose complement has codimension greater or equal than 2, and an open subset $V$ of $P^\alpha$ on which the $\C^*$-equivariant birational map $\Phi_{\mid V}$ is an isomorphism. 
	
We know that the $H''$-action on $X^\alpha$ is of B-type by Proposition \ref{proposition:X^alphageometricrealization}; saying that it is a bordism is to say that the only $H''$-invariant divisors in $X^\alpha$ that are not extensions of divisors in $Y_-$ are the sink and the source of the action. Let $D$ be an $H''$-invariant prime  divisor on $X^\alpha$ that is not an extension of a divisor in $Y_-$. Since its intersection with $U$ is nonempty, we may consider its strict transform $\ol{D}$ into $P^\alpha$, that will be an $H''$-invariant divisor. Then $\ol{D}$ coincides with the sink or the source of $P^\alpha$, and this implies that $D$ is the sink or the source of $X^\alpha$.
\end{proof}

This concludes the proof of Theorem \ref{thm:MDR2}.

\begin{corollary}
	The $H''$-action on $X^\alpha$ is equalized at the sink $Y_-$ and the source $Y_+$. 
\end{corollary}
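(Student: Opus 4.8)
The plan is to reduce the statement to the equalization of the natural fibrewise $\C^*$-action on the auxiliary $\P^1$-bundles $P^\alpha$ and $\widetilde{P^\alpha}=\P_{Y_+}(\alpha_+L_-\oplus\alpha_-L_+)$, which was recorded in Remark~\ref{rem:equivariant} (and is precisely where the coprimality hypothesis on $\alpha_\pm$ enters). The two facts I would use are: being \emph{equalized at} a fixed component $Y$ is a condition local around $Y$, involving only points of the Bia\l ynicki--Birula cells $X^\pm(Y)$; and the isotropy group of a point is invariant under $\C^*$-equivariant isomorphisms.

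First I would treat the sink. Recall from the proof of Proposition~\ref{proposition:X^alphageometricrealization} that $\Phi$ restricts to a $\C^*$-equivariant isomorphism $P^\alpha\setminus s_+(Y_-)\xrightarrow{\ \sim\ }\mathcal{U}_-$, where $\mathcal{U}_-$ is a $\C^*$-invariant open neighbourhood of the sink of $X^\alpha$ and $s_-(Y_-)$ is carried onto $Y_-$. Since $\mathcal{U}_-$ is open, $\C^*$-invariant and contains $Y_-$, every point whose limit for $t\to 0$ or $t\to\infty$ lands in $Y_-$ must itself lie in $\mathcal{U}_-$, so $X^+(Y_-)\cup X^-(Y_-)\subseteq\mathcal{U}_-$. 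Given $p$ in this union but not in $Y_-$, I would pass to $q:=\Phi^{-1}(p)\in P^\alpha\setminus\bigl(s_-(Y_-)\sqcup s_+(Y_-)\bigr)$, which has the same isotropy group as $p$; transporting the limit of $p$ through the equivariant isomorphism shows $q\in X^+(s_-(Y_-))\cup X^-(s_-(Y_-))$, whence, by the equalization of $P^\alpha$, the isotropy of $q$, and therefore of $p$, is trivial. This is equalization at $Y_-$.

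The source case I would handle identically, using $\widetilde{P^\alpha}$ in place of $P^\alpha$ — its natural fibrewise $\C^*$-action being equalized exactly because $\alpha_-,\alpha_+$ are coprime — together with the $\C^*$-invariant neighbourhood $\mathcal{U}_+$ of the source, isomorphic to $\widetilde{P^\alpha}$ minus a section with $Y_+$ corresponding to the remaining section, that is produced in the proof of Proposition~\ref{proposition:X^alphageometricrealization}. The only place where any real work sits, and hence the main thing to keep honest, is the assertion that these identifications of $\mathcal{U}_\pm$ with open parts of the $\P^1$-bundles are genuinely $\C^*$-equivariant and match the extremal fixed components with the expected sections; but that is precisely the content of the construction carried out in the proof of Proposition~\ref{proposition:X^alphageometricrealization}, so here it only needs to be invoked. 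I do not anticipate any further obstacle.
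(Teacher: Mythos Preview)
Your proposal is correct and follows essentially the same approach as the paper: both deduce equalization at the extremal fixed components from the equalization of the fibrewise $\C^*$-action on the auxiliary $\P^1$-bundle $P^\alpha$ (Remark~\ref{rem:equivariant}) together with the $\C^*$-equivariance of $\Phi$. Your write-up is simply more careful than the paper's one-line proof, in that you explicitly invoke the neighbourhood $\mathcal{U}_+$ and the second bundle $\widetilde{P^\alpha}$ for the source case rather than leaving this implicit.
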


\begin{proof}
	It suffices to notice that thanks to Remark \ref{rem:equivariant} the $\C^*$-action on $P^\alpha$ is equalized at the sink and the source and the birational map $\Phi:P^\alpha\dashrightarrow X^\alpha$ is $\C^*$-equivariant.
\end{proof}

\begin{remark}\label{remark:birational1ps}
	Our construction of a geometric realization depends on the choice of a 1-parameter subgroup $\alpha\in\Mo(H)^\vee$. Given another $\beta\in \Mo(H)^\vee$ as above, the geometric realizations $X^{\alpha}$ and $X^{\beta}$ are birational. Indeed it suffices to notice that the $\P^1$-bundles $P^{\alpha}, P^{\beta}$ are $\C^*$-equivariantly birationally equivalent. However, the geometric quotients of $X^\alpha$ are independent of the choice of $\alpha$.
\end{remark}


\section{From bordisms to MDBs and the induced chamber decomposition}\label{sec:bordismtoMDR}

\begin{setup}\label{setup:bordismtoMDR} 
	Let $(X,L)$ be a polarized pair endowed with a faithful $\C^*$-action, where $X$ is a normal and $\Q$-factorial projective variety, and $L$ is an ample Cartier divisor. Suppose that the $\C^*$-action is a bordism and it is equalized at the sink and the source. As in Section \ref{ssec:BB}, the critical values of the action will be denoted by $a_0,\dots,a_r$ and, without loss of generality, we assume that $\mu_L(Y_-)=0$. We set $\delta:=a_r=\mu_L(Y_+)$. 
\end{setup}

Under Set-up \ref{setup:bordismtoMDR}, in this section we construct an MDB upon the sink and the source. Moreover we prove the existence of a chamber decomposition of such an MDB, where every chamber model is the geometric quotient of the $\C^*$-action on $(X,L)$. Note that bordisms can be easily constructed via the pruning of a  polarized pair $(Z,E)$ with a $\C^*$-action satisfying some mild conditions (see Section \ref{sec:constructionbordism}). 

\begin{lemma}\label{lemma:sections}
	In the situation of Set-up \ref{setup:bordismtoMDR}, let $\tau_{\pm}\in \Q$ be two rational numbers such that $0\leq \tau_- \leq \tau_+ \leq \delta$, and 
	$m\in\Z_{>0}$ such that $m\tau_\pm\in\Z$. It holds that:
	$$\bigoplus_{k=m\tau_-}^{m\tau_+} \HH^0(X,mL)_k= \HH^0(X,mL-m\tau_-Y_--(m\delta-m\tau_+)Y_+).$$
\end{lemma}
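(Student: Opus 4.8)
The plan is to recover the vanishing orders along the two extremal fixed divisors $Y_{\pm}$ from the $\C^*$-weights of the sections of $mL$. First I would record that $Y_{\pm}$ are prime divisors on $X$: as the action is of $B$-type, $\GX(0,1)\simeq Y_-$ and $\GX(r-1,r)\simeq Y_+$ are geometric quotients of dense $\C^*$-invariant open subsets of $X$, hence of dimension $\dim X-1$; they are irreducible, being the sink and the source; and their local rings $\cO_{X,\eta_{\pm}}$ at the generic points $\eta_{\pm}$ are DVR's, since $X$ is normal. I also fix the linearization of $L$ with $\mu_L(Y_-)=0$, so that $\mu_{mL}(Y_-)=0$ and $\mu_{mL}(Y_+)=m\delta$ for every $m>0$.

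The crucial claim is: \emph{for every $m>0$ and every nonzero homogeneous $s\in\HH^0(X,mL)_k$ one has $\operatorname{ord}_{Y_-}(s)=k$ and $\operatorname{ord}_{Y_+}(s)=m\delta-k$} (so, in particular, $0\le k\le m\delta$). To prove it near $Y_-$, choose by \cite{sumihiro} a $\C^*$-invariant affine neighbourhood of $\eta_-$ on which $\cO_X(mL)$ is equivariantly trivial; the weight of the equivariant frame equals the weight of the fibre of $mL$ over the fixed locus $Y_-$, namely $\mu_{mL}(Y_-)=0$. The ideal of $Y_-$ in this chart is homogeneous, hence contains a homogeneous local equation $z$ of $Y_-$ at $\eta_-$; since $\C^*$ acts trivially on $\kappa(\eta_-)$ ($Y_-$ being pointwise fixed) and the action is equalized at the sink, the weight of $z$ is $\pm1$ by \cite[Lemma~2.1]{WORS3}, and it must be $+1$ as $Y_-$ is the sink, i.e.\ the component on which $\mu_L$ is minimal. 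Writing $s=f\cdot(\text{frame})$ near $\eta_-$, $f$ is a regular function of weight $k$; as $z$ has weight $1$ and $\kappa(\eta_-)$ weight $0$, $f$ is a unit times $z^k$, so $\operatorname{ord}_{Y_-}(s)=k$. The computation at $\eta_+$ is symmetric: the equivariant frame has weight $\mu_{mL}(Y_+)=m\delta$, a homogeneous local equation of $Y_+$ has weight $-1$ (the sign being forced since $Y_+$ is the source, where $\mu_L$ is maximal), and one obtains $\operatorname{ord}_{Y_+}(s)=m\delta-k$.

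With the claim available, the lemma follows formally. Since $\operatorname{ord}_{Y_-}$ is a valuation and the weights occurring in the decomposition $\HH^0(X,mL)=\bigoplus_k\HH^0(X,mL)_k$ are pairwise distinct, for $s=\sum_k s_k$ we get $\operatorname{ord}_{Y_-}(s)=\min\{k : s_k\neq0\}$ and, symmetrically, $\operatorname{ord}_{Y_+}(s)=m\delta-\max\{k : s_k\neq0\}$. On the other hand, $\HH^0(X,mL-m\tau_-Y_--(m\delta-m\tau_+)Y_+)$ identifies with the subspace of those $s\in\HH^0(X,mL)$ whose divisor of zeros dominates $m\tau_-Y_-+(m\delta-m\tau_+)Y_+$ — there being no further condition, as the two divisors agree away from $Y_{\pm}$, and, $\tau_{\pm}$ being rational, this amounts to $\operatorname{ord}_{Y_-}(s)\ge m\tau_-$ and $\operatorname{ord}_{Y_+}(s)\ge m\delta-m\tau_+$. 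Combining, $s=\sum_k s_k$ lies in this subspace precisely when $s_k=0$ for every $k<m\tau_-$ and every $k>m\tau_+$, i.e.\ precisely when $s\in\bigoplus_{k=m\tau_-}^{m\tau_+}\HH^0(X,mL)_k$, which is the claimed equality.

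The only real work is in the crucial claim, and there in fixing the $\C^*$-weight of a homogeneous local equation of $Y_{\pm}$ by means of the equalization hypothesis (the reduction to a DVR at the generic point, and the extraction of a homogeneous uniformizer, being the technical points to be spelled out carefully); the bordism ($B$-type) hypothesis serves only to guarantee that $Y_{\pm}$ are divisors, after which everything is routine manipulation of valuations and weights.
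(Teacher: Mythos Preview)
Your proof is correct and follows essentially the same route as the paper's: both identify the $\C^*$-weight $k$ of a section with its order of vanishing along $Y_\pm$, using equalization at the sink and source and working at a general (equivalently, generic) point of $Y_\pm$ where $X$ is smooth. The only difference is that the paper outsources this weight--order relation to \cite[Corollary~2.4]{WORS5} (alternatively \cite[Lemma~2.17]{BWW}), whereas you rederive it by hand at the DVR $\cO_{X,\eta_\pm}$, obtaining the exact equality $\operatorname{ord}_{Y_-}(s_k)=k$ that makes both inclusions explicit.
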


\begin{proof}
Let us denote $W:=\HH^0(X,mL-m\tau_-Y_--(m\delta-m\tau_+)Y_+)\subset \HH^0(X,mL)$. 
Note first that $W$ is $\C^*$-invariant, therefore, $W=\bigoplus_k (\HH^0(X,mL)_k\cap W)$.

We will use \cite[Corollary~2.4]{WORS5} (which follows from \cite[Lemma 2.17]{BWW}), which determines the multiplicity of the $\C^*$-invariant sections of $\HH^0(X,mL)$ at the extremal fixed point components of the action. We note first that the proof of this result requires only the smoothness of the variety at the general points of $Y_\pm$, and this condition holds in our situation, because $X$ is normal and the action is of B-type. The quoted Corollary tells us that a nonzero section $s\in \HH^0(X,mL)_u$  vanishes with multiplicity precisely equal to $u$ at $Y_-$ and $m\delta-u$ at $Y_+$. This implies that $\HH^0(X,mL)_u\subset W$ if $u\in [m\tau_-,m\tau_+]$ and $\HH^0(X,mL)_u\cap W=0$ if $u\notin [m\tau_-,m\tau_+]$, and the claimed equality follows.

%
%
%
%
%
\end{proof}

We will show in Lemma \ref{lemma:sectionsink} that, for any positive integer $m$ greater or equal than a certain $m_-\in \Z_{>0}$, there exists an isomorphism between global sections of $mL$ on $X$ of weight $c$, with $c\in [0,\ldots,m\delta]\cap \Z$, and global sections of $mL_{|Y_-}-cY_{-|Y_-}$ on $Y_-$. To this end, we first prove the following:

\begin{lemma}\label{lemma:qfactorialquotients}
	In the situation of Set-up \ref{setup:bordismtoMDR}, the sink $Y_-$ and the source $Y_+$ are $\Q$-factorial.
\end{lemma}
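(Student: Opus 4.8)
The plan is to deduce the $\Q$-factoriality of $Y_-$ and $Y_+$ from that of $X$, using the fact that both sink and source are (via the $B$-type hypothesis) identified with extremal geometric quotients of the $\C^*$-action, together with the equalization at the extremal fixed components. Since $Y_-\simeq\GX(0,0)\simeq\GX(0,1)$ and $Y_+\simeq\GX(r,r)\simeq\GX(r-1,r)$ as abstract varieties, it suffices to treat one of them, say $Y_-$; the argument for $Y_+$ is identical after reversing the $\C^*$-action.

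First I would fix a Weil divisor $D$ on $Y_-$ and aim to show that some positive multiple $mD$ is Cartier. The natural tool is the extension map of Definition \ref{definition:extension}: consider $e_0(D)=\overline{\pi_0^{-1}(D)}\in\Div(X)$, where $\pi_0\colon X^s(0,1)\to\GX(0,1)\simeq Y_-$ is the geometric quotient map. Because $X$ is $\Q$-factorial, there is $m>0$ such that $m\,e_0(D)$ is Cartier on $X$, and by Lemma \ref{lemma:extensioninvariantdivisors} we may assume $e_0(D)$ is $\C^*$-invariant; in fact $e_0(D)$ is not pointwise fixed (its general point lies on a $1$-dimensional orbit with sink in $Y_-$), so it is an honest $\C^*$-linearizable Cartier divisor after multiplying by $m$. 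The key step is then to restrict $m\,e_0(D)$ back to $Y_-$ and check that the resulting Cartier divisor on $Y_-$ is linearly (or at least $\Q$-linearly) equivalent to $mD$. Here the equalization hypothesis at $Y_-$ is what makes the restriction behave well: along a smooth $\C^*$-invariant open subset $U$ meeting $Y_-$, the quotient map $\pi_0$ restricted to $U\cap X^s(0,1)$ is, after shrinking, a $\C^*$-bundle over an open subset of $Y_-$ with trivial isotropy, so $e_0$ followed by restriction to the zero-section recovers $D$ up to a divisor supported on $Y_-$ itself — and such a correction term, being a multiple of $Y_-$, can be absorbed because $Y_-=\GX(0,0)$ and $X$ being smooth at the general point of $Y_-$ forces $Y_-$ to be $\Q$-Cartier near its general point; one then globalizes using that $Y_-$ is the support of the $\C^*$-invariant Cartier divisor $Y_-$ on $X$ restricted appropriately. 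Concretely, I expect to show $\cO_X(m\,e_0(D))|_{Y_-}\simeq \cO_{Y_-}(mD)\otimes\cO_{Y_-}(Y_-)^{\otimes c}$ for some integer $c$ computable from the weights, and then observe that $\cO_{Y_-}(Y_-)$ — i.e. the conormal-type sheaf $N^\vee$ or its dual — is a line bundle on $Y_-$ because the equalized action makes $X$ smooth along a big open subset of $Y_-$ and $Y_-$ a Cartier divisor there; invoking that $Y_-$ is normal and the locus where these sheaves fail to be invertible has codimension $\geq 2$, reflexivity gives that $\cO_{Y_-}(Y_-)$ is a line bundle globally, so $mD$ is Cartier.

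The main obstacle I anticipate is precisely the control of the correction term supported on $Y_-$: unlike the smooth case of \cite{WORS3}, here $X$ may be singular along proper subsets of $Y_-$, so I cannot simply say "$Y_-$ is Cartier on $X$" and restrict the adjunction-type exact sequence. The clean way around this is to work only over the big open subset $Y_-^\circ\subseteq Y_-$ over which $X$ is smooth and the action is equalized (its complement has codimension $\geq 2$ in $Y_-$ since $Y_-$ is normal and the singular locus of $X$ meets it properly), establish the line-bundle statement there, and then extend by reflexivity/normality of $Y_-$. Alternatively — and this may be the slicker route — one can use Lemma \ref{lemma:sections} directly: it identifies $\HH^0(X,mL)_{m\tau_-}$ with $\HH^0(X, mL-m\tau_-Y_--(m\delta-m\tau_-)Y_+)$, and pushing this through the description $\GX(0,0)=\Proj\bigoplus_m\HH^0(X,mL)_{ma_0}$ of Definition \ref{def:semigeometricqquotient} together with projective normality of suitable Veronese embeddings shows that $Y_-$, as $\Proj$ of a section ring of the $\Q$-Cartier divisor $L-a_0Y_-$ (say) restricted from $X$, inherits $\Q$-factoriality from $X$. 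I would present the proof along the first route (extension map plus equalization plus reflexivity on $Y_-$), since it is the most self-contained given the lemmas already proved, and flag the second as a remark.
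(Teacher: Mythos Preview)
Your core strategy --- extend $D$ to $X$ via $e_0$, use the $\Q$-factoriality of $X$ to make a multiple Cartier, then restrict back to $Y_-$ --- is exactly the paper's approach. However, you anticipate complications that do not arise, and the resulting outline is far more elaborate than necessary.

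The key simplification you miss is the following. By the B-type hypothesis, the extension $e_0(D)=\overline{\pi_0^{-1}(D)}$ can be rewritten as $\overline{\pi'^{-1}(D)}$, where $\pi'\colon X^-(Y_-)\to Y_-$ is the Bia{\l}ynicki-Birula retraction $x\mapsto \lim_{t\to\infty}tx$ (one checks $X^-(Y_-)\setminus X^s(0,1)=Y_-$, so the two closures agree). Since $X^-(Y_-)$ is open in $X$, contains $Y_-$, and $\pi'|_{Y_-}=\id$, one gets directly $e_0(D)\cap Y_-=D$. In particular $e_0(D)$ does \emph{not} contain $Y_-$, so when you restrict the Cartier divisor $m\,e_0(D)$ to $Y_-$ there is no correction term of the form $c\,Y_{-|Y_-}$: the restriction is simply $mD$, and you are done. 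All of the machinery you propose --- the conormal sheaf $\cO_{Y_-}(Y_-)$, reflexivity arguments over a big open subset, the second route via Lemma~\ref{lemma:sections} and $\Proj$ descriptions --- is unnecessary. Note also that the equalization hypothesis at $Y_\pm$, which you invoke repeatedly, is not actually used in the paper's proof of this lemma; only the B-type condition and the $\Q$-factoriality of $X$ are needed.
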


\begin{proof}
	We prove the result for $Y_-$; a similar proof works in the case of $Y_+$. Let $D$ be a prime divisor in $Y_-$, and consider its extension $e_-(D)\in \Div(X)$. By definition it is the closure $\overline{\pi^{-1}(D)}\subset X$, where $\pi:X^s(0,1)\to Y_-$. The fact that the $\C^*$-action is of B-type implies that $e_-(D)$ can also be written as $\overline{\pi'^{-1}(D)}$ where $\pi':X^-(Y_-)\to Y_-$. Then it follows that $e_-(D)\cap Y_-=D$. Since $X$ is $\Q$-factorial, there exists 	$m\in\Z_{>0}$ such that $me_-(D)=e_-(mD)$ is Cartier, and so $mD=me_-(D)\cap Y_-$ is Cartier, as well. 
\end{proof}

\begin{lemma}\label{lemma:sectionsink}
	In the situation of Set-up \ref{setup:bordismtoMDR}, there exists a positive integer $m_-$ such that for $m\geq m_-$ and every $c\in [0,\ldots,m\delta]\cap \Z$ it holds that:
	$$\HH^0(X,mL)_c\simeq \HH^0(Y_-,mL_{|Y_-}-cY_{-|Y_-}).$$
\end{lemma}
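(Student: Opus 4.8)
The plan is to relate the weight-$c$ part of $\HH^0(X,mL)$ to a twisted section module on $Y_-$ by pushing forward along the Bia\l ynicki-Birula projection $\pi'\colon X^-(Y_-)\to Y_-$, and then controlling what happens on the complement $X\setminus X^-(Y_-)$, which is where the admissibility (bordism) hypothesis enters. Concretely, since the action is equalized at the sink, Lemma \ref{lemma:sections} (with $\tau_-=c/m$, $\tau_+=\delta$) already tells us that a section $s\in\HH^0(X,mL)_c$ vanishes to order at least $c$ along $Y_-$; conversely, restricting a section of $mL-cY_-$ to the open dense cell $X^-(Y_-)$ and extending, the normalized quotient has weight exactly $c$. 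The map in one direction is $s\mapsto s\cdot \chi_c^{-1}$ viewed inside the function field, where $\chi_c$ is the local equation cutting out $Y_-$ with the appropriate eigenweight; in the other direction one takes a section of $mL-cY_{-|Y_-}$ on $Y_-$, pulls it back via $\pi'$, and shows it extends to a global section of $mL$ on all of $X$ with weight $c$.

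The key steps, in order, are: (1) identify $X^-(Y_-)$ with (an open subset of) a line-bundle-like neighborhood of $Y_-$ on which the weight-$c$ sections of $mL$ correspond, after dividing by $\chi^c$, to sections of $(mL-cY_-)_{|Y_-}$ pulled back from $Y_-$; here one uses that the action is of $B$-type so that $\GX(0,1)\to Y_-$ is an isomorphism and $\pi'\colon X^-(Y_-)\to Y_-$ has connected fibers with the sink as a section. (2) Show the restriction map $\HH^0(X,mL)_c\to \HH^0(X^-(Y_-),mL)_c$ is injective because $X^-(Y_-)$ is dense, and that its image consists exactly of pulled-back sections — this is where we invoke that $X$ is normal and $\Q$-factorial (Lemma \ref{lemma:qfactorialquotients} guarantees $Y_-$ is $\Q$-factorial so the divisor $mL_{|Y_-}-cY_{-|Y_-}$ makes sense as a Cartier, or at least $\Q$-Cartier, class after passing to a multiple). (3) Show surjectivity: given $t\in\HH^0(Y_-,mL_{|Y_-}-cY_{-|Y_-})$, the section $\chi^c\cdot\pi'^*(t)$ is a priori only defined on $X^-(Y_-)$, and we must check it extends across $X\setminus X^-(Y_-)$. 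Since the action is a bordism, $\overline{X^+(Y)}$ for inner $Y$ and $Y_+$ itself have codimension $\geq 2$ in $X$ away from... precisely, $X\setminus X^-(Y_-)$ is a union of closures of cells $X^+(Y)$ with $Y\neq Y_-$, and by the bordism/admissibility hypothesis the divisorial part of this complement is at most $Y_+$; by Lemma \ref{lemma:sections} the extension has a pole of order at most $-(m\delta-c)\le 0$... in fact a zero of order $m\delta-c\ge 0$ along $Y_+$, so it is a genuine section there, and along all other components the complement has codimension $\geq 2$ so by normality of $X$ (Hartogs) the section extends.

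The main obstacle I expect is step (3), the extension/surjectivity: one has to argue that the only divisor in $X$ meeting $X^-(Y_-)$'s complement is $Y_+$ (using $\codim\overline{X^\pm(Y)}\geq 2$ for inner $Y$, Remark \ref{remark:bordismiffadmissible}), and then combine the vanishing estimate of Lemma \ref{lemma:sections} along $Y_+$ with Hartogs' extension across the higher-codimension locus. A secondary subtlety is bookkeeping the eigenweight of the local defining section $\chi$ of $Y_-$: because the action is equalized at $Y_-$, the weight of $\mathcal{O}_X(-Y_-)$ along $Y_-$ is $\pm1$, which is exactly what makes the shift $c\mapsto$ "vanishing order $c$" work on the nose rather than up to a multiple. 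Once these two points are nailed down, the isomorphism is the composite of restriction to $X^-(Y_-)$, division by $\chi^c$, and descent along $\pi'$, with inverse given by pullback, multiplication by $\chi^c$, and Hartogs extension.
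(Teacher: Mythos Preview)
Your reduction---identifying $\HH^0(X,mL)_c$ with the cokernel of $\HH^0(X,mL-(c+1)Y_-)\hookrightarrow\HH^0(X,mL-cY_-)$ via Lemma~\ref{lemma:sections}, and hence reducing to surjectivity of the restriction $\HH^0(X,mL-cY_-)\to\HH^0(Y_-,(mL-cY_-)_{|Y_-})$---is exactly the paper's first step (the diagram with exact columns). The divergence is in how surjectivity is proved, and your route has a real gap.

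The paper does \emph{not} work directly on $X^-(Y_-)$. Instead it uses that $X^s(0,1)\to Y_-$ is a $\C^*$-principal bundle (from the equalized hypothesis) to form the $\P^1$-bundle $\hat X:=X^s(0,1)\times^{\C^*}\P^1$ over $Y_-$, and invokes \cite[Proposition~2.15]{WORS3} to see that $\Phi:X\dashrightarrow\hat X$ is an SQM. The surjectivity is then proved on $\hat X$: given $E\in|\hat D_{|Y_-}|$ (with $D=mL-cY_-$), one checks that $\hat e_-(E)+(m\delta-c)s_+(Y_-)\in|\hat D|$ by intersecting both sides with the general $\P^1$-fiber $\cC$.

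Your direct construction ``$\chi^c\cdot\pi'^*(t)$'' is where the gap lies. For this to be a section of $mL$ on $X^-(Y_-)$ you need two things: that $\pi'\colon X^-(Y_-)\to Y_-$ is a morphism, and that $\pi'^*(mL_{|Y_-})\simeq mL_{|X^-(Y_-)}$ in a way compatible with $\cO(cY_-)\simeq\pi'^*\cO(cY_{-|Y_-})$. Both hold when $X$ is smooth (Bia\l ynicki-Birula), but the paper explicitly warns in \S\ref{ssec:BB} that for $X$ merely normal the maps $X^\pm(Y)\to Y$ need not be affine bundles, so neither assertion is available. The passage to $\hat X$ is precisely what repairs this: it replaces the possibly ill-behaved $X^-(Y_-)$ by a genuine line bundle over $Y_-$, at the cost of a codimension-$\geq 2$ modification (which is harmless for sections). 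Your phrase ``identify $X^-(Y_-)$ with (an open subset of) a line-bundle-like neighborhood'' gestures at this but does not supply it.

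A second, smaller issue: invoking Lemma~\ref{lemma:sections} to bound the pole along $Y_+$ is circular, since that lemma concerns global sections and you are trying to show your candidate \emph{is} global. The local weight computation at a general (smooth) point of $Y_+$---writing the section as $f(y)x^{m\delta-c}$ times a frame, with $m\delta-c\geq 0$---does give the extension, and is essentially the content behind Lemma~\ref{lemma:sections}; but you would need to state and carry out that computation rather than quote the lemma.
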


\begin{proof}
Let us first note that, by Lemma \ref{lemma:sections} it follows that, for every $m\in \Z_{>0}$, and every $c\in \Z_{\ge 0}$, $c\leq m\delta$, we have a commutative diagram with exact columns:
	\begin{center}
		\begin{tikzcd}
			0 \arrow[d]                                        & 0 \arrow[d]                      \\
			{\bigoplus\limits_{k=c+1}^{m\delta}\HH^0(X,mL)_k} \arrow[d] \arrow[r, "\simeq"] & {\HH^0(X,mL-(c+1)Y_-)} \arrow[d]  \\
			{\bigoplus\limits_{k=c}^{m\delta}\HH^0(X,mL)_k} \arrow[d] \arrow[r, "\simeq"] & {\HH^0(X,mL-cY_-)} \arrow[d]      \\
			{\HH^0(X,mL)_c} \arrow[d] \arrow[r]  & {\HH^0(Y_-,mL_{|Y_-}-cY_{-|Y_-})} \\
			0                                                  &                        &                                 
		\end{tikzcd}
	\end{center}
	It is then enough to show that there exists $m_-$ such that for every $m\geq m_-$, and every $c=0,\ldots,m\delta$ the restriction map $\HH^0(X,mL-cY_-)\to \HH^0(Y_-,(mL-cY_-)_{|Y_-})$
	is surjective, or, equivalently, that the rational map $$|mL-cY_-|\dashrightarrow |(mL-cY_-)_{|Y_-}|$$ is surjective.

We start by claiming that there exists $m_-$ such that for every $m\geq m_-$, $\HH^0(X,mL)_c\neq 0$ for every $c \in [0,\ldots,m\delta]\cap \Z$. In fact, let $C$ be the closure of the general $\C^*$-orbit in $X$, which has extremal fixed points in $Y_-$, $Y_+$, respectively. The generality assumption implies that the $\C^*$-action on $C$ is faithful, that its extremal points are smooth points of $Y_\pm$ and, by the Bia{\l}ynicki-Birula decomposition, that $C$ is isomorphic to $\P^1$. By Serre vanishing, there exists an integer $m_-$ such that for every $m\geq m_-$ the restriction map:
$$
\HH^0(X,mL)\to \HH^0(C,mL_{|C})
$$
is surjective and $\C^*$-equivariant. By \cite[Corollary~3.2]{RW} one has $\HH^0(C,mL_{|C})\simeq \HH^0(\P^1,\cO_{\P^1}(m\delta))$, and since the set of weights of the induced $\C^*$-action on the vector space $\HH^0(\P^1,\cO_{\P^1}(m\delta))$ is $[0,m\delta]\cap \Z$, the claim follows. 

Putting it together with the commutative diagram above, the claim implies that the map $\HH^0(X,mL-(c+1)Y_-)\to \HH^0(X,mL-cY_-)$ is not surjective for $m\geq m_-$ and every $c\in [0,m\delta]\cap \Z$, that is, we have a strict inclusion:
\begin{equation}\label{eq:linsyst}
|mL-(c+1)Y_-|+Y_-\subsetneq |mL-cY_-|.
\end{equation}
In particular (since the projective space $|mL-cY_-|$ is spanned by $\C^*$-invariant elements), there exists a $\C^*$-invariant effective divisor $D_1\in|mL-cY_-|$ whose support does not contain $Y_-$. Using Lemma \ref{lemma:extensioninvariantdivisors} It follows that $$D_1=e_0({D'_1})+aY_+\mbox{ for some $a\geq 0$ and some $D'_1\in \Div(Y_-)$;}$$  here we are denoting by  $e_0:\Div(Y_-)=\Div(\GX(0,1))\to\Div(X)$ the extension map of divisors introduced in Definition \ref{definition:extension}. By restricting the above equality to $Y_-$ we get that $D_{1\mid Y_-}=D_1'$, hence
$$D_1=e_0({D_1}_{|Y_-})+aY_+\mbox{ for some $a\geq 0$}.$$

Let us now conclude the proof of the statement by showing that the restriction map $|mL-cY_-|\dashrightarrow |(mL-cY_-)_{|Y_-}|$ is surjective. Given $D'\in |(mL-cY_-)_{|Y_-}|$, Lemma \ref{lemma:extensionlinearequivalence} tells us that $e_0(D')\equiv e_0({D_1}_{|Y_-})$ which we have proven to be linearly equivalent to $D_1-aY_+$, for some $a\geq 0$. Then it follows that $(e_0(D')+aY_+)_{|Y_-}=D'$, and $e_0(D')+aY_+\in |D|$.


\end{proof}

\begin{remark}\label{remark:sectionsource}
Following a similar proof, one may show that, in the situation of Set-up \ref{setup:bordismtoMDR}, there exists a positive integer $m_+$ such that for any $m\geq m_+$, and for any $c\in [0,\ldots,m\delta]$, it holds that
$$\HH^0(X,mL)_c\simeq \HH^0(Y_+,mL_{|Y_+}-(m\delta-c)Y_{+|Y_+}).$$
\end{remark}

Notice that, since the $\C^*$-action a bordism and since $Y_{\pm}$ are $\Q$-factorial by Lemma \ref{lemma:qfactorialquotients}, the natural birational map $\psi: Y_-\dashrightarrow Y_+$ is a SQM.

\begin{theorem}\label{theorem:mdr}
	In the situation of Set-up \ref{setup:bordismtoMDR}, consider $L_-=L_{\mid Y_-}$ and let $L_+=L_{\mid Y_-}- \delta Y_{- \mid Y_-}$.	
	Then the pair $(L_-,L_+)$ is an MDB, that is the cone $\cC=\langle L_-,L_+ \rangle$ is a Mori dream region.
\end{theorem}


\begin{proof}
	We show each condition of Definition \ref{def:MDRtype} is satisfied. Notice that $L_{\pm}$ are effective, and $L_-$ is ample. Using Lemma \ref{lemma:sectionsink} and Remark \ref{remark:sectionsource}, there exists a positive integer $m_0\geq m_{\pm}$ such that, for any $m\geq m_0$ and any $c\in [0,\ldots,m\delta]\cap \Z$, it holds that
	$$\HH^0(Y_-,mL_{\mid Y_-}-cY_{-\mid Y_-})\simeq \HH^0(X,mL)_c \simeq \HH^0(Y_+,mL_{\mid Y_+}-(m\delta-c)Y_{+\mid Y_+}).$$
	Let $d\geq m_0$ be a positive integer, and consider the $d$-Veronese algebras $R(Y_{\pm},L_{\mid Y_{\pm}})^{(d)}$, which are still finitely generated. Using the above identity, it is readily seen that $Y_+=\Proj R(Y_+, L_{|Y_+})^{(d)}\simeq \Proj R(Y_-,L_+)^{(d)}$. 
	It remains to show that $R(Y_-;L_-,L_+)$ is finitely generated. Consider the $d$-Veronese algebra $R(X;L)^{(d)}$, which is still finitely generated being $L$ ample and $X$ projective.
	By using Lemma \ref{lemma:sectionsink}, we know that
	$$\bigoplus_{m\geq 0}\bigoplus_{k=0}^{md\delta} \HH^0(X,mL)_k\simeq \bigoplus_{m\geq 0}\bigoplus_{k=0}^{md\delta} \HH^0(Y_-,mL_{|Y_-}-kY_{-|Y_-})$$
	is finitely generated.
	Notice that we may rewrite the RHS using $L_{\pm}$, thus obtaining that
	$$R(X;L)^{(d)}\simeq \bigoplus_{(a,b)\in S} \HH^0(Y_-,aL_{|Y_-}+b(L_{|Y_-}-\delta Y_{-|Y_-})),$$
	where $S$ denotes the monoid $\frac{1}{d\delta}(\Z_{\geq 0})^{\oplus 2}\subset \Q^{\oplus 2}$.  
	We may represent this situation by means of the following image, where the black dots belong to $S$, and the empty ones to $\N^{\oplus 2}\subset S$: 
	\begin{center}
		\includegraphics[scale=0.2]{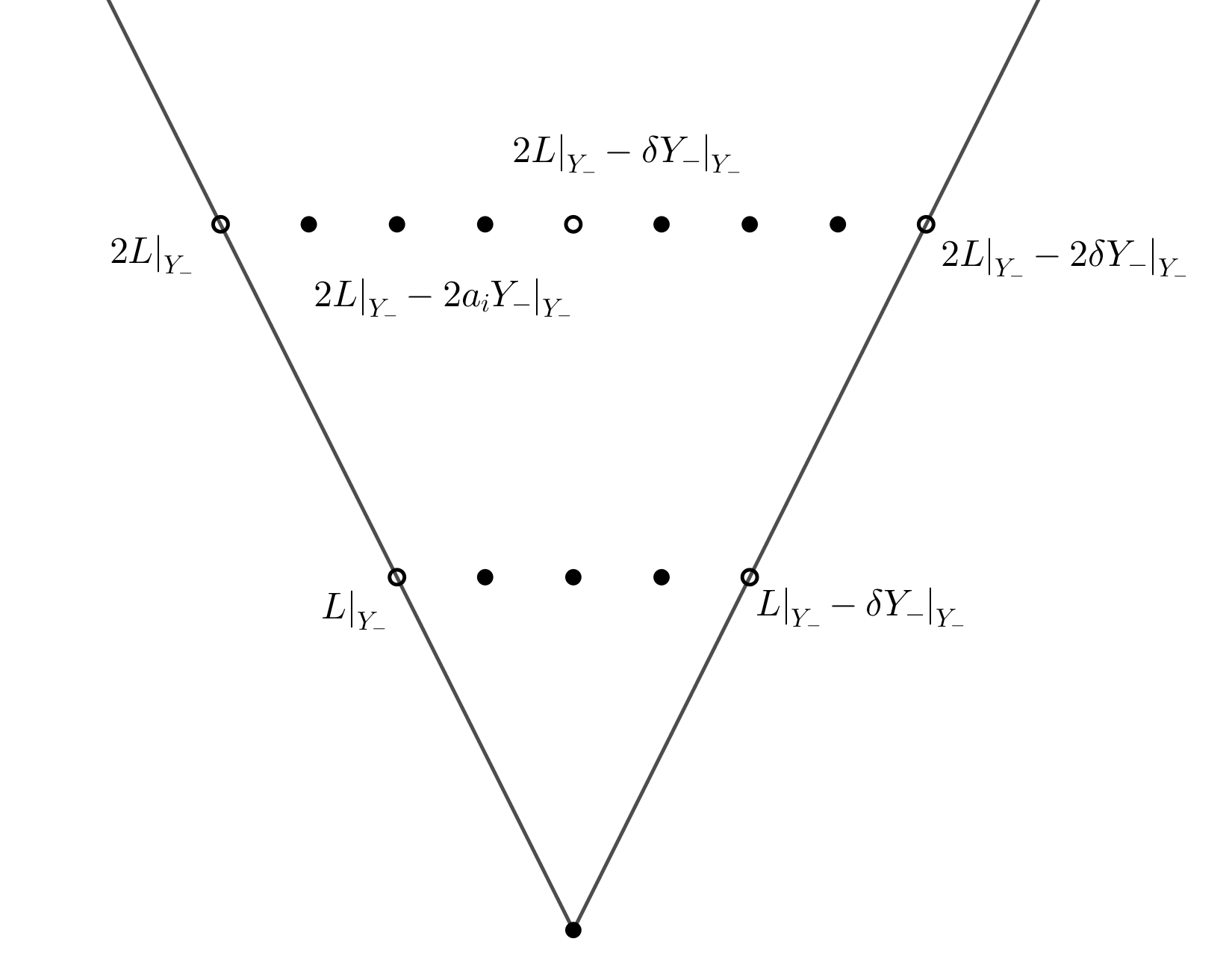}
	\end{center}
	Therefore, using \cite[Lemma 2.25]{cascinilazic} (see also \cite[Propositions 1.2.2, 1.2.4]{adhl}), we conclude that the algebra
	$$R(Y_-;L_-,L_+)=\bigoplus_{a,b\in \N^{\oplus 2}} \HH^0(Y_-,aL_{|Y_-}+b(L_{|Y_-}-\delta Y_{-|Y_-}))$$
	associated to the cone $\cC=\langle L_-,L_+ \rangle$ is finitely generated. 
\end{proof}

We conclude this section by showing that the MDB obtained in Theorem \ref{theorem:mdr} admits a chamber decomposition, which is induced by the $\C^*$-action on the polarized pair $(X,L)$. The decomposition of Mori dream regions, which reproduces the behaviour of Mori dream spaces, has been stated by \cite[Definition 2.12]{HuKeel}: we refer to \cite[Theorem 4.3]{KKL} and \cite[Proposition 9.6]{okawa} for the precise statements.

\begin{definition}\label{definition:chamber}
	Let $X$ be a normal projective variety, and let $\cC$ be a rational polyhedral cone in $\CDiv(X)_{\Q}$. We say that $\cC$ is a chamber if, for any $D_1,D_2\in \cC$ with finitely generated section ring, it holds that $\Proj R(X;D_1)\simeq \Proj R(X;D_2)$. We call the variety $\Proj R(X;D_1)$ the \emph{chamber model} of $\cC$.
\end{definition}


\begin{theorem}\label{theorem:decomposition}
	In the situation of Set-up \ref{setup:bordismtoMDR}, the cone $\cC=\langle L_-,L_+\rangle$ of Theorem \ref{theorem:mdr} admits a subdivision $$\cC=\bigcup_{i=0}^{r-1} \cC_i, \quad \cC_i=\langle L_{|Y_-}-a_iY_{-|Y_-},L_{|Y_-}-a_{i+1}Y_{-|Y_-} \rangle.$$
	Moreover, for every $i=0,\ldots,r-1$ the cone $\cC_i$ is a chamber whose model is $\GX(i,i+1)$.
\end{theorem}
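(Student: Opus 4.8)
The plan is to interpret each cone $\cC_i$ via the weight decomposition of $R(X;L)$ and Lemma~\ref{lemma:sectionsink}, and then to identify the chamber model with the geometric quotient $\GX(i,i+1)$ using Remark~\ref{remark:projquotients}. First I would fix $i\in\{0,\dots,r-1\}$ and describe a divisor $D$ in the interior $\cC_i^\circ$. Since $\cC_i=\langle L_{|Y_-}-a_iY_{-|Y_-},\,L_{|Y_-}-a_{i+1}Y_{-|Y_-}\rangle$, a general $\Q$-divisor in $\cC_i^\circ$ is of the form $D=L_{|Y_-}-\tau Y_{-|Y_-}$ for some rational $\tau\in(a_i,a_{i+1})$ (after rescaling $L$, which does not change $\Proj$). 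By Lemma~\ref{lemma:sectionsink}, for $m\gg 0$ with $m\tau\in\Z$,
$$
\HH^0(Y_-,\,mD)=\HH^0(Y_-,\,mL_{|Y_-}-m\tau Y_{-|Y_-})\simeq \HH^0(X,mL)_{m\tau},
$$
and these isomorphisms are compatible with multiplication, so $R(Y_-;D)^{(m_0)}\simeq \bigoplus_{m\ge0,\ mm_0\tau\in\Z}\HH^0(X,mm_0L)_{mm_0\tau}$ for a suitable $m_0$ making the weights integral. Passing to $\Proj$ and invoking Remark~\ref{remark:projquotients} with this rational $\tau\in(a_i,a_{i+1})$ gives $\Proj R(Y_-;D)\simeq\GX(i,i+1)$.

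Next I would check that this identification is genuinely independent of the choice of $D_1,D_2\in\cC_i^\circ\cap\CDiv(Y_-)$, establishing that $\cC_i$ is a $\C^*$-invariant chamber in the sense of Definition~\ref{def:invariantchamber}. The point is that two such divisors, after clearing denominators, correspond to two rational numbers $\tau_1,\tau_2$ both in the \emph{open} interval $(a_i,a_{i+1})$; by Remark~\ref{remark:projquotients} the $\Proj$ of the corresponding weight-graded subalgebra of $R(X;L)$ depends only on the chamber $(a_i,a_{i+1})$ and not on the particular $\tau$, because $X^{\st}(i,i+1)$ and its linearization are unchanged as $\tau$ ranges over $(a_i,a_{i+1})\cap\Q$ (this is precisely the content of \cite[Amplification 1.11]{MFK} as used in Remark~\ref{remark:projquotients}). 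Here one must be slightly careful: a general Cartier divisor $D\in\cC_i^\circ$ need not be exactly of the form $L_{|Y_-}-\tau Y_{-|Y_-}$, but it is $\Q$-linearly equivalent (up to scaling) to a positive combination $a(L_{|Y_-}-a_iY_{-|Y_-})+b(L_{|Y_-}-a_{i+1}Y_{-|Y_-})$ with $a,b>0$, which equals $(a+b)(L_{|Y_-}-\tau Y_{-|Y_-})$ with $\tau=(aa_i+ba_{i+1})/(a+b)\in(a_i,a_{i+1})$; since $\Proj$ is insensitive to the overall scaling and, by Theorem~\ref{theorem:mdr}, $R(Y_-;D)$ is a direct summand/Veronese-type piece of the finitely generated ring $R(Y_-;L_-,L_+)$, this reduction is harmless.

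Finally I would verify that the $\cC_i$ indeed tile $\cC$: since $a_0=0$ and $a_r=\delta$ (Set-up~\ref{setup:bordismtoMDR}), the divisor $L_{|Y_-}-a_0Y_{-|Y_-}=L_-$ spans one extremal ray and $L_{|Y_-}-a_rY_{-|Y_-}=L_{|Y_-}-\delta Y_{-|Y_-}=L_+$ (by the strict-transform computation in the proof of Theorem~\ref{theorem:mdr}) spans the other, and the intermediate rays $L_{|Y_-}-a_iY_{-|Y_-}$ for $0<i<r$ subdivide the two-dimensional cone $\cC$ in the correct order because the $a_i$ are strictly increasing. Hence $\cC=\bigcup_{i=0}^{r-1}\cC_i$ with $\cC_i\cap\cC_{i+1}$ the common ray $\langle L_{|Y_-}-a_{i+1}Y_{-|Y_-}\rangle$, and on each $\cC_i$ the chamber model is $\GX(i,i+1)$ as shown. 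The main obstacle I anticipate is the bookkeeping at the \emph{boundary} between interior and the precise form of Cartier (as opposed to $\Q$-Cartier) divisors in $\cC_i^\circ$: one needs $\tau$ to stay strictly inside $(a_i,a_{i+1})$ so that Remark~\ref{remark:projquotients} applies verbatim, and one needs the weight-graded pieces to assemble into a finitely generated ring whose $\Proj$ is literally $\GX(i,i+1)$ — both of which follow from Lemma~\ref{lemma:sectionsink}, Theorem~\ref{theorem:mdr}, and Remark~\ref{remark:projquotients}, but require care to state with the correct Veronese twists.
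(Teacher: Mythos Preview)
Your proposal is correct and follows essentially the same approach as the paper: write a divisor $D$ in the interior of $\cC_i$ as (a multiple of) $L_{|Y_-}-\tau Y_{-|Y_-}$ with $\tau\in(a_i,a_{i+1})\cap\Q$, apply Lemma~\ref{lemma:sectionsink} to identify $\HH^0(Y_-,mD)$ with the weight piece $\HH^0(X,mL)_{m\tau}$, and then invoke Remark~\ref{remark:projquotients} to conclude $\Proj R(Y_-;D)\simeq\GX(i,i+1)$. The paper's proof is more terse---it parametrizes $D=\beta(L_{|Y_-}-a_iY_{-|Y_-})+\gamma(L_{|Y_-}-a_{i+1}Y_{-|Y_-})$ directly and omits the explicit verification of the tiling and the discussion of Veronese bookkeeping you include---but the argument is the same.
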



\begin{proof}
	The existence of such a subdivision follows immediately by recalling that $a_i<a_{i+1}$ for every $i=0,\ldots,r-1$. In order to conclude, it suffices to show that, for every $i=0,\ldots,r-1$, the cone $\cC_i$ is a chamber. Let $D=\beta(L_{|Y_-}-a_iY_{-|Y_-})+\gamma(L_{|Y_-}-a_{i+1}Y_{-|Y_-})$ be a divisor in $\cC_i$, where $\beta,\gamma\in \Q_{>0}$. Let $q$ be a positive integer such that $q\beta, q\gamma\in \N$ and $q\geq m_-$, with $m_-$ as in Lemma \ref{lemma:sectionsink}. Using again Lemma \ref{lemma:sectionsink} we obtain that
	$$\HH^0(Y_-,qD)\simeq \HH^0(X,q(\beta+\gamma)L)_{q(\beta a_i+\gamma a_{i+1})},$$
	and since $q\beta a_i+q\gamma a_{i+1}\in (q(\beta+\gamma)a_i,q(\beta+\gamma)a_{i+1})$, using Remark \ref{remark:projquotients} and the above isomorphism it holds that $$\Proj R(Y_-;qD)\simeq \Proj \bigoplus_{m\geq 0 }\HH^0(X,mq(\beta+\gamma)L)_{mq(\beta a_i+\gamma a_{i+1})} \simeq  \GX(i,i+1).$$ 
\end{proof}



\section{An example: factorization of the cubo-cubic standard Cremona transformation of $\P^3$}\label{sec:Cremona}

In this section we will illustrate the concepts of pruning and of the geometric realization of an MDB in an example coming from classical algebraic geometry, namely the cubo-cubic standard Cremona involution of $\P^3$. To do so, we will use the language of toric varieties, since the algebraic approach presented in these constructions  can be easily implemented in the toric setting. This approach, which was already used in \cite[Example 6.1]{WORS6}, will be studied in more detail in \cite{BOS}.


Consider the cubo-cubic standard Cremona involution of $\P^3$:

\begin{equation*}
	\begin{aligned}
		T: \P^3 & \dashrightarrow \P^3 \\
		(x:y:z:w) & \mapsto (x^{-1}:y^{-1}:z^{-1}:w^{-1})=(yzw:xzw:xyw:xyz).
	\end{aligned}
\end{equation*}

Let $W\to \P^3$ be the blow-up of $\P^3$ along the $4$ coordinates points $P_0=(1:0:0:0),\ldots,P_3=(0:0:0:1)$, and let $E_i$, with $i=0,\ldots,3$, be the associated exceptional divisors. Then the map $T$ is given by the linear system of cubics which are singular along $P_0,\ldots,P_3$, that is $|3H-2E_0-2E_1-2E_2-2E_3|$ (cf. \cite[\S 2]{LafaceUgagliaP3}), where $H$ denotes the pullback into $W$  of the hyperplane section in $\P^3$.

As explained in \cite[Example 5.1.6]{DolgachevCremona}, the birational map $T$ admits a factorization
\begin{equation*}\label{eq:Cremonafact} 
\xymatrix@C=15mm{\P^3\ar@{-->}[r]^{\psi_1}&W \ar@{-->}[r]^{\psi_2} &\widetilde{W} \ar[r]^{\psi_3} &\P^3}
\end{equation*}
where 
\begin{itemize}
	\item $\psi_1: \P^3 \dashrightarrow W$ is the inverse of the blow-up of $\P^3$ along the four coordinate points $P_0,P_1,P_2,P_3$; 
	\item $\psi_2 : W\dashrightarrow \widetilde{W}$ is the  flip of the strict transform of the six lines joining the coordinate points;
	\item $\psi_3: \widetilde{W} \to \P^3$ is the blow-up of $\P^3$ along the four coordinate points.
\end{itemize}
Since all these varieties are toric, we may have a better understanding of the situation by looking at the associated moment polytopes (Figure \ref{fig:cremonafact}).

\begin{figure}[htp]
	\centering
	\hfill
	\includegraphics[scale=0.06]{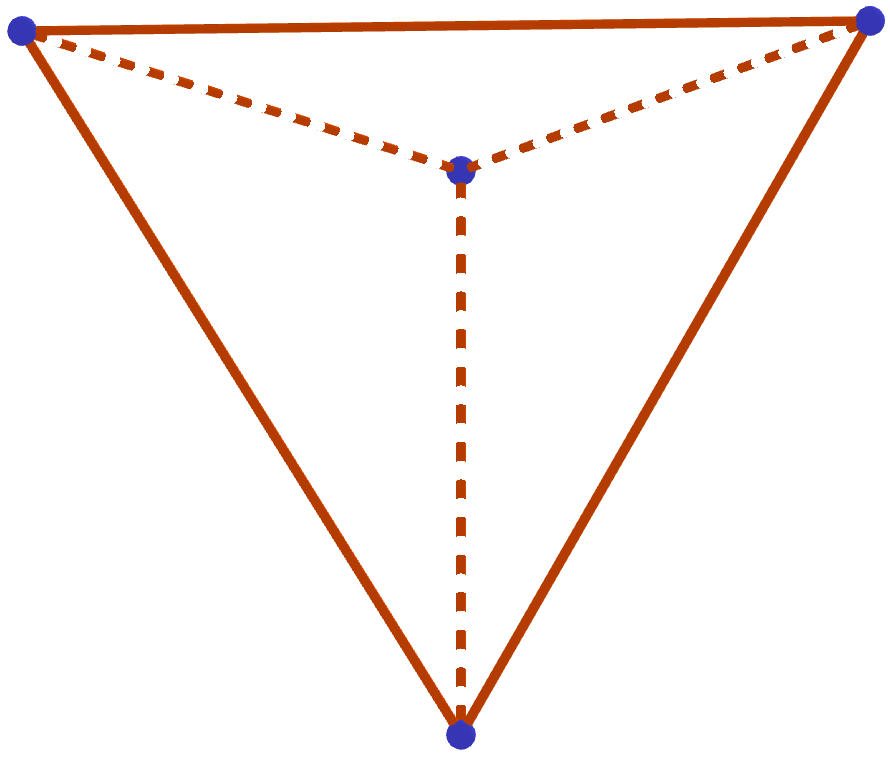}\hfill 
	\includegraphics[scale=0.06]{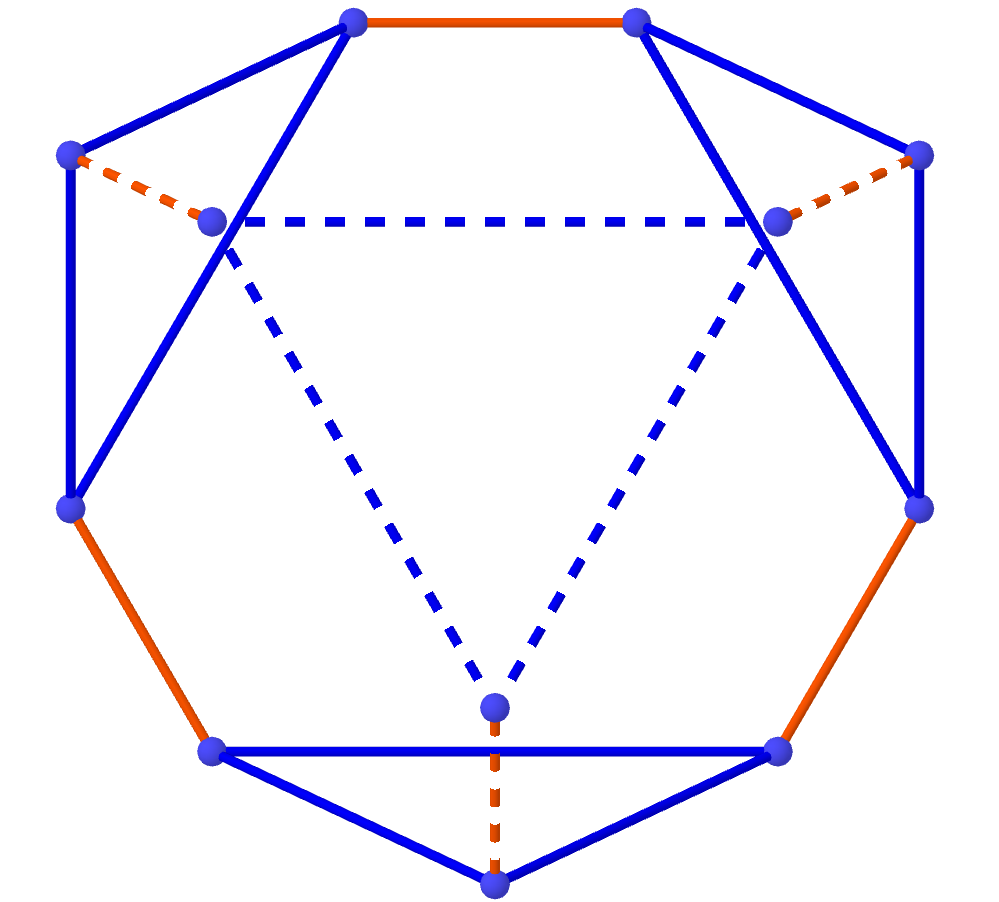}\hfill 
	\includegraphics[scale=0.06]{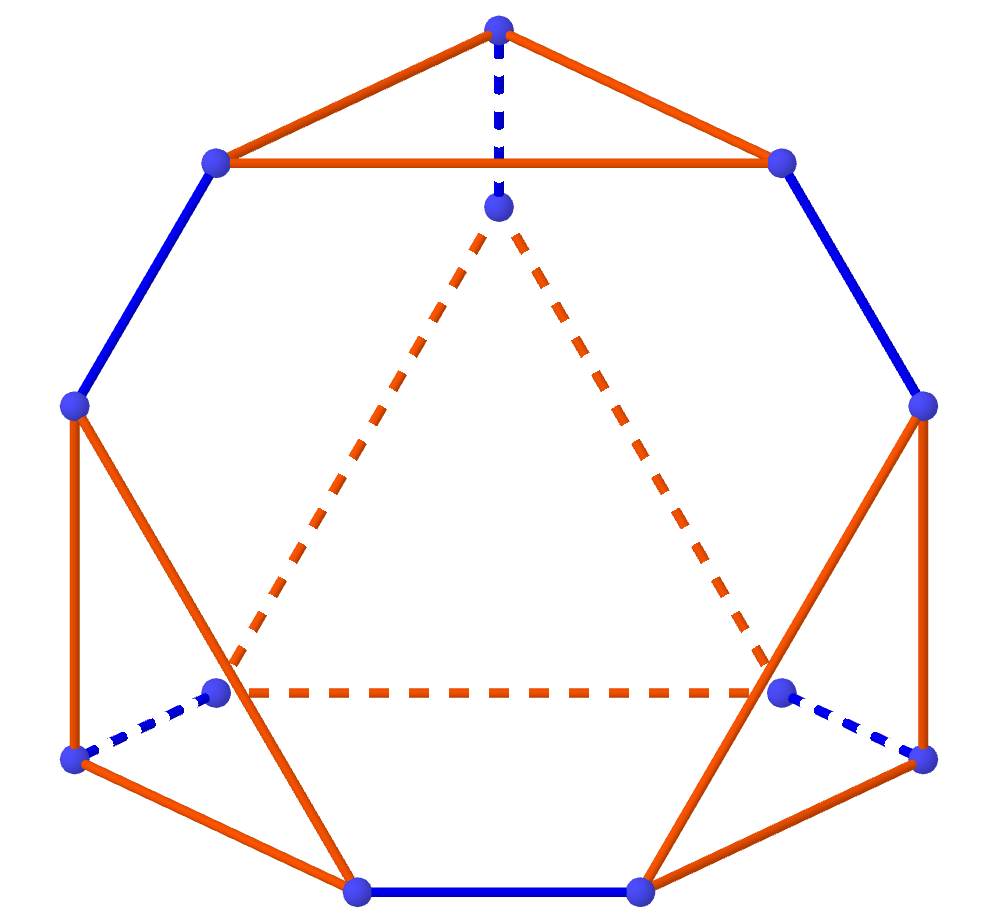}\hfill 
	\includegraphics[scale=0.06]{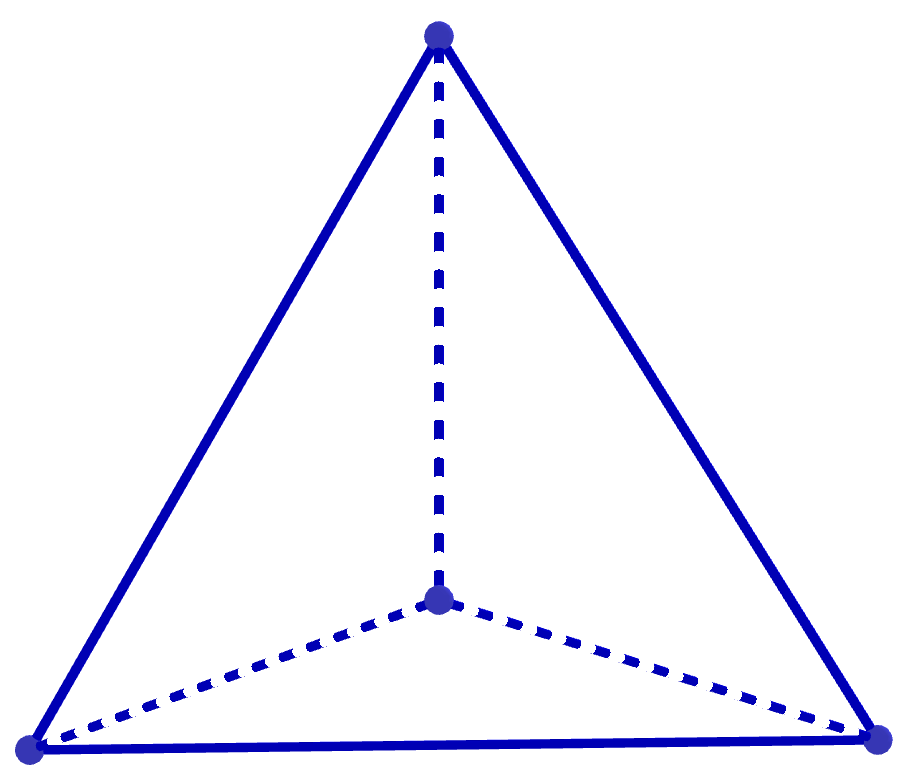}\hfill 
	\hfill
	\caption{\label{fig:cremonafact}The polytopes  associated respectively to $\P^3,W,\widetilde{W}$ and $\P^3$.} 
\end{figure}

A natural geometric realization of the map $T$ is given by $X=\P^1\times \P^1\times \P^1\times \P^1$, together with the diagonal action of $\C^*$ defined as:
$$t\big((x_0\!:\! x_1),(y_0\!:\!y_1),(z_0\!:\!z_1),(w_0\!:\!w_1)\big)=\big((x_0\!:\!tx_1),(y_0\!:\!ty_1),(z_0\!:\!tz_1),(w_0\!:\!tw_1)\big),$$
whose sink and source are respectively $y_-=(\infty,\infty,\infty,\infty)$ and $y_+=(0,0,0,0)$, where we set $\infty:=(0:1)\in \P^1$, $0:=(1:0)\in \P^1$. In fact $T$ is precisely the induced birational map from $\P(T_{X,y_-})\dashrightarrow \P(T_{X,y_+})$ that sends a (general) tangent direction $x_-\in \P(T_{X,y_-})$ to $x_+\in \P(T_{X,y_+})$ if there exists an orbit with sink and source at $y_\pm$, and having tangent directions $x_\pm$ at $y_\pm$. 

Different polarizations of $X$ will give rise to different possible prunings of the $\C^*$-action and, subsequently, will produce different factorizations of the natural birational map $\psi$. In this case, if we choose the polarization given by the ample line bundle $L:=\cO_{X}(1,1,1,1)$, then the action on $(X,L)$ has bandwidth and criticality equal to four and, without loss of generality, we may assume that the weights with respect to $L$ on the fixed point components of the actions are $0,1,2,3,4$. Moreover, it has four geometric quotients 
$$\GX(0,1)\simeq \P^3,\quad \GX(1,2)\simeq W,\quad \GX(2,3)\simeq \widetilde{W},\quad \GX(3,4)\simeq \P^3,$$ which are the toric varieties described above. We represent this situation in the following picture, where between every two consecutive weights  $i,i+1$ we have written the corresponding geometric quotient $\GX(i,i+1)$, for $i=0,\ldots,3$.

\begin{figure}[h!]
	\centering
	\includegraphics[scale=0.5]{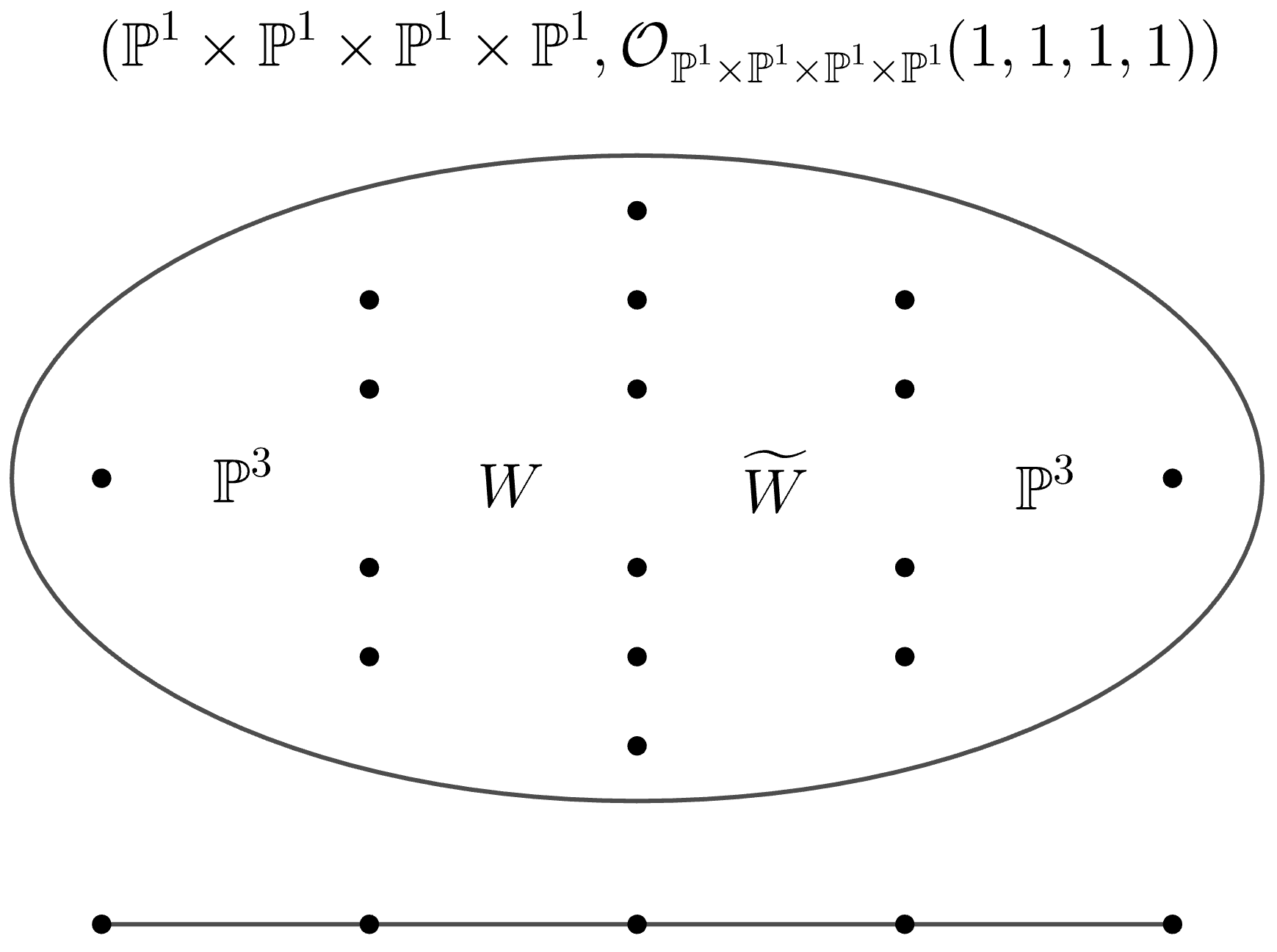}\hfill 
\end{figure}

We may then consider different prunings of the $\C^*$-action on $(X,L)$; for instance:

\begin{itemize}
\item the induced $\C^*$-action on the pruning $X_1$ of $X$ with respect to $\rho_-=1/2$, $\rho_+=7/2$ is of B-type, with sink and source isomorphic to $\P^3$; it has criticality $4$ and it is not a bordism;
\item the induced $\C^*$-action on the pruning $X_2$ of $X$ with respect to $\rho_-=3/2$, $\rho_+=5/2$ is a bordism of criticality $2$, with sink and source isomorphic to $W$ and $\widetilde{W}$, respectively; the associated birational map is the SQM $\psi_2$;
\item the prunings of $X$ satisfying that $[\rho_-,\rho_+]\cap\Z=\emptyset$ are decomposable $\P^1$-bundles over one of the geometric quotients of $X$; for instance, the pruning with respect to $\rho_-=3/2$, $\rho_+=5/3$ is a decomposable $\P^1$-bundle over $W$.
\end{itemize}

Let us also illustrate, in the context of our example, the construction of bordisms out of MDBs that we have discussed in Section \ref{sec:MDR2}. 

Let us consider the SQM $\psi_2:W\dashrightarrow \widetilde{W}$ defined above. The varieties $W$ and $\widetilde{W}$ are smooth 
projective toric varieties, hence they are Mori dream spaces (cf. \cite[Cor. 2.4]{HuKeel}), and in particular any choice of pair of effective Cartier divisors gives a MDB (cf. \cite[Lemma 2.7]{Castravet}). 

So we identify $\NU(W)$ with $\NU(\tl{W})$ via $\psi_2$, and start the construction by considering  two line bundles $L,\tl{L}\in \NU(W)$ which are respectively ample on $W,\tl{W}$. 
Set $P:=\P_W(L\oplus \tl{L})$: then, following the proof of Lemma \ref{lemma:PbirationaXalpha},
$$
X(L,\tl{L}):=\Proj\big(R(P;\cO_P(1))\big) 
$$ 
is a geometric realization of the map $\psi_2$. In this way we may construct many geometric realizations of $\psi_2$, different, in general, of the variety $X_2$ defined previously; we obtain $X_2$ by choosing:
$$
L=\cO_W(5H-2E_0-2E_1-2E_2-2E_3),\quad\tl{L}=\cO_W(7H-4E_0-4E_1-4E_2-4E_3).
$$

Finally, we note that, in the context of our example, the fact that $W$, $\tl{W}$ and the map $\psi_2$ are toric allows us to use techniques of toric geometry to construct $P$ and, subsequently, $X(L,\tl{L})$; explicit details on how these toric constructions work will be presented in the forthcoming paper \cite{BOS}. By means of the mathematical software {\tt SageMath} (cf. \cite{sagemath}), we have constructed a moment polytope for $X(L,\tl{L})$; it is the $4$-dimensional polytope in $\R^4$ whose vertices are determined by the columns of the matrix below:
\setcounter{MaxMatrixCols}{30}
\[\setlength{\arraycolsep}{2.5pt}
\renewcommand\arraystretch{1}
\hfsetfillcolor{gray!10}
\hfsetbordercolor{gray}
\footnotesize{\begin{pmatrix}
		0 & 0 & 0 & 0 & 0 & 0 & 4 & 4 & 4 & 6 & 6 & 6 & 0 & 0 & 0 & 6 & 6 & 6 & 0 & 0 & 0 & 2 & 2 & 2 & 6 & 6 & 6 & 6 & 6 & 6  \\
		6 & 4 & 0 & 4 & 6 & 0 & 0 & 6 & 0 & 0 & 4 & 0 & 6 & 0 & 6 & 0 & 6 & 0 & 6 & 6 & 2 & 0 & 6 & 6 & 0& 6 & 2 & 6 & 0 & 2  \\
		0 & 0 & 6 & 6 & 4 & 4 & 0 & 0 & 6 & 0 & 0 & 4 & 0 & 6 & 6 & 6 & 0 & 0 & 6 & 2 & 6 & 6 & 6 & 0 & 6& 2 & 6 & 0 & 2 & 0  \\
		0 & 0 & 0 & 0 & 0 & 0 & 0 & 0 & 0 & 0 & 0 & 0 & 2 & 2 & 2 & 2 & 2 & 2 & 6 & 6 & 6 & 6 & 6 & 6 & 6 & 6 & 6 & 6 & 6 & 6 \\
\end{pmatrix}}\]
The facets determined by the vertices whose $4$-th coordinates are equal to $0$ and $6$ are respectively the moment polytopes of the varieties $W$, $\tl{W}$. 
Moreover, consider the $\C^*$-action  associated to the projection onto the last coordinate.  With respect to the polarization determined by the polytope, the action on $X(L,\tl{L})$ has criticality $2$, with sink and source respectively isomorphic to $W,\widetilde{W}$, and six inner fixed points all of  weight $2$, which correspond to the six lines that we flip in the SQM $\psi_2:W\dashrightarrow \tl{W}$.

\bibliographystyle{plain}
\bibliography{bibliomin}
\end{document}